\documentclass[12pt]{amsart}

\usepackage{graphicx} 
\usepackage{amsmath, amsfonts, amsthm, amssymb, color, enumerate, extarrows, cite}
\usepackage{mathrsfs, xy,  graphicx, paralist, fancyvrb, makecell, float}
\usepackage{tabularx}
\usepackage{caption}
\usepackage{longtable}
\usepackage{pdflscape}
\usepackage{array}
\usepackage{comment}
\usepackage{tikz}               
\usepackage{tikz-cd}            
\usepackage[a4paper, margin=0.8in]{geometry} 
\usepackage[dvipsnames]{xcolor}
\usepackage[nobysame, alphabetic]{amsrefs}
\usepackage[bookmarks, colorlinks=true, linkcolor=blue, citecolor=blue, urlcolor=blue]{hyperref}
\usepackage{booktabs}
\usepackage{longtable}
\usepackage{array}
\usepackage{ragged2e}

\newcolumntype{L}[1]{>{\RaggedRight\arraybackslash}p{#1}}

\input xy
\xyoption{all}

\usepackage{enumerate, amsmath, amsthm, amsfonts, amssymb, xy,  mathrsfs, graphicx, paralist, fancyvrb}
\usepackage[usenames, dvipsnames]{xcolor}
\usepackage[margin=0.8in]{geometry} 
\usepackage[bookmarks, colorlinks=true, linkcolor=blue, citecolor=blue, urlcolor=blue]{hyperref}

\input xy
\xyoption{all}

\numberwithin{equation}{section}
\newtheorem{theorem}[equation]{Theorem}

\newtheorem{proposition}[equation]{Proposition}
\newtheorem{lemma}[equation]{Lemma}
\newtheorem{corollary}[equation]{Corollary}

\theoremstyle{definition}
\newtheorem{rmk}[equation]{Remark}
\newenvironment{remark}[1][]{\begin{rmk}[#1]\pushQED{\qed}}{\popQED \end{rmk}}
\newtheorem{eg}[equation]{Example}
\newenvironment{example}[1][]{\begin{eg}[#1] \pushQED{\qed}}{\popQED \end{eg}}
\newtheorem{defn}[equation]{Definition}
\newenvironment{definition}[1][]{\begin{defn}[#1]\pushQED{\qed}}{\popQED \end{defn}}

\newenvironment{subeqns}[1][]{\addtocounter{equation}{-1}
\begin{subequations}

}{\end{subequations}}

\makeatletter
\@addtoreset{equation}{section}
\renewcommand{\thesubsection}{%
  \ifnum\c@subsection<1 \@arabic\c@section
  \else \thesection.\@arabic\c@subsection
  \fi
}
\makeatother

\newcommand{\C}{\mathbb{C}}

\newcommand{\cE}{\mathcal{E}}

\newcommand{\rE}{\mathrm{E}}

\newcommand{\cF}{\mathcal{F}}

\newcommand{\sF}{\mathscr{F}}

\newcommand{\rH}{\mathrm{H}}

\newcommand{\cJ}{\mathcal{J}}

\newcommand{\bK}{\mathbf{K}}
\newcommand{\cK}{\mathcal{K}}

\newcommand{\cL}{\mathcal{L}}

\newcommand{\cO}{\mathcal{O}}

\newcommand{\cP}{\mathcal{P}}

\newcommand{\cQ}{\mathcal{Q}}

\newcommand{\cR}{\mathcal{R}}

\newcommand{\bS}{\mathbf{S}}
\newcommand{\cS}{\mathcal{S}}

\newcommand{\cT}{\mathcal{T}}

\newcommand{\ba}{\mathbf{a}}

\newcommand{\fb}{\mathfrak{b}}

\newcommand{\fg}{\mathfrak{g}}

\newcommand{\fh}{\mathfrak{h}}

\newcommand{\bn}{\mathbf{n}}
\newcommand{\fn}{\mathfrak{n}}

\newcommand{\fp}{\mathfrak{p}}

\newcommand{\bt}{\mathbf{t}}

\newcommand{\fu}{\mathfrak{u}}

\renewcommand{\phi}{\varphi}
\renewcommand{\emptyset}{\varnothing}

\renewcommand{\tilde}[1]{\widetilde{#1}}
\newcommand{\ol}[1]{\overline{#1}}

\makeatletter
\def\Ddots{\mathinner{\mkern1mu\raise\p@
\vbox{\kern7\p@\hbox{.}}\mkern2mu
\raise4\p@\hbox{.}\mkern2mu\raise7\p@\hbox{.}\mkern1mu}}
\makeatother

\DeclareMathOperator{\coker}{coker}
\renewcommand{\hom}{\operatorname{Hom}}

\DeclareMathOperator{\rank}{rank}

\DeclareMathOperator{\Tor}{Tor}

\DeclareMathOperator{\Spec}{Spec}

\newcommand{\GL}{\mathbf{GL}}

\newcommand{\fgl}{\mathfrak{gl}}

\newcommand{\btau}{\boldsymbol{\tau}}

\newcommand{\defi}[1]{{\bf \textsf{#1}}}

\theoremstyle{plain}
\newtheorem*{theorem*}{Theorem}
\newtheorem{lem}[equation]{Lemma}
\newtheorem{claim}[equation]{Claim}
\newtheorem*{notation*}{Notation}

\newcommand{\Ber}{\mathcal{B}\mathrm{er}}
\newcommand{\sHom}{\mathcal{H}\mathrm{om}}
\newcommand{\bos}{{\rm bos}}

\title{Cohomology of Complex Supertori}
\date{May 29, 2026}

\author[H. Bhatia]{Hargun Bhatia}
\address{Department of Mathematics, University of California San Diego, La Jolla, CA, USA}
\email{hpbhatia@ucsd.edu}

\author[S. Ganguly]{Soumya Ganguly}
\address{Department of Mathematics,
Rutgers University, New Brunswick, NJ, USA}
\email{soumya.ganguly@rutgers.edu}

\author[Z. Jiang]{Zhiyuan Jiang}
\address{Department of Mathematics, Purdue University, West Lafayette, IN, USA}
\email{jian1206@purdue.edu}

\author[J. Rabin]{Jeffrey M Rabin}
\address{Department of Mathematics, University of California San Diego, La Jolla, CA, USA}
\email{jrabin@ucsd.edu}

\author[S. Sam]{Steven V Sam}
\address{Department of Mathematics, University of California San Diego, La Jolla, CA, USA}
\email{ssam@ucsd.edu}

\begin{document}

\begin{abstract}
We consider supertori which are quotients of affine superspace by translations by algebraically independent odd parameters. Specifically, we describe the ring structure of its space of global sections by generators and relations and completely determine the coherent cohomology groups of its structure sheaf by reducing it to a problem of Lie algebra cohomology. We also show that Poincar\'e duality on sheaf cohomology is compatible with that of the group cohomology of the translation group and give explicit tables of examples. Finally, we compute the Picard groups of these supertori.
\end{abstract}

\maketitle

\tableofcontents

\section{Introduction}

A basic fact in algebraic / complex geometry is that globally defined regular functions on projective varieties are necessarily constant. This can fail in the generalization to super algebraic geometry, and our initial goal in this paper is to study this failure for super analogues of complex tori.

By a complex supertorus we mean a quotient supermanifold of the type $M=\C^{m|n}/G$, where $G \cong {\mathbb Z}^{2m}$ is generated by $2m$ commuting translations on the affine superspace $\C^{m|n}$ such that the underlying space (bosonic reduction) $M_{\bos}$ is a complex $m$-torus. More precisely, we allow for families of such objects over a finitely generated Grassmann algebra $\Lambda$, providing ``constant'' odd parameters $\alpha_{ij}$ by which the odd coordinates can be translated in the $G$-action. 
Supertori are simple and computable examples of complex supermanifolds, and they provide possible models for the Jacobians or Picard groups of super Riemann surfaces, although these objects are not yet fully understood \cite{BergveltRabin1999}.

More generally, it is interesting to consider all of the cohomology groups of the structure sheaf $\rH^\bullet(M, \cO_M)$. This is an algebra over $\rH^0(M,\cO_M)$, and its structure was completely determined when $m=1$ in \cite{RabinRhoadesKim2023}. Our aim is to extend this to general $m$. One of our main results is a complete determination of the dimension of this cohomology (in fact, it has the structure of a representation which we discuss below and we completely determine its irreducible decomposition) and of the ring structure of $\rH^0(M,\cO_M)$ for all $m$. In particular, the first cohomology group $\rH^1(M,\cO_M)$ is of great interest because it gives the Picard group of the supertorus. As an application of our main theorems, in Section~\ref{Section:Picard}, we compute the even Picard group $\operatorname{Pic}_{\rm ev}^0(M)$ in terms of explicit constraints and relations.

By some generalities in homological algebra, we can translate the problem into one of group cohomology. To fix notation, let $z_1,\dots,z_m$ and $\theta_1,\dots,\theta_n$ denote the even and odd coordinates on $\C^{m|n}$. Let $E$ be the exterior algebra with generators
\[
\{\alpha_{ij} \mid 1 \le i \le m,\ 1 \le j \le n\} \cup \{\theta_1,\dots,\theta_n\}.
\]
Then for all $p$, we have an identification
\[
  \rH^p(M,\cO_M) \cong \rH^p(G, N \otimes E),
\]
where $N = \cO_{\C^m}(\C^m)$ denotes the ring of globally defined regular functions on $\C^m$. Here, we can either interpret this algebraically, in which case $N$ is a polynomial ring in $m$ variables, or we can interpret this analytically, in which case $N$ is the ring of globally defined holomorphic functions in $m$ variables. This does not affect the final answer, but the complexity of the next step changes dramatically based on which interpretation is taken.

Since $G$ is a free abelian group, the right side amounts to computing the homology of a Koszul complex, but we are unable to do this computation directly. In degree zero, the cocycles (global functions) are independent of the even coordinates, essentially because the underlying space has only constant global functions. In Sections~\ref{ss:group-coh} and \ref{ss: fourier_proof}, we show that the same is true for all the $\rH^p(M,\cO_M)$. More precisely, there is a subgroup $\cT$ of $G$ generated by $T_1,\dots,T_m$ (acting on $E$ by $T_i(\theta_j) = \theta_j + \alpha_{ij}$ and $T_i(\alpha_{k\ell}) = \alpha_{k\ell}$) such that 
\[
  \rH^p(G,N \otimes E) \cong \rH^p(\cT,E).
\]

Thus the cohomology is essentially algebraic, coming from the odd translations in $G$. For example, a single odd translation $\theta \mapsto \theta + \alpha$ admits an invariant function $\theta \alpha$.

In the algebraic category, this reduction is fairly simple and essentially amounts to the fact that any polynomial satisfying $f(z_1,\dots,z_m) = f(z_1+c_1,\dots,z_m+c_m)$, for all integers $c_i$, must be constant. On the other hand, there are plenty of non-constant holomorphic functions that satisfy this condition, and consequently, our proof for the analytic setting must deal with this issue. By working with Fourier series expansions of such periodic holomorphic functions, we show instead that the group cohomology of $\cT$ with coefficients in the space of non-constant periodic holomorphic functions vanishes. 

This simplifies the Koszul complex substantially, but we can make one further identification. Taking inspiration from \cite{RabinRhoadesKim2023}, the operators $T_i-1$ on $E$ can be identified with the exponentials of commuting nilpotent operators
\[
  X_{i,m+1} = \sum_{k=1}^n \alpha_{ik} \frac{\partial}{\partial \theta_k}.
\]
Hence we have a third Koszul complex using the $X_{i,m+1}$, and in Section \ref{ss:lie-coh}, we show that it is isomorphic to the Koszul complex used to compute $\rH^\bullet(\cT,E)$. A crucial observation is that the action of the operators $X_{i,m+1}$ on $E$ can be extended to an action of the reductive Lie algebra $\fg =\fgl(m+1) \times \fgl(n)$. This allows us to employ the Borel--Weil--Bott theorem (specifically, Kostant's variation for Lie algebra cohomology) to completely determine the groups $\rH^\bullet(M,\cO_M)$  in Section~\ref{ss:irred_decomp} as representations of the subalgebra $\fgl(m) \times \fgl(n)$ (there is no action of $\fg$ on the cohomology).

\begin{theorem}\label{thm:sheafcohschur}
  Fix $0\leq p\leq m$. For a partition $\lambda =(\lambda_1,\ldots,\lambda_{m+1})$, we define
\[ 
    w_p\lambda = (\lambda_1, \dots, \lambda_{m-p}, \underbrace{\lambda_{m-p+2} - 1, \dots, \lambda_{m+1}-1}_{\text{$p$ terms}}, \lambda_{m-p+1} + p).
\]
There is an isomorphism of $\fgl(m) \times \fgl(n)$-representations
\begin{align*}
  \rH^p(M,\cO_M) \cong \bigoplus_\lambda \bS_{((w_p\lambda)_1,\dots,(w_p\lambda)_m)}(\C^m) \otimes \bS_{\lambda^T}(\C^n)
\end{align*}
where $\bS_\lambda$ is the Schur functor of the partition $\lambda$, the sum is over all partitions $\lambda = (\lambda_1,\dots,\lambda_{m+1})$ with $\lambda_1 \le n$, and $\lambda^T$ is the conjugate partition of $\lambda$. The cohomology vanishes in degrees $>p$.
\end{theorem}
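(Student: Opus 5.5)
Taking the reductions described in the introduction as given, we have $\rH^p(M,\cO_M)\cong\rH^p(G,N\otimes E)\cong\rH^p(\cT,E)$. The last group is the cohomology of the Koszul complex of the commuting nilpotent operators $X_{1,m+1},\dots,X_{m,m+1}$ acting on $E$. The plan is to read this Koszul complex as Lie algebra cohomology and then apply Kostant's theorem.

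\textbf{Identifying the module.} Let $\fu\subset\fgl(m+1)$ be the abelian nilradical spanned by $X_{i,m+1}$ for $1\le i\le m$, so $\fu\cong\C^m$. The Levi factor of the corresponding parabolic is $\fgl(m)\times\fgl(1)$. The Koszul complex above is then the Chevalley--Eilenberg complex computing $\rH^\bullet(\fu,E)$.

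Next we identify $E$ as a $\fgl(m+1)\times\fgl(n)$-module. Put $\theta_j=\alpha_{m+1,j}$, so that $E=\bigwedge(\C^{m+1}\otimes\C^n)$. The operators $X_{ik}=\sum_j\alpha_{ij}\partial/\partial\alpha_{kj}$ realize the standard action of $\fgl(m+1)$, and $\fgl(n)$ acts similarly on the second index. Skew Cauchy gives
\[
E\cong\bigoplus_\lambda \bS_\lambda(\C^{m+1})\otimes\bS_{\lambda^T}(\C^n),
\]
where $\lambda$ has at most $m+1$ rows and $\lambda_1\le n$. Since $\fu$ acts only on the first factor, we get
\[
\rH^p(\fu,E)\cong\bigoplus_\lambda \rH^p(\fu,\bS_\lambda\C^{m+1})\otimes\bS_{\lambda^T}\C^n.
\]

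\textbf{Applying Kostant.} Kostant's theorem gives
\[
\rH^p(\fu,V_\lambda)=\bigoplus_{w\in W^P,\ \ell(w)=p} L_{w\cdot\lambda},
\]
where $W^P$ is the set of minimal coset representatives for $S_m\backslash S_{m+1}$ (with the appropriate convention for cohomology versus homology). There is exactly one such $w$ of each length $p$ with $0\le p\le m$: it moves the entry $m-p+1$ to the last slot. This already shows that nothing occurs in degrees above $m$, and a single irreducible $\fgl(m)\times\fgl(1)$-module appears for each $\lambda$ and $p$.

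We then compute $w\cdot\lambda=w(\lambda+\rho)-\rho$ with $\rho=(m,m-1,\dots,0)$.
\begin{itemize}
\item The last coordinate becomes $\lambda_{m-p+1}+(p)$.
\item Each entry $\lambda_{m-p+1+k}$, for $1\le k\le p$, shifts one slot left and drops by $1$.
\end{itemize}
This is exactly $w_p\lambda$. Restricting to $\fgl(m)$ then gives $\bS_{((w_p\lambda)_1,\dots,(w_p\lambda)_m)}(\C^m)$. The statement only concerns $\fgl(m)$, so the $\fgl(1)$-weight can be ignored.

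\textbf{Main obstacle.} The delicate point is convention bookkeeping.
\begin{itemize}
\item The Koszul complex of $T_i-1$ and that of $X_{i,m+1}$ must be matched. This is done by filtering, or by writing $T_i=\exp(X_{i,m+1})$ and observing that $(T_i-1)=X_{i,m+1}\cdot(\text{invertible, commuting})$, which gives an isomorphism of Koszul complexes.
\item It must be settled whether $\fu$ is the upper or lower nilradical relative to the chosen Borel. This decides whether Kostant yields $w_p\lambda$ or a dual version.
\item One must check that the Koszul complex is the cohomological one, i.e.\ uses $\hom(\bigwedge^\bullet\fu,E)$ rather than $\bigwedge^\bullet\fu\otimes E$.
\end{itemize}
I would fix these by testing the case $m=1$, $p=0$. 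There $\rH^0$ consists of $\fu$-invariants, namely the highest-weight vectors for the correct Borel, and the answer should be $\lambda$ truncated to its first $m$ parts. This matches $w_0\lambda$ and pins down the conventions.
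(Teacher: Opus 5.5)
Your proposal is correct and follows essentially the same route as the paper. You pass from $\rH^p(\cT,E)$ to $\rH^p(\fn,E)$ using $T_i-1=X_{i,m+1}\phi_i$ with $\phi_i$ invertible and commuting, decompose $E$ by the skew Cauchy identity, and apply Kostant's theorem for the parabolic with Levi $\fgl(m)\times\fgl(1)$; the paper cites the literature for the Weyl-group computation that you do by hand (correctly recovering $w_p\lambda$). Like the paper's proof at that point, you rely on the earlier analytic reduction $\rH^p(M,\cO_M)\cong\rH^p(\cT,E)$, and you are right to read the vanishing clause as ``degrees $>m$''.
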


There exist several explicit formulas for the dimensions of Schur functors, so from this description one can deduce explicit dimensions of the cohomology groups $\rH^p(M,\cO_M)$ for any given values of $m$ and $n$. However, a general formula not involving sums over partitions is unknown to us and we do not explore this combinatorial aspect any further in this paper (but see Remark~\ref{rmk:m=1} for comments on the case $m=1$).

\medskip

Thus far, the identifications of cohomology above are additive only and we have not discussed the multiplicative structure. 
Going back to our first identification of sheaf cohomology with group cohomology, we show in Section~\ref{ss:duality} that, under these isomorphisms, Serre duality for supertori is compatible with Poincar\'e duality for group cohomology.  

\begin{theorem}\label{thm:duality}
    There is a diagram
       \[
        \begin{tikzcd}
            \rH^i(M,\mathcal{O}_M) \times \rH^{m-i}(M,\mathcal{O}_M)\arrow{d}\arrow{r} & \C \\
            \rH^i(\mathcal T,E) \times \rH^{m-i}(\mathcal T,E)\arrow{ur}
        \end{tikzcd}
      \]
      which commutes up to a nonzero scalar.
\end{theorem}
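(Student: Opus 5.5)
We plan to make both pairings explicit and compare them on the reduced complexes. On the sheaf side, Serre duality for $M$ is the cup product $\rH^i(M,\cO_M)\times\rH^{m-i}(M,\cO_M)\to\rH^m(M,\cO_M)$ followed by a trace $\rH^m(M,\cO_M)\to\C$ (more precisely $\rH^m(M,\Ber_M)\to\C$, and $\Ber_M$ is trivialized by $[dz_1\cdots dz_m\,d\theta_1\cdots d\theta_n]$, which is $G$-invariant because translations have unit Berezinian). On the group side, $\cT\cong\mathbb Z^m$ is a Poincar\'e duality group of dimension $m$. Its pairing is the cup product $\rH^i(\cT,E)\times\rH^{m-i}(\cT,E)\to\rH^m(\cT,E)$, followed by the map induced by a $\cT$-invariant functional $E\to\C$, namely the Berezin integral $\int d\theta_1\cdots d\theta_n$ composed with extraction of the coefficient of the top monomial in the $\alpha_{ij}$. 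This functional is $\cT$-invariant because $\int$ is translation invariant in $\theta$. Then $\rH^m(\cT,E)\to\rH^m(\cT,\C)\cong\C$ via the fundamental class.

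The first step is to make the identification $\rH^p(M,\cO_M)\cong\rH^p(G,N\otimes E)\cong\rH^p(\cT,E)$ multiplicative. The Cartan--Leray (or \v{C}ech-type) spectral sequence for the free properly discontinuous action of $G$ on $\C^{m|n}$ collapses because $\C^{m|n}$ is Stein, and it is compatible with cup products. The reduction from $G$ to $\cT$ goes through the Hochschild--Serre spectral sequence for $\cT\subset G$ together with the vanishing results of Sections~\ref{ss:group-coh} and \ref{ss: fourier_proof}. Since the Koszul complexes are tensor products of dg-algebras (the exterior algebra on the generators of $G$ tensored with the coefficients), the maps are ring maps, so cup products correspond. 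Using these isomorphisms, it then suffices to prove that the two top-degree linear maps $\rH^m(M,\cO_M)\to\C$ and $\rH^m(\cT,E)\to\C$ agree up to a nonzero scalar.

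Both $\rH^m$ groups are computed by the cokernel of the last Koszul differential. By Theorem~\ref{thm:sheafcohschur}, or directly, these are $\C$-spaces on which the trace is determined by its value on one class. For the comparison I would use the top class of $\rH^m(M,\cO_M)$ that restricts from the bosonic torus, namely $\rH^m(M_{\bos},\cO)\otimes(\text{top odd part})$. Concretely, I would take the class of $\theta_1\cdots\theta_n\prod\alpha_{ij}$ times the generator of $\rH^m(\mathbb Z^{2m},N)$ corresponding to $d\bar z_1\cdots d\bar z_m$. I would check by direct computation that both functionals are nonzero on it: the Serre trace reduces to the classical Serre trace on the complex torus $M_{\bos}$ times the Berezin integral, which is nonzero; the group side gives the fundamental class pairing, also nonzero.

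The hard part is that each functional must vanish on the image of the last Koszul differential, and the kernels must match. Nonzero on one class is not enough unless the top cohomology is one-dimensional. Indeed $\rH^m$ might be larger, as Theorem~\ref{thm:sheafcohschur} with $p=m$ shows. To handle this, I would show that both functionals factor through a common quotient: integration over $\theta$ and projection onto the top $\alpha$-monomial. I would then show that the Serre trace on $\rH^m(G,N\otimes E)$ equals (classical trace on $\rH^m(\mathbb Z^{2m},N)$) $\otimes$ (Berezin integral on $E$), using invariance of the Berezinian form. Identifying the classical torus trace with the $\cT$-fundamental class under the reduction isomorphism is the main obstacle. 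I would first attempt it via the Dolbeault description and the compatibility of $\rH^m(\mathbb Z^{2m},N)\cong\rH^m(\cT,\C)$ with the restriction from $\mathbb Z^{2m}$ to $\cT$.
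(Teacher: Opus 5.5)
Your skeleton matches the paper's proof. You make the chain $\rH^\bullet(M,\cO_M)\cong\rH^\bullet(G,\cO(\C^m)\otimes E)\cong\rH^\bullet(\cT,\rH^0(\cS,\cO(\C^m)\otimes E))\cong\rH^\bullet(\cT,E)$ multiplicative, reduce to the top-degree functionals, and show that both factor through a top-coefficient map ($\gamma$ on $\cO_M$, $\partial_\theta$ on $E$) which the identifications intertwine. However, you have put the difficulty in the wrong place.

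The step you call the main obstacle is empty. Once both functionals factor through the top coefficient, you are comparing nonzero functionals on $\rH^m(M_{\bos},\cO_{M_{\bos}})\cong\rH^m(G,\cO(\C^m))\cong\rH^m(\cT,\rH^0(\cS,\cO(\C^m)))\cong\rH^m(\cT,\C)$. These spaces are all one-dimensional and linked by isomorphisms, so the functionals agree up to a nonzero scalar with no Dolbeault computation. That is exactly how the paper handles its rightmost squares, and it is all the theorem asks. Moreover, the map to use there is inflation along $G\to G/\cS\cong\cT$ (the Hochschild--Serre edge map), not restriction to the subgroup $\cT$. Restriction lands in $\rH^m(\cT,\cO(\C^m))\cong\rH^m(\C^m/\cT,\cO)$, the top cohomology of a non-compact $m$-dimensional complex manifold, which vanishes.

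The step that actually carries content is one you only assert: that the Serre trace of $M$ equals $t_{M_{\bos}}\circ\rH^m(\gamma)$. Translation invariance of the Berezinian form only gives $\Ber_M\cong\cO_M$ (Lemma~\ref{BerTrivial}). The paper gets the factorization (Proposition~\ref{Ber-supertorus} and Lemma~\ref{trace}) from $\Ber_M\cong p^*(\omega_{M_{\bos}}\otimes\Theta^\vee)\cong\sHom_{\cO_{M_{\bos}}}(\cO_M,\omega_{M_{\bos}})$, under which $\gamma$ becomes evaluation at $1$. It then checks that $(\Ber_M,t\circ\rH^m(\tilde{\gamma}))$ is a dualizing pair. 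A Dolbeault argument with fiberwise Berezin integration could replace this, but it has to be carried out.

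Your justification of multiplicativity is also wrong as written. With the naive product on $\bigwedge^\bullet\otimes(\text{coefficients})$, the Koszul differential is not a derivation, since $(g-1)(xy)-((g-1)x)\,y-x\,(g-1)y=(gx-x)(gy-y)$. This is the same reason the paper notes that the isomorphism of Proposition~\ref{prop:koszul-isom} is not multiplicative. Instead, use what the paper uses:
\begin{itemize}
\item Mumford's comparison isomorphism respects cup products;
\item inflation is multiplicative;
\item ring maps of coefficient modules, such as $E\hookrightarrow\rH^0(\cS,\cO(\C^m)\otimes E)$, induce multiplicative maps.
\end{itemize}
Naturality of these same maps in the coefficients then gives the compatibility with $\gamma$ and $\partial_\theta$.
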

This is done by first showing that the trace map of Serre duality for the supertorus $M$ is related to the trace map for the bosonic reduction $M_{\bos}$ via a ``top coefficient" morphism $\gamma \colon \cO_M\rightarrow \cO_{M_{\bos}}$ and then checking that these isomorphisms between sheaf cohomology and group cohomology are compatible with cup products and morphisms induced by $\gamma$.

In Appendix~\ref{appendixB}, we also work out Poincar\'e duality for the case $m=n=2$.

Finally, in Section~\ref{section: gen_rel}, we determine a (minimal) set of generators and relations for $\rH^0(M,\mathcal O_M)$ as a $\C$-algebra. Generators can be found explicitly using the representation structure above by computing highest weight vectors for each of the irreducible representations that appear. Similarly, a set of quadratic relations that they satisfy can also be obtained explicitly by direct computation. Proving that these relations are complete requires significantly more effort, but before jumping into details, we first state our main result.

\begin{theorem}\label{thm: ringgeneratorandrelations}
Let $M$ be a supertorus of dimension $m|n$. Then
\begin{itemize}
    \item[(i)] $\rH^0(M,\mathcal O_M)$ is generated as a $\C$-algebra by $\{\alpha_{ij}\}$ and $\{q_N\}$, where 
    \[
        q_N = \nu_1!\dots\nu_n! \sum_{\{i_1,\dots,i_{m},j\} = N} \alpha_{1i_1}\dots\alpha_{mi_m}\theta_{j}
    \]
    for a $(m+1)$-multiset $N = \{1^{\nu_1},\dots,n^{\nu_n}\}$ of $\{1,\dots,n\}$.
    \item[(ii)] The following relations amongst the generators of $\rH^0(M,\mathcal{O}_M)$ generate all relations:
    \begin{itemize}
\item For a fixed $1\leq i\leq m$ and a $(m+2)$-multiset $N$ of $\{1, \dots,n\}$, we have
  \[
    \sum_{j \in N}\alpha_{ij}q_{N \setminus \{j\}}=0.
  \]
\item For fixed multisets $N_1,N_2$ of sizes $m$ and $m+2$, respectively, we have
  \[
    \sum_{j\in N_2}q_{N_1\cup \{j\}}q_{N_2\backslash \{j\}}=0.
  \]
\end{itemize}
\end{itemize}
\end{theorem}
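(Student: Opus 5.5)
By the reduction of Sections~\ref{ss:group-coh}--\ref{ss:lie-coh}, I will identify $\rH^0(M,\cO_M)$ with the invariants $E^{\cT}$. These invariants coincide with the joint kernel of the operators $X_{i,m+1}=\sum_k \alpha_{ik}\partial/\partial\theta_k$ on $E$. This kernel is an $\fh$-module, where $\fh=\fgl(m)\times\fgl(n)$; note that $\fgl(m)$ acts on the index $i$ and $\fgl(n)$ acts on $j$. I will view $E$ as $\bigwedge(\C^{m+1}\otimes\C^n)$, with the $(m+1)$-st row given by the $\theta$'s. By Cauchy's formula,
\[
E=\bigoplus_\lambda \bS_\lambda(\C^{m+1})\otimes\bS_{\lambda^T}(\C^n).
\]
The case $p=0$ of Theorem~\ref{thm:sheafcohschur} then says that the invariants are the sum of the $\bS_{(\lambda_1,\dots,\lambda_m)}(\C^m)\otimes \bS_{\lambda^T}(\C^n)$, i.e.\ the highest $\fgl(m)$-components inside each $\bS_\lambda(\C^{m+1})$ under branching. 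Equivalently, $E^\cT$ is the space of lowest-weight vectors for $\fn^-=\mathrm{span}(X_{i,m+1})$. In particular, $E^\cT$ is a subalgebra of $E$, and its degree in the $\theta$'s is governed by $\lambda_{m+1}$.

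For generation (i), I will first check directly that each $q_N$ is killed by every $X_{i,m+1}$. Applying $X_{i,m+1}$ replaces $\theta_j$ by $\alpha_{ij}$, which produces a symmetrization over $N$ of a product $\alpha_{1i_1}\cdots\alpha_{mi_m}\alpha_{ij}$. Since row $i$ then appears twice, the antisymmetry of the $\alpha$'s makes this vanish. The $q_N$ span a copy of $\bS_{(1^m)}$ in the $\C^m$-weights tensored with $\mathrm{Sym}^{m+1}$ in the $\C^n$-weights, i.e.\ the component with $\lambda=(1^{m+1})$. For a general $\lambda$, I will build the highest weight vector of the corresponding component as a product. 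The column structure of $\lambda^T$ decomposes the component into products of full columns (these are $\alpha$-minors, or $q$-type vectors when the column has length $m+1$) and shorter columns (pure $\alpha$-monomials). I will argue that multiplication maps from products of the generating representations surject onto each isotypic piece. Concretely, the Cartan product of highest weight vectors is nonzero in this domain-like setting, and the $\fh$-action preserves the subalgebra generated by $\alpha$'s and $q$'s, so every irreducible summand lies in that subalgebra.

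For (ii), I will verify the relations first. The relation $\sum_j\alpha_{ij}q_{N\setminus j}$ is an invariant of $\theta$-degree one lying in an $\fh$-type that, by the $p=0$ decomposition, does not occur, so it must vanish; the quadratic $q$-relation is handled the same way. Alternatively I will check both by a direct symmetrization computation. For completeness, I will take the polynomial-exterior algebra $R=\bigwedge(\alpha)\otimes\mathrm{Sym}(q_N)/I$, where $I$ is generated by the stated relations; the $q_N$ are even, and the squares vanish from the second relation with $N_1=N_2\setminus\{j,j'\}$ or from nilpotence. I then need to show that $R\to E^\cT$ is injective. The strategy is a character or dimension comparison: bound $R$ from above by exhibiting a spanning set of standard monomials, graded by the number $k$ of $q$'s, which equals the $\theta$-degree. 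The first relation lets us straighten $\alpha\cdot q$ products, and the second is a Plücker/Garnir-type straightening for products of $q$'s, with a symmetric-power flavour. The resulting standard monomials should be indexed by semistandard-type tableaux matching $\sum_\lambda \dim \bS_{(\lambda_1..\lambda_m)}\dim\bS_{\lambda^T}$. I will match these degree by degree in $\theta$ and in $\alpha$.

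I expect this straightening and counting step to be the main obstacle, since it requires a combinatorial bijection between standard monomials and the branching multiplicities. As a fallback, I would use a Gröbner basis argument: choose a term order, show that the given relations form a Gröbner basis by checking S-pairs, and count the standard monomials against the character obtained from Theorem~\ref{thm:sheafcohschur} at $p=0$.
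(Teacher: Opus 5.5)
\textbf{The gap: completeness in (ii).} Completeness is the substantive part of the theorem, and your proposal only describes it as a plan. You never say what the standard monomials are. You never say how the linear and odd Pl\"ucker relations straighten an arbitrary product of $\alpha$'s and $q$'s into them. And you never show why their number in each multidegree equals $\dim\bS_{(\lambda_1,\dots,\lambda_m)}(\C^m)\cdot\dim\bS_{\lambda^T}(\C^n)$. The Gr\"obner fallback likewise fixes no term order and checks no S-pairs, and it would have to work uniformly in $m$ and $n$. Without such a bound, nothing in your sketch excludes further minimal relations: relations of degree $\ge 2$ in the $\alpha$'s, relations of degree $1$ in the $\alpha$'s and $\ge 2$ in the $q$'s, or relations of degree $\ge 3$ in the $q$'s alone.

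The paper gets exactly this bound by a different mechanism:
\begin{itemize}
\item The transpose functor $\Omega$ (fix $U$, vary $V$) turns $\rH^0$ into the commutative ring $\bigoplus_{\ell(\lambda)\le m+1}\bS_{(\lambda_1,\dots,\lambda_m)}U\otimes\bS_\lambda V=\rH^0(X,S^\bullet\eta)$, where $X$ is the Grassmannian of rank $m+1$ quotients of $V$.
\item Rational singularities let Weyman's geometric technique give $\Tor_1=\bigoplus_j\rH^j(X,\bigwedge^{j+1}\xi)$.
\item Borel--Weil--Bott cuts this down to $\rH^1(X,\xi_1\otimes\xi_2)\oplus\rH^1(X,\bigwedge^2\xi_2)$.
\item These two pieces are matched with the linear relations and, using injectivity of $\sigma_{m+1,m+1}$, with the odd Pl\"ucker relations.
\end{itemize}
Some substitute for this vanishing argument is what your plan is missing.

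\textbf{Parity error in $R$.} Each $q_N$ has degree $m+1$ in $E$. So for $m$ even the $q_N$ are odd and must be adjoined exteriorly, not via $\mathrm{Sym}$; otherwise $R\to E$ is not even a ring map. For $m$ odd the $q_N$ are even, but their squares do not vanish. For example, for $m=1$ one has $q_{\{1,2\}}^2=2\alpha_{11}\theta_2\alpha_{12}\theta_1\neq 0$, and the odd Pl\"ucker relation with $N_1=\{1\}$, $N_2=\{1,2,2\}$ only gives $q_{\{1,1\}}q_{\{2,2\}}+2q_{\{1,2\}}^2=0$.

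\textbf{What works.} Your part (i) is the paper's argument. The highest weight vector of each summand of $E^{\cT}$ is the monomial $a_\lambda$, which up to a nonzero scalar is $\prod_{j\le\lambda_{m+1}}q_{\{j^{m+1}\}}\cdot\prod_{i\le m}\prod_{\lambda_{m+1}<j\le\lambda_i}\alpha_{ij}$. Since $E$ is not a domain, it is this explicit factorization, not a Cartan-product principle, that guarantees nonvanishing.

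Your representation-theoretic check that the relations hold is a valid alternative to the paper's monomial cancellation plus evaluation map:
\begin{itemize}
\item The linear relations are the image of an equivariant map out of $U\otimes\bigwedge^mU\otimes S^{m+2}V$. But $S^{m+2}V$ does not occur in $E$ at all, since in the Cauchy decomposition it is paired with $\bigwedge^{m+2}(U\oplus U')=0$.
\item The odd Pl\"ucker relations come from $S^mV\otimes S^{m+2}V$. Its constituents $\bS_{(2m+2-k,k)}V$ with $k\le m$ all differ from $\bS_{(m+1,m+1)}V$, which is the only type in $E^{\cT}$ in $\theta$-degree $2$ and total degree $2m+2$.
\end{itemize}
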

The second set of equations are odd analogues of Pl\"ucker relations (to be made precise shortly).

To show completeness of the relations, we fix $m$ and vary $n$. Then we can think of $\rH^0(M,\cO_M)$ as an algebra in the category of polynomial functors. Crucially, there is a symmetry $\Omega$ of this category that, roughly speaking, transforms anti-commutative algebras into commutative ones. In particular, we get a commutative algebra $A:= \Omega \rH^0(M,\cO_M)$, to which we can apply techniques from algebraic geometry. Specifically, we desingularize the affine variety $\Spec(A)$ by a vector bundle over a Grassmannian (a Kempf collapsing) and show that $\Spec(A)$ has rational singularities.

This puts us in the context of \cite{weyman} and allows us to interpret its Tor groups as sheaf cohomology over this Grassmannian. This computation is complicated in general, but since we are only interested in equations, we only need to understand the first Tor group. We carry out this strategy, which identifies the first Tor group as a representation. Finally, we show that these representations match up with the span of the equations written above. 

\begin{remark}
Under the transformation $\Omega$, the span of the $q_N$ generators is an exterior power, and its elements can be interpreted as the maximal minors of a matrix. Under this interpretation, the second set of equations above translate precisely to Pl\"ucker relations for these minors. In particular, the problem of computing all Tor groups for $A$ contains the computation of all syzygies of Pl\"ucker ideals in general, which is a difficult problem.
\end{remark}

\noindent {\bf Related work.} The sheaf cohomology for the structure sheaf of super analogues of projective space is computed in \cite{MR4544562}. The generalization to Grassmannians is worked out in \cite{superres}, and for the periplectic Grassmannians (these are odd analogues of the isotropic Grassmannians for symplectic and orthogonal groups) in \cite{superres2}. The corresponding problem for super analogues of 2-step partial flag varieties is considered in \cite{abhik}.  The more general problem of computing cohomology of Schur functors of tautological bundles over super Grassmannians is studied in \cite{bwfact}.

A super Riemann surface of genus 1, as studied in \cite{Rabin1995}, is {\it not} a supertorus in our sense, because the group $G$ there acts by superconformal transformations, although the cohomology is the same.

\smallskip
\noindent {\bf Acknowledgments.} We thank Brendon Rhoades for useful discussions. S. Sam was partially supported by NSF grant DMS-2302149.

\section{Background}

\subsection{Supermanifolds}\label{subsec:supgeosuptori}
There are excellent references for the basic material on supergeometry and supermanifolds. To name a few, we have \cites{BergveltRabin1999, Witten2019supmanintegration, ManinGraugefieldtheorycomplexgeo, helein2020introductionsupermanifoldssupersymmetry}. The first reference specifically deals with supercurves in detail. For completeness and to fix notation, we recall the definition of a supermanifold. 

We first recall that a complex \defi{superalgebra} is a complex algebra $R=R_0 \oplus R_1$ such that $R_i \cdot R_j \subseteq R_{i+j}$ (where the indices are understood to be elements of $\mathbb{Z}/2$). Elements of $R_0$ are called \defi{even}, elements of $R_1$ are called \defi{odd}; an element $x$ is \defi{homogeneous} if it is either even or odd, and we let $|x| \in \mathbb{Z}/2$ denote its parity. We say that $R$ is \defi{supercommutative} if, for all homogeneous elements $x,y$, we have $xy = (-1)^{|x||y|} yx$.

Let $\Lambda$ be a Grassmann algebra over $\C$.
One may write $(\ast,\Lambda)$ for the superscheme $\mathrm{Spec}(\Lambda)$, whose underlying topological space is a single point~$\ast$.  

A \defi{complex supermanifold} of dimension $m|n$ over $\Lambda$ is a pair $(M,\mathcal{O}_M)$ consisting of a topological space $M$ and a sheaf $\mathcal{O}_M$ of supercommutative $\Lambda$-algebras on $M$, together with a morphism
\[
(M,\mathcal{O}_M)\;\to\;(\ast,\Lambda),
\]
subject to the following conditions:  
\begin{enumerate}
    \item The bosonic reduction $M_{\bos}:=(M,\mathcal{O}_{M_{\bos}})$ is a complex manifold of complex dimension $m$. Here 
\[
\mathcal{O}_{M_{\bos}} := \mathcal{O}_M/\mathcal{J}_M,
\]
where $\mathcal{J}_M$ is the ideal sheaf generated by the odd elements.  
    \item There exists an open covering $\{U_\alpha\}$ of $M$ and, on each $U_\alpha$, odd sections $\theta^\alpha_1,\dots,\theta^\alpha_n\in \mathcal{O}_M(U_\alpha)$ which are linearly independent over $\mathcal{O}_{M_{\bos}}(U_\alpha)$ and generate an exterior algebra, i.e.,
\[
\mathcal{O}_M(U_\alpha)\;\cong\; \mathcal{O}_{M_{\bos}}(U_\alpha)\otimes_\C\Lambda[\theta^\alpha_1,\dots,\theta^\alpha_n].
\]
Each $U_\alpha$ is called a \defi{coordinate chart}, and coordinates on it are denoted by
\[
(z^\alpha_1,\dots,z^\alpha_m \mid \theta^\alpha_1,\dots,\theta^\alpha_n),
\]
where $(z^\alpha_1,\dots,z^\alpha_m)$ are ordinary holomorphic coordinates on the reduced space.  
\end{enumerate}
In this setup, $m$ is called the \defi{even (Bosonic) dimension}, and $n$ is called the \defi{odd (Fermionic) dimension}.

Let $M$ be a supermanifold with odd dimension $n$. We define the \defi{fermionic sheaf} of $M$ to be the $\cO_{M_{\bos}}$-module 
\[
\cF_M = \mathcal J_M/\mathcal J_M^2.
\]
This is a locally free sheaf of rank $n$.
We also define the line bundle $\Theta = \det(\cF_M)$. A supermanifold $M$ is said to be \defi{projected} if there exists a morphism $p \colon M\to M_{\bos}$ such that $p\circ i = {\rm id}_{M_{\bos}}$ where $i \colon M_{\bos}\to M$ is the bosonic reduction.

On overlaps $U_\alpha\cap U_\beta$, the coordinates transform according to
\[
z^\beta_i \;=\; F^\beta_{\alpha,i}(z^\alpha,\theta^\alpha), \qquad 
\theta^\beta_j \;=\; \Psi^\beta_{\alpha,j}(z^\alpha,\theta^\alpha),
\]
where the $F^\beta_{\alpha,i}$ are even holomorphic functions and the $\Psi^\beta_{\alpha,j}$ are odd ones.  

In this work, we are specifically interested in supermanifolds known as  `complex supertori'. These are the super analogues of the standard complex tori from complex geometry (more of this standard theory can be found in \cite{GrifiththsHarrisAlgebraicgeo1978, Mumfordabelianvarieties1970}).

\subsection{The Berezinian}

In this section, we discuss the Berezinian sheaf for complex supermanifolds. Let $M$ be a complex supermanifold of dimension $m|n$, and let $\{(U_\alpha; z_1,\dots,z_m \mid \theta_1,\dots,\theta_n)\}$ be an atlas. 

In a coordinate chart,  the integral of a function is defined as the ordinary integral of the coefficient of $\theta_1\cdots\theta_n$. The Berezinian is the line bundle making this local definition invariant under coordinate changes, see \cite[Definition 4.1]{NojaForms}.

\begin{definition}
     The \defi{Berezinian} sheaf $\Ber_M$ is the right $\cO_M$-module characterized by the following transition rule: On each chart $U_\alpha$, it is freely generated by a section $\mathcal D_\alpha$ as a right $\cO_M$-module such that 
    \[
        \mathcal D_\beta|_{U_\alpha\cap U_\beta} = \mathcal D_\alpha|_{U_\alpha\cap U_\beta}\cdot \operatorname{Ber}(\operatorname{Jac}(\phi_{\alpha\beta}))
    \]
    where $\phi_{\alpha\beta}$ is the transition map from $U_\alpha$ to $U_\beta$, with
    \[
         \operatorname{Jac}(\varphi_{\alpha\beta}) = \begin{pmatrix}
            \partial_z \varphi_{\alpha\beta,\rm ev} & \partial_\theta \varphi_{\alpha\beta,\rm ev} \\
            \partial_z \varphi_{\alpha\beta,\rm odd} & \partial_\theta \varphi_{\alpha\beta,\rm odd}
        \end{pmatrix} =: \begin{pmatrix}
            A & B\\
            C & D
        \end{pmatrix}
    \]
    and 
    \[
        \operatorname{Ber}(\operatorname{Jac}(\varphi_{\alpha\beta})) = \det(A-BD^{-1}C)\det(D)^{-1}. \qedhere
    \]
\end{definition}

As the integration depends only on the coefficient of the top product of $\theta$'s, we have the following adjunction formula \cite[Theorem 5.3]{NojaForms}:

\begin{theorem}\label{adjunction}
    There is an isomorphism of $\cO_{M_{\bos}}$-modules
    \[
        \phi \colon \mathcal J_M^n\Ber_M\to \Omega_{M_{\bos}}^m,\quad\quad \mathcal D_\alpha\cdot \theta_1\cdots\theta_n \cdot f\mapsto dz_1 \wedge \cdots \wedge dz_m\cdot f_{\bos}
    \]
    where $f_{\bos}$ is the image of $f$ under the quotient morphism $\cO_M\to \cO_{M_{\bos}}$.
\end{theorem}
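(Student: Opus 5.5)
We verify that the map $\phi$ is well defined on each chart and independent of the chart; once that is done, bijectivity is a local check. On a chart $U_\alpha$ with coordinates $(z \mid \theta)$, the sheaf $\mathcal J_M^n$ is generated over $\cO_M$ by $\theta_1\cdots\theta_n$. Hence $\mathcal J_M^n \Ber_M|_{U_\alpha}$ is generated by $\mathcal D_\alpha\,\theta_1\cdots\theta_n$. Since $\theta_1\cdots\theta_n\cdot\mathcal J_M=0$, it is a module over $\cO_M/\mathcal J_M=\cO_{M_{\bos}}$, free of rank one. So the local formula $\mathcal D_\alpha\theta_1\cdots\theta_n f\mapsto dz_1\wedge\cdots\wedge dz_m\, f_{\bos}$ is a well-defined isomorphism of rank-one free $\cO_{M_{\bos}}(U_\alpha)$-modules. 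Here we use that $f$ only matters modulo $\mathcal J_M$, because $\theta_1\cdots\theta_n g=0$ for $g\in\mathcal J_M$.

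The key step is gluing, i.e.\ compatibility with the transition rule on $U_\alpha\cap U_\beta$. Write $\theta^\beta = \Psi(z^\alpha,\theta^\alpha)$. Modulo $\mathcal J_M^{n+1}=0$, only the linear part of $\Psi$ in $\theta^\alpha$ contributes to the top product. Thus
\[
\theta^\beta_1\cdots\theta^\beta_n=\det(D_{\bos})\,\theta^\alpha_1\cdots\theta^\alpha_n,
\]
where $D=\partial_\theta\Psi$, and any nilpotent corrections to $D$ are killed by $\theta^\alpha_1\cdots\theta^\alpha_n$. Similarly, $\operatorname{Ber}(\operatorname{Jac})\cdot\theta^\alpha_1\cdots\theta^\alpha_n = \det(A_{\bos})\det(D_{\bos})^{-1}\theta^\alpha_1\cdots\theta^\alpha_n$, because $BD^{-1}C$ lies in $\mathcal J_M$ (its entries are products involving odd functions $B,C$) and nilpotent parts of $A$ and $D$ drop out. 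Here $A_{\bos}$ is the Jacobian of the bosonic coordinate change $z^\beta_{\bos}(z^\alpha)$. Combining these,
\[
\mathcal D_\beta\,\theta^\beta_1\cdots\theta^\beta_n = \mathcal D_\alpha \operatorname{Ber}(\operatorname{Jac})\det(D_{\bos})\theta^\alpha_1\cdots\theta^\alpha_n=\mathcal D_\alpha\,\theta^\alpha_1\cdots\theta^\alpha_n\det(A_{\bos}).
\]
On the other side, $dz^\beta_1\wedge\cdots\wedge dz^\beta_m=\det(A_{\bos})\,dz^\alpha_1\wedge\cdots\wedge dz^\alpha_m$ on $M_{\bos}$. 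So the local maps agree on overlaps, up to the convention for the direction of the Jacobian (rows and columns versus $\alpha\to\beta$), which we fix so that the two factors match. This bookkeeping of the right-module convention and of the ordering of the Jacobian is the main obstacle.

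We must also be careful with supercommutativity, since $\theta$ products are moved past $\operatorname{Ber}$, which is even, so no signs arise. The local isomorphisms therefore glue to a global isomorphism $\phi$ of $\cO_{M_{\bos}}$-modules.
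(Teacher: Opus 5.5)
Your argument is correct. The paper does not prove this statement itself; it imports it from \cite[Theorem 5.3]{NojaForms}. What you give is a direct, self-contained chart verification. Locally, $\mathcal J_M^n\Ber_M$ is free of rank one over $\cO_{M_{\bos}}$ on $\mathcal D_\alpha\theta_1\cdots\theta_n$. Its transition factor is $\operatorname{Ber}(\operatorname{Jac})_{\bos}\det(D_{\bos})=\det(A_{\bos})$, which is exactly the transition factor of $dz_1\wedge\cdots\wedge dz_m$. In effect you prove at once that $\mathcal J_M^n\cong\det\cF_M$ and that $\Ber_M\otimes_{\cO_M}\cO_{M_{\bos}}\cong\omega_{M_{\bos}}\otimes(\det\cF_M)^\vee$. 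These are the reduced-level versions of the identifications the paper later uses in Proposition~\ref{Ber-supertorus}. The citation buys brevity; your computation makes the mechanism visible.

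Two small corrections to the write-up.

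First, the Jacobian convention is not something you get to fix. The paper's definition ($\phi_{\alpha\beta}$ goes from $U_\alpha$ to $U_\beta$, and $\mathcal D_\beta=\mathcal D_\alpha\cdot\operatorname{Ber}(\operatorname{Jac}(\phi_{\alpha\beta}))$) gives $A=\partial z^\beta/\partial z^\alpha$. That is exactly the choice for which your two factors match. With the inverse rule you would get $\det(A_{\bos})^{-1}\det(D_{\bos})^2$ instead of $\det(A_{\bos})$, and the local maps would not glue. So you should check the convention against the definition rather than choose it.

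Second, your first two steps require $M$ to be a supermanifold over $\C$. Those steps are that $\theta_1\cdots\theta_n$ generates $\mathcal J_M^n$, and that only the part of $\Psi$ linear in $\theta^\alpha$ survives in the top product. Over a nontrivial Grassmann algebra $\Lambda$, the ideal $\mathcal J_M$ also contains the odd constants, and transitions such as $\theta\mapsto\theta+\alpha$ have $\theta$-free parts; the statement as written would then fail. This is why the paper applies the theorem to the supertorus viewed over $\C$, with odd dimension $N=mn+n$, where the translations are linear in the odd coordinates.
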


\subsection{Supertori}\label{ss:supertori}

Fix a Grassmann algebra $\Lambda$ which is freely generated by symbols $\alpha_{ij}$ with $i = 1,\dots,m$ and $j = 1,\dots,n$. We denote $\C^{m|n} = \C^{m|n}_\Lambda$ as the affine superspace of dimension $m|n$ over $\Lambda$. If we regard the $\Lambda$-supermanifold $M$ as a $\C$-supermanifold, its odd dimension becomes \[ N := mn+n, \] so that its dimension is $m|N$.

Let $G$ be the free abelian group generated by $S_1,\dots,S_m,T_1,\dots,T_m$. We will denote by $\mathcal S$ (resp. $\mathcal T$) the subgroup of $G$ generated by $S_i$ (resp. $T_i$).  

For $1\leq i,j\leq m$, pick $\tau_{ij}\in \C$ so that the determinant of the imaginary part of the (normalized) period matrix is non-zero, i.e., $\det({\rm Im}(\tau_{ij}))\neq 0$. Define vectors $\btau_i = (\tau_{i1}, \ldots, \tau_{im})$. Consider the following $G$-action on the supermanifold $\C^{m|n}$ (considered as a locally ringed space): 
\begin{align*}
    S_i(z_1, \ldots, z_m, \theta_1, \ldots, \theta_n) &= (z_1, \ldots, z_i +1, \ldots, z_m, \theta_1, \ldots, \theta_n)\\
    T_i(z_1, \ldots, z_m, \theta_1, \ldots, \theta_n) &= (z_1+\tau_{i1}, \ldots, z_m+\tau_{im}, \theta_1 + \alpha_{i1}, \ldots, \theta_n + \alpha_{in})
\end{align*}
for $i = 1,\dots,m$.
We define the \defi{supertorus} $M$ of dimension $m|n$ over $\Lambda$ to be the quotient
\[
M:=\C^{m|n}/G,
\]
together with the quotient morphism
\[
\pi\colon \C^{m|n}\to M.
\]
From the construction, we have $\cO_{M} = \pi_*\cO_{\mathbb C^{m|n}}^G$. For details on the quotient construction, see \cite{MR2667819, quotient-supermanifold}.

\begin{lemma}
    The supertorus $M$ is a projected supermanifold.
\end{lemma}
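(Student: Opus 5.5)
We need to produce a morphism $p\colon M\to M_{\bos}$ with $p\circ i=\mathrm{id}_{M_{\bos}}$. First we identify $M_{\bos}$. Modulo the ideal generated by odd elements (the $\theta_j$ and the $\alpha_{ij}$), the $G$-action becomes the ordinary lattice action on $\C^m$: $S_i$ translates by $e_i$ and $T_i$ by $\btau_i$. Hence $M_{\bos}=\C^m/\Lambda_\tau$, where $\Lambda_\tau$ is the lattice spanned by $e_1,\dots,e_m,\btau_1,\dots,\btau_m$. This is the usual complex torus, and $\det(\operatorname{Im}\tau)\neq 0$ guarantees that the lattice is discrete of full rank. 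The underlying topological spaces of $M$ and $M_{\bos}$ coincide.

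Next, build $p$ upstairs and descend it. Consider the morphism of superspaces $P\colon \C^{m|n}\to \C^m$ given on functions by $f(z)\mapsto f(z)$, i.e.\ the even coordinates pull back to $z_1,\dots,z_m$ and the $\theta$'s are forgotten. Since $\C^{m|n}$ is a $\Lambda$-supermanifold, $P$ lands in $\C^m$ regarded over $\Lambda$. One must decide whether the reduction is over $\C$ or over $\Lambda$. If $M_{\bos}$ is taken over $\C$, we compose with $\Lambda\to\C$ only where needed. The key point is the equivariance of $P$ with respect to the lattice action on $\C^m$:
\[ P\circ S_i = s_i\circ P \qquad\text{and}\qquad P\circ T_i = t_i\circ P, \]
where $s_i,t_i$ are translation by $e_i,\btau_i$. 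This holds because the $G$-action on even coordinates is $z\mapsto z+e_i$ or $z\mapsto z+\btau_i$, with no dependence on $\theta$ and no odd corrections. The odd shifts $\theta_j\mapsto\theta_j+\alpha_{ij}$ affect only the odd coordinates, which $P$ ignores. Consequently, for a $\Lambda_\tau$-periodic function $f$ on an open set $U\subseteq M_{\bos}$, the pullback $P^*f=f(z)$ is $G$-invariant on $\pi^{-1}(U)$. It therefore defines a section of $\cO_M(U)=(\pi_*\cO_{\C^{m|n}})^G(U)$. This gives a sheaf map $p^\#\colon\cO_{M_{\bos}}\to\cO_M$, compatible with restriction and with the identity on spaces, so $p=(\mathrm{id},p^\#)$ is a morphism of ringed spaces.

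Finally, check $p\circ i=\mathrm{id}$. On functions, $i^\#\colon\cO_M\to\cO_M/\cJ_M$ is the quotient, and $p^\#f=f(z)$ has image $f$ modulo odd elements, so $i^\#\circ p^\#=\mathrm{id}$.

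The only delicate point is bookkeeping about the base. The $\alpha_{ij}$ are odd constants in $\Lambda$, and if $M$ is viewed as a $\C$-supermanifold of dimension $m|N$, they are nilpotent functions that die in $\cO_{M_{\bos}}$. With either convention, the map $z\mapsto z$ is invariant and splits the reduction. No genuine obstacle is expected, because the translations have purely bosonic even parts.
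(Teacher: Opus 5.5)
Your proposal is correct and is essentially the paper's proof: the inclusion $\cO_{\C^m}\hookrightarrow\cO_{\C^{m|n}}$, $f(z)\mapsto f(z)$, is $G$-equivariant because the even parts of the translations involve no odd terms, so it descends to a map $(\pi_*\cO_{\C^m})^G\to(\pi_*\cO_{\C^{m|n}})^G=\cO_M$ that splits the reduction. You spell out the equivariance and the check $i^\#\circ p^\#=\mathrm{id}$, which the paper leaves implicit. One small slip: passing to a $\C$-base uses the unit $\C\to\Lambda$ rather than the augmentation $\Lambda\to\C$, but your explicit formula $f\mapsto f(z)$ already does the right thing.
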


\begin{proof}
    The inclusion $\cO_{\C^m}\to \cO_{\C^{m|n}}$ induces a morphism $p \colon \cO_{M_{\bos}} = \pi_*\cO_{\C^m}^G\to \pi_*\cO_{\C^{m|n}}^G = \cO_{M}$ satisfying $p\circ i = {\rm id}_{M_{\bos}}$. 
\end{proof}

Next, we construct an isomorphism of right $\cO_M$-modules from $\cO_M$ to $\Ber_M$.

Let $M$ be the supertorus of dimension $m|n$. For each chart $U_\alpha$,  we assign an isomorphism of right $\cO_M$-modules $\psi_\alpha \colon \cO_{M}|_{U_\alpha}\to \mathcal D_\alpha\cdot \cO_{U_\alpha}$ given by $1\mapsto\mathcal D_\alpha$.

\begin{lem}\label{BerTrivial}
    The local isomorphisms $\psi_\alpha$ glue together to an isomorphism of right $\cO_M$-modules $\psi \colon \mathcal O_M\to \Ber_M$, i.e., $\mathcal Ber_M$ is trivial.
\end{lem}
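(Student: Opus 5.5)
We will show that the local generators $\mathcal D_\alpha$ themselves satisfy $\mathcal D_\beta = \mathcal D_\alpha$ on overlaps. Once that holds, the maps $\psi_\alpha$ (each sending $1\mapsto \mathcal D_\alpha$) agree on $U_\alpha\cap U_\beta$ and glue to a global isomorphism. By the transition rule in the definition of $\Ber_M$, it suffices to choose an atlas in which $\operatorname{Ber}(\operatorname{Jac}(\phi_{\alpha\beta}))=1$ for every transition map $\phi_{\alpha\beta}$.

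First, we choose the atlas adapted to the quotient $\pi\colon \C^{m|n}\to M$. Cover $M_{\bos}$ by open sets $U_\alpha$ small enough that $\pi^{-1}(U_\alpha)$ is a disjoint union of translates of a single open set $V_\alpha\subset\C^m$ mapping isomorphically onto $U_\alpha$. On $U_\alpha$ we take as coordinates the restrictions of the standard coordinates $(z_1,\dots,z_m\mid\theta_1,\dots,\theta_n)$ of $\C^{m|n}$ along the chosen lift $V_\alpha$. On an overlap $U_\alpha\cap U_\beta$, each connected piece is described as follows: the lifts $V_\alpha$ and $V_\beta$ differ there by a group element $g\in G$. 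Since $G$ is abelian and generated by $S_i$ and $T_i$, we can write $g=\prod S_i^{a_i}T_i^{b_i}$. The transition map is then the action of $g$,
\[
z_k\mapsto z_k + \sum_i a_i\delta_{ik} + \sum_i b_i\tau_{ik},\qquad \theta_j\mapsto \theta_j+\sum_i b_i\alpha_{ij}.
\]

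Second, we compute the Jacobian. Every component of the transition map is a coordinate plus a constant; the constants are even complex numbers or odd elements of $\Lambda$. Derivatives kill these constants, so $A=I_m$, $D=I_n$, and $B=C=0$. Hence
\[
\operatorname{Ber}(\operatorname{Jac}(\phi_{\alpha\beta}))=\det(I_m)\det(I_n)^{-1}=1,
\]
and therefore $\mathcal D_\beta=\mathcal D_\alpha$ on $U_\alpha\cap U_\beta$.

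Third, we conclude. Since $\psi_\alpha$ and $\psi_\beta$ both send $1$ to the same section on the overlap, and both are right $\cO_M$-linear, they agree there. They therefore glue to a morphism $\psi\colon\cO_M\to\Ber_M$, which is an isomorphism because it is one locally.

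The main obstacle is the first step: making precise that the transition maps of the quotient supermanifold are exactly the translations by elements of $G$. This requires the chart neighbourhoods to be small enough that the $G$-translates of $V_\alpha$ are disjoint; the action on $\C^m$ is free and properly discontinuous, being a lattice action since $\det(\operatorname{Im}\tau)\neq0$. It also requires a separate transition map on each connected component of the overlap. Beyond that, the only subtlety is that the odd shifts $\alpha_{ij}$ are $\Lambda$-constants annihilated by $\partial/\partial\theta_k$, which holds because we work over $\Lambda$.
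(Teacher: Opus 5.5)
Your proposal is correct and follows essentially the same argument as the paper: you take charts from lifts to $\C^{m|n}$ so that the transition maps are translations by elements of $G$, note that the Jacobian is the identity so each transition has Berezinian $1$, and glue the $\psi_\alpha$. The only cosmetic difference is in how overlaps are handled: the paper chooses balls $V_\alpha$ with $V_\alpha\cap gV_\beta\neq\emptyset$ for at most one $g\in G$, giving a single translation per overlap, while you allow a different translation on each connected component of $U_\alpha\cap U_\beta$, which works equally well.
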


\begin{proof}
    Let $\{V_\alpha\subseteq \C^{m|n}\}_\alpha$ be a collection of open balls such that for any $\alpha,\beta$, we have $V_\alpha\cap gV_\beta\neq \emptyset$ for at most one $g\in G$. Then $\{U_\alpha = \pi(V_\alpha)\}_{\alpha}$ is an atlas of $M$.   For intersecting charts $U_\alpha,U_\beta$, the transition map is given by
    \begin{align*}
        \varphi_{\alpha\beta}\colon  U_\alpha|_{U_\alpha\cap U_\beta} &\to U_\beta|_{U_\alpha\cap U_\beta}\\
        z_k^\alpha|\theta_l^\alpha &\mapsto z_k^\alpha+t_k|\theta_l^\alpha+\gamma_l
    \end{align*}
    where $t_k = 0,1$ or some $\tau_{ij}$, and $\gamma_l = 0$ or some $\alpha_{ij}$ are both constants in $\Lambda$.
    
    As a result, we have
    \[
        \operatorname{Jac}(\varphi_{\alpha\beta}) = \begin{pmatrix}
            \partial_z \varphi_{\alpha\beta,\rm ev} & \partial_\theta \varphi_{\alpha\beta,\rm ev} \\
            \partial_z \varphi_{\alpha\beta,\rm odd} & \partial_\theta \varphi_{\alpha\beta,\rm odd}
        \end{pmatrix} = \begin{pmatrix}
            I_m & 0\\
            0 & I_n
        \end{pmatrix}.
    \]
    This implies $\operatorname{Ber}(\operatorname{Jac}(\varphi_{\alpha\beta})) = 1$. Thus the isomorphisms $\psi_\alpha \colon \cO_{M}|_{U_\alpha}\to \mathcal D_\alpha\cdot \cO_{U_\alpha}$ are compatible with the transition functions so that they glue to a global isomorphism $\psi$ uniquely. 
\end{proof}

\begin{remark}
    The Berezinian $\Ber_M$ is canonically defined, while the isomorphism $\Ber_M\cong \cO_M$ constructed above depends on the choice of the covering $\pi \colon \C^{m|n}\to M$.
\end{remark}

\subsection{Highest weight theory}

We recall some relevant notions from Lie theory; all terminology not defined explicitly here is standard and can be found, for example, in \cite{Humphreys72Liealgbook}.
Let $\fg$ be a finite dimensional semisimple complex Lie algebra and $V$ be a finite dimensional $\fg$-representation. We fix a Cartan subalgebra $\fh\subseteq \fg$. Then the $\fh$-representation induces a decomposition
\[
    V = \bigoplus_{\lambda\in \fh^*}V_\lambda
\]
where $V_\lambda = \{v\in V \mid h\cdot v = \lambda(h)v \text{ for all } h\in\fh\}$. We call $V_\lambda$ a \defi{weight space} whose elements are \defi{weight vectors} and we call $\lambda\in\fh^*$ a \defi{weight}.

Let $\Phi$ be the root system of $\fg$. We choose a base $\Delta$ of it and define the \defi{Borel subalgebra} as 
\[
    \mathfrak{b} = \fh\oplus\bigoplus_{\alpha\in\Phi^{+}}\fg_\alpha.
\]
We define the \defi{nilpotent radical} of $\mathfrak{b}$ as $\mathfrak{n} = [\mathfrak b,\mathfrak b]$. We call a weight vector $v\in V$ a \defi{highest weight vector} (with respect to $(\fg,\fh,\Delta)$) if $x\cdot v = 0$ for all $x\in \fn$. The weight corresponding to this vector is called a \defi{highest weight}. If $V$ is generated by a highest weight vector as a $\fg$-representation, we say $V$ is a \defi{highest weight representation}.

If $V$ is irreducible, then there exists a unique highest weight and the collection of all highest weight vectors is a $1$-dimensional subspace of $V$.

The theory continues to work when $\fg$ is a reductive Lie algebra, i.e., a product of a semisimple Lie algebra with an abelian Lie algebra, as long as we assume that the abelian part acts by diagonalizable operators. The latter is automatic if $\fg$ is the Lie algebra of a complex Lie (or algebraic) group $G$ where the abelian part is the Lie algebra of a torus and the representation comes from $G$.

Now we specialize to the case $\fg = \fgl_n(\C)$. We can take $\fh$ to be the subalgebra of diagonal matrices and $\fb$ to be the subalgebra of upper-triangular matrices. In that case, $\fn$ is the subalgebra of strictly upper-triangular matrices. We identify weights with elements of $\C^n$. 

A \defi{partition} of a nonnegative integer $n$ is a tuple of nonnegative integers $\lambda = (\lambda_1, \ldots, \lambda_r)$ such that $\lambda_1 \ge \cdots \ge \lambda_r$ and $\lambda_1+\dots+\lambda_r = n$.  The \defi{length} of $\lambda$, denoted by $\ell(\lambda)$, is the number of nonzero entries of $\lambda$. We will generally regard two partitions as equivalent if their subsequence of positive entries are identical; in this way, for each $r$, we can speak interchangeably about partitions of length $\le r$ and nonnegative sequences of integers of length $r$, and we will do so without further comment below.

We use \defi{Young diagrams} to represent partitions graphically: for a partition $\lambda$, its Young diagram is a left-justified array of boxes with $\lambda_i$ boxes in row $i$. The \defi{conjugate partition} $\lambda^T$ to the partition $\lambda$ is defined by reflecting the Young diagram of $\lambda$ across the main diagonal. More explicitly, we have $\lambda^T_i = \#\{j \mid \lambda_j \ge i\}$.

For each partition $\lambda$, we have a corresponding Schur functor $\bS_\lambda$ (defined on vector spaces) which satisfies $\bS_\lambda(V) \ne 0$ if and only if $\dim V \ge \ell(\lambda)$. Detailed expositions of this topic can be found in  \cite{FultonHarrisreptheory1991, weyman}. In particular, $\bS_\lambda(\C^n)$ is a representation of $\fgl_n(\C)$; if $\ell(\lambda) \le n$, then this is an irreducible representation with highest weight $\lambda$. There are two familiar cases: if $\lambda = (d)$, then $\bS_\lambda$ is the $d$th symmetric power, while if $\lambda = (1,\dots,1)$, then $\bS_\lambda$ is the $d$th exterior power.

\section{Cohomology of supertori}

Our goal is to compute the coherent cohomology of $M$, i.e., the sheaf cohomology of its structure sheaf.

\subsection{Group cohomology description of $\rH^i$} \label{ss:group-coh}

Let $E$ be the exterior algebra with generators
\[
\{\alpha_{ij} \mid 1 \le i \le m,\ 1 \le j \le n\} \cup \{\theta_1,\dots,\theta_n\}.
\]
and  $N=\mathcal O_{\C^{m|n}}(\C^{m|n})$ be the ring of global regular functions. Then we have
\[
    N=\mathcal{O}_{\C^m}(\C^m)\otimes_{\C} E,
\]

Recall that $G$ is the free abelian group generated by $S_1,\dots,S_m,T_1,\dots,T_m$ and we denote $\mathcal S$ (resp. $\mathcal T$) as the subgroup of $G$ generated by $S_i$ (resp. $T_i$).  Then $G$, and hence also $\mathcal S$ and $\mathcal T$, act on both $E$ and $N$.

First, we need the following lemma.

\begin{lem}
    $\rH^i(M, \mathcal{O}_M)\cong \rH^i(G, N)$ for all $i$.
\end{lem}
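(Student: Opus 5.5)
The plan is to run the standard Cartan--Leray comparison for the free, properly discontinuous action of $G$ on $\C^{m|n}$. Let $X=\C^{m|n}$ and let $\pi\colon X\to M$ be the quotient map, so $\cO_M=(\pi_*\cO_X)^G$. The argument computes $\rH^i(M,\cO_M)$ in two ways through the composite functor
\[
\Gamma(M,-)\circ(\pi_*(-))^G \;=\; (-)^G\circ \Gamma(X,-)
\]
on $G$-equivariant $\cO_X$-modules, and compares the two resulting Grothendieck spectral sequences.

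The steps, in order:
\begin{enumerate}
\item First record that $G$ acts freely and properly discontinuously on the underlying space $\C^m$. This holds because $\det(\mathrm{Im}\,\tau)\neq 0$ makes $\mathbb Z^m+\tau\mathbb Z^m$ a lattice. Consequently every point of $M$ has a neighborhood $U=\pi(V)$ whose preimage is $\bigsqcup_{g\in G}gV$. On such $U$ we get
\[
(\pi_*\cF)^G(U)=\cF(V)
\]
for any equivariant sheaf $\cF$. Hence $\cF\mapsto(\pi_*\cF)^G$ is exact and takes $\cO_X$ to $\cO_M$.
\item Take the other route. By the lemma on Grothendieck spectral sequences for $(-)^G\circ\Gamma(X,-)$, this gives
\[
E_2^{p,q}=\rH^p\bigl(G,\rH^q(X,\cO_X)\bigr)\Rightarrow \rH^{p+q}_G(X,\cO_X),
\]
where the right side is the equivariant cohomology (the derived functors of the composite).
\item Establish the vanishing $\rH^q(X,\cO_X)=0$ for $q>0$. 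Since $\cO_X=\cO_{\C^m}\otimes_\C E$ with $E$ finite dimensional, this reduces to Cartan's Theorem~B on the Stein manifold $\C^m$, or to the affineness of $\mathbb A^m$ in the algebraic reading. The spectral sequence of step~2 therefore collapses to $\rH^p(G,N)$.
\item Compute the first route. By step~1 the functor $(\pi_*-)^G$ is exact, so its spectral sequence collapses to $\rH^i(M,\cO_M)$. Combining the two collapses gives $\rH^i(M,\cO_M)\cong\rH^i(G,N)$.
\end{enumerate}

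Using the composite functor in step~2 requires $\Gamma(X,-)$ to send equivariant injectives to $G$-acyclic modules. It also requires $(\pi_*-)^G$ to send equivariant injectives to $\Gamma(M,-)$-acyclic sheaves. Both are standard: the forgetful functor from equivariant sheaves has an exact left adjoint (induction $\bigoplus_g g^*$), so equivariant injectives are injective as sheaves, and their global sections are coinduced-type, hence $G$-acyclic.

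A more concrete alternative, which I would present if the abstract route is thought too heavy, proceeds as follows. Take a $G$-stable Leray cover $\{gV_\alpha\}$ of $X$ by balls, as in the proof of Lemma~\ref{BerTrivial}, and form the Čech double complex $\check C^q(\{gV_\alpha\},\cO_X)\otimes C^p(G)$ of standard cochains. Its row and column spectral sequences give the two sides, since Čech cochains on a $G$-free cover are coinduced, hence $G$-acyclic.

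The main obstacle I expect is the acyclicity bookkeeping in the super and $\Lambda$-linear setting, namely verifying that equivariant injectives behave as claimed. I would handle it by observing that everything is a sheaf of $\cO_{\C^m}$-modules tensored with the finite-dimensional $E$. In particular, the odd directions carry no topology, so the classical Cartan--Leray argument for the bosonic free quotient applies verbatim, with the $G$-action on $E$ simply carried along in the coefficients.
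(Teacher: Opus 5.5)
Your proposal is correct and follows essentially the paper's argument. The paper observes that $\rH^i(\C^{m|n},\cO_{\C^{m|n}})\cong \rH^i(\C^m,\cO_{\C^m}\otimes E)=0$ for $i>0$, notes that $\pi^{-1}\cO_M\cong\cO_{\C^{m|n}}$ for the covering $\C^m\to|M|$, and then simply quotes Mumford (\emph{Abelian Varieties}, Appendix to \S 2, Property (c)). That cited result is exactly the Cartan--Leray comparison you unpack in Steps 1--4. One small correction to your \v{C}ech alternative: the double complex should be $C^p(G,\check C^q)$, a Hom, not a tensor product $\check C^q\otimes C^p(G)$.
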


\begin{proof} 
First, since $\mathcal O_{\C^{m|n}}\cong \mathcal O_{\C ^m}\otimes_\C E$ as sheaves of abelian groups, we conclude that 
\[
\rH^i(\C^{m|n},\mathcal O_{\C^{m|n}}) \cong \rH^i(\C ^m,\mathcal O_{\C ^m}\otimes E) = 0 \text{ for $i>0$.}
\]

    Hence we can apply \cite[Appendix to \S 2, Property (c)]{Mumfordabelianvarieties1970} to the covering map on the underlying spaces $\pi\colon \C^m\to |M|$ (here $|M|$ is the underlying topological space of $M$) and the sheaf $\mathcal O_M$ to get an isomorphism of abelian groups
    \[
        \rH^i(G,\pi^{-1} \mathcal O_M(\C ^m))\to \rH^i(M,\mathcal O_M).
    \]
    Since $\pi$ is a covering map, we have $\pi^{-1} \mathcal O_M \cong \mathcal O_{\C^{m|n}}$ as sheaves of abelian groups from the definition of $\mathcal O_M$.
\end{proof}

We first show that the cohomology is independent of the even coordinates $z$, in the sense that every class in $\rH^i(M,\mathcal O_M)$ has a representative valued in $E$. 

\begin{proposition}  \label{prop:HM=HTE}
    $\rH^i(M,\mathcal O_M) \cong \rH^i(\mathcal T,E)$ for all $i$.
\end{proposition}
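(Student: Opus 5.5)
We compute $\rH^i(G,N)$ with $G=\mathcal S\times\mathcal T$ and reduce in two stages: first take invariants and cohomology for the integer translations $\mathcal S$, then for $\mathcal T$. By the preceding lemma, $\rH^i(M,\cO_M)\cong \rH^i(G,N)$. Since $G$ is abelian, the Lyndon--Hochschild--Serre spectral sequence for the normal subgroup $\mathcal S$ has
\[
E_2^{p,q}=\rH^p\bigl(\mathcal T,\rH^q(\mathcal S,N)\bigr)\Rightarrow \rH^{p+q}(G,N).
\]
Concretely, this is the double complex obtained from the Koszul complexes in the commuting operators $S_i-1$ and $T_i-1$. The group $\mathcal S$ acts trivially on $E$ and by $z_k\mapsto z_k+\delta_{ik}$ on $\cO_{\C^m}(\C^m)$. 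Hence $\rH^q(\mathcal S,N)=\rH^q(\mathcal S,\cO_{\C^m}(\C^m))\otimes E$.

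\emph{Algebraic case.} Here $\cO_{\C^m}(\C^m)=\C[z_1,\dots,z_m]$. The difference operators $\Delta_i=S_i-1$ lower degree in $z_i$ and are surjective on $\C[z_i]$ with kernel $\C$. By the Künneth formula for Koszul complexes, $\rH^q(\mathcal S,\C[z])=\C$ for $q=0$ and vanishes for $q>0$. The spectral sequence then collapses to $\rH^p(\mathcal T,E)$, where $\mathcal T$ acts on $N^{\mathcal S}=E$ through $\theta_j\mapsto\theta_j+\alpha_{ij}$.

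\emph{Analytic case.} The map $\C^m\to(\C^*)^m$, $z\mapsto w=(e^{2\pi i z_k})$, is a covering with deck group $\mathcal S$ and total space $\C^m$, which is acyclic for $\cO$. The same Mumford argument as in the preceding lemma gives $\rH^q(\mathcal S,\cO(\C^m))\cong \rH^q((\C^*)^m,\cO)$. This vanishes for $q>0$ by Cartan's Theorem B, since $(\C^*)^m$ is Stein. So again only $q=0$ survives, and $\rH^i(G,N)\cong \rH^i(\mathcal T,P\otimes E)$. Here $P$ is the ring of $\mathcal S$-periodic entire functions, i.e.\ convergent Laurent series $\sum_{k\in\mathbb Z^m} c_k e^{2\pi i k\cdot z}$. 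Write $P=\C\oplus P'$, where $P'$ consists of the series with $c_0=0$. This splitting is $\mathcal T$-stable, since $T_i$ multiplies $e^{2\pi i k\cdot z}$ by $\chi_k(T_i)=e^{2\pi i k\cdot\btau_i}$. It therefore remains to show $\rH^\bullet(\mathcal T,P'\otimes E)=0$.

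To prove this vanishing, filter $E$ by $\theta$-degree. Each $T_i-1$ acts on $E$ by strictly lowering $\theta$-degree, so on the associated graded $\mathcal T$ acts on $P'\otimes\mathrm{gr}E$ through $P'$ alone. Since the filtration is finite, it suffices to show $\rH^\bullet(\mathcal T,P')=0$. Mode by mode, the Koszul complex of the character $\chi_k$ is contractible as soon as some $\chi_k(T_i)\neq1$. This holds for every $k\neq0$: $|\chi_k(T_i)|=e^{-2\pi k\cdot\mathrm{Im}\,\btau_i}$, and $\det(\mathrm{Im}\,\tau)\neq0$ ensures that some $k\cdot\mathrm{Im}\,\btau_i\neq0$.

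\emph{Main obstacle.} The modewise contractions must assemble into a homotopy on convergent series. Nondegeneracy of $\mathrm{Im}\,\tau$ gives a constant $c>0$ such that for each $k\neq 0$ we can choose $i(k)$ with $|k\cdot\mathrm{Im}\,\btau_{i(k)}|\ge c|k|$. We then use the homotopy $h(e^{2\pi i k\cdot z}\,\cdot)=(\chi_k(T_{i(k)})-1)^{-1}\,\iota_{i(k)}$ on the $k$-th mode. Its coefficients $|\chi_k(T_{i(k)})-1|^{-1}$ are uniformly bounded: they are about $1$ when $|\chi|$ is small and are small when $|\chi|$ is large. Hence $h$ preserves the decay of Fourier coefficients that characterizes convergence. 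Since $i(k)$ varies with $k$, I would organize this by partitioning $\mathbb Z^m\setminus\{0\}$ according to $i(k)$. I would then check that on each piece the standard Koszul contraction $h d+d h=\mathrm{id}$ holds with these bounded coefficients; this verification is where care is needed. This gives the vanishing and hence $\rH^i(M,\cO_M)\cong \rH^i(\mathcal T,E)$.
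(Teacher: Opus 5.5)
Your proposal is correct and follows essentially the same route as the paper. Both use Hochschild--Serre with $\rH^{q}(\mathcal S,N)=0$ for $q>0$ (via Steinness of $(\C^*)^m$), split off the constant Fourier mode, and kill the nonconstant modes by partitioning $\mathbb Z^m\setminus\{0\}$ according to an index $i(k)$ for which $|\chi_k(T_{i(k)})-1|$ is uniformly bounded below (the paper's $\Psi$ and $\mathcal P_i$), so that dividing Fourier coefficients by it preserves convergence. The differences are cosmetic: you pass from $P'\otimes E$ to $P'$ by filtering $E$ by $\theta$-degree, where the paper notes that an invertible operator plus a commuting nilpotent one is invertible; and you write an explicit mode-wise contracting homotopy (where $hd+dh=\mathrm{id}$ is just $\iota_{e_i}(e_j\wedge -)+e_j\wedge\iota_{e_i}=\delta_{ij}$, so no extra care is needed), where the paper inverts $T_i-1$ on $\mathcal P_i\otimes E$ and factors the Koszul complex.
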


\begin{proof}
Since $\mathcal{S}$ and $\mathcal{T}$ commute with each other, we have $G/\mathcal{S}\cong \mathcal{T}$. Then the Hochschild--Serre spectral sequence \cite[6.8.2]{Weibel1994} applied to the following short exact sequence
\[
    0\rightarrow \mathcal{S}\rightarrow G\rightarrow \mathcal{T}\rightarrow 0
\] 
yields
\[
    \rE^{pq}_{2} = \rH^p(\mathcal{T}, \rH^q(\mathcal{S},N)) \Longrightarrow \rH^{p+q}(G, N).
\]
The proof now follows from two claims which are proven below:  $\rH^i(\mathcal S,N)=0$ for $i>0$ (Lemma~\ref{lem:SNpos-vanish}) and $\rH^i(\cT, \rH^0(\mathcal S,N))=\rH^i(\cT,E)$ for all $i\geq 0$ (Lemma~\ref{indepofz}). 
\end{proof}

\begin{notation*}
To avoid conflicting with indices, we will write $\sqrt{-1}$ throughout rather than the commonly used symbol $i$.
\end{notation*}

First, we deal with the easier lemma. 

\begin{lem} \label{lem:SNpos-vanish}
    $\rH^i(\mathcal S,N)=0$ for $i>0$. 
\end{lem}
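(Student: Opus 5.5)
Since $\mathcal S\cong\mathbb Z^m$ is free abelian on $S_1,\dots,S_m$, the cohomology $\rH^\bullet(\mathcal S,N)$ is computed by the Koszul complex on the commuting operators $S_1-1,\dots,S_m-1$ acting on $N$. Each $S_i$ acts trivially on $E$ (it only shifts $z_i$ by $1$), so $N=\mathcal O_{\C^m}(\C^m)\otimes_\C E$ is an $\mathcal S$-module that is a tensor product of $\mathcal O_{\C^m}(\C^m)$ with the trivial module $E$. Because $E$ is finite dimensional, $\rH^i(\mathcal S,N)\cong \rH^i(\mathcal S,\mathcal O_{\C^m}(\C^m))\otimes E$. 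It therefore suffices to prove $\rH^i(\mathcal S,\mathcal O_{\C^m}(\C^m))=0$ for $i>0$, where $S_i$ acts by $f(z)\mapsto f(z+e_i)$.

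The plan is to reduce the $m$-variable statement to a one-variable statement by a Künneth-type or inductive argument. We filter by the subgroups $\langle S_1,\dots,S_k\rangle$ and use the Hochschild--Serre spectral sequence: if $\rH^q(\langle S_1\rangle,\cdot)$ vanishes for $q>0$ and the invariants again form a module of the same type, then induction on the number of generators gives the result. Concretely, the key one-variable fact is the following. Let $D=S_1-1$ act on $\mathcal O(\C^m)$ (or on $\mathcal O(\C^m)^{\langle S_2,\dots\rangle}$). Then $D$ is surjective, which gives $\rH^1(\langle S_1\rangle,\cdot)=0$; $\rH^q$ for $q\ge2$ vanishes automatically for $\mathbb Z$. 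The invariants of $S_1$ are the functions $1$-periodic in $z_1$, and these are again acted on by the other $S_j$. The induction step then needs surjectivity of $S_j-1$ on functions that are already periodic in $z_1,\dots,z_{j-1}$.

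The main step is the surjectivity of the difference operator $f(z)\mapsto f(z+e_j)-f(z)$ on entire functions, with parameters (the other variables) holomorphic and possibly periodic in earlier variables. In the algebraic interpretation this is elementary: on polynomials, $S_j-1$ lowers the $z_j$-degree by exactly one, so any polynomial $g$ has a polynomial preimage of the form $\sum_k c_k(z')\binom{z_j}{k+1}$ with $c_k$ polynomial in the other variables; periodicity in earlier variables is automatic since polynomials periodic in a variable are constant in it. In the analytic setting, I would use the classical solution of the difference equation $f(w+1)-f(w)=g(w)$ for entire $g$. One approach is to write $g$ via an interpolation series or a Mittag-Leffler-type construction: solve on each strip with a Cauchy-integral kernel $\frac{\pi}{\,e^{2\pi\sqrt{-1}(w-t)}-1\,}$-type formula along a vertical line, and correct the growth with convergence factors. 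Alternatively, invoke the vanishing $\rH^1$ of the Stein quotient $\C^m/\mathcal S\cong(\C^*)^{k}\times\C^{m-k}$, which is Stein. This second route is cleaner and handles all parameters at once. By the same Mumford covering property used in the previous lemma, $\rH^i(\mathcal S,\mathcal O(\C^m))\cong \rH^i(\C^m/\mathcal S,\mathcal O)$, and this vanishes for $i>0$ by Cartan's Theorem B, since $\C^m/\mathcal S\cong(\C^*)^m$ (via $z_i\mapsto e^{2\pi\sqrt{-1}z_i}$) is Stein. The analogous algebraic statement follows from the degree argument above.

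The expected obstacle is only to justify the covering-space comparison for $\mathcal S$ acting freely and properly discontinuously on $\C^m$, together with the acyclicity of $\C^m$ for $\mathcal O$. Both are already established in the proof of the preceding lemma, so the argument reduces to Theorem B for $(\C^*)^m$, with $E$ carried along as a finite-dimensional trivial tensor factor.
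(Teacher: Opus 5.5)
Your proposal is correct and, once you commit to the ``second route,'' it is essentially the paper's proof. Both arguments pull out $E$ as a finite-dimensional trivial tensor factor, use Mumford's covering-space comparison (valid because $\rH^i(\C^m,\cO_{\C^m})=0$ for $i>0$) to identify $\rH^i(\mathcal S,\cO(\C^m))$ with $\rH^i(\C^m/\mathcal S,\cO)$, and conclude from $\C^m/\mathcal S\cong(\C^*)^m$ being Stein; the inductive Hochschild--Serre and difference-equation sketch that precedes this is not needed.
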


\begin{proof}
    Since $\mathcal S$ acts trivially on $E$, we have 
    \[
    \rH^i(\mathcal S,N)=\rH^i(\mathcal S, \mathcal{O}(\C^m))\otimes_{\C} E.
    \]
    So, it suffices to show that $\rH^i(\mathcal S, \mathcal{O}(\C^m))=0$ for all $i>0$. Since $\rH^i(\C^m, \mathcal{O}_{\C^m})=0$ for $i>0$, we can apply \cite[Appendix to \S 2, Property (c)]{Mumfordabelianvarieties1970}, to get an isomorphism 
    \[
    \rH^i(\mathcal{S}, \mathcal{O} (\C^m))\to \rH^i(\C^m/\mathcal{S}, \mathcal{O}_{\C^m/\mathcal{S}}).
    \]
    Note that
\[
\C^m/\mathcal{S}\cong (\C^*)^m
\]
via the map induced by the coordinatewise exponential
\[
(z_1,\dots,z_m)\longmapsto \bigl(e^{2\pi \sqrt{-1} z_1},\dots,e^{2\pi \sqrt{-1} z_m}\bigr).
\]
Next, \((\C^*)^m\) is a Stein manifold by \cite[Chapter~V, \S 1, Theorem~5]{GrauertSteinspacebook79} since we can write:
\[
(\C^*)^m=\C^m\setminus H,
\qquad
H=\{z_1\cdots z_m=0\}.
\]
So  $\rH^i(\C^m/\mathcal{S}, \mathcal{O}_{\C^m/\mathcal{S}})=0$ for $i>0$. Thus, we get $\rH^i(\mathcal{S}, \mathcal{O} (\C^m))=0$ for all $i>0$,  as desired.
\end{proof}

\subsection{Proof that $\rH^i(\cT,\rH^0(\mathcal S,N))=\rH^i(\cT,E)$}  \label{ss: fourier_proof}

As before, since $\mathcal S$ acts trivially on $E$, we have $\rH^0(\mathcal S,N)=\rH^0(\mathcal S, \mathcal{O}(\C^m))\otimes_{\C} E$. 
Our goal then is to prove the following lemma.

\begin{lem}\label{indepofz}
$\rH^i(\mathcal T,\rH^0(\mathcal S, \mathcal{O}(\C^m))\otimes_{\C} E) \cong \rH^i(\mathcal T, E)$ for all $i\geq 0$.
\end{lem}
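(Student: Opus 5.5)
The module $P:=\rH^0(\mathcal S,\mathcal O(\C^m))$ consists of the entire functions that are $1$-periodic in each $z_k$. Each such $f$ has a Fourier expansion $f(z)=\sum_{\nu\in\mathbb Z^m} c_\nu e^{2\pi\sqrt{-1}\langle \nu,z\rangle}$, converging absolutely and locally uniformly on $\C^m$. This is the Laurent expansion of the corresponding holomorphic function on $(\C^*)^m$. We split $P=\C\oplus P'$, where $\C$ is the constants and $P'$ is the set of $f$ with $c_0=0$. The splitting is $\cT$-stable: $T_i$ acts on $P$ by $f(z)\mapsto f(z+\btau_i)$, which multiplies $c_\nu$ by $e^{2\pi\sqrt{-1}\langle\nu,\btau_i\rangle}$ and so preserves $c_0$. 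Since $\C\otimes E=E$, the lemma reduces to showing $\rH^i(\cT,P'\otimes E)=0$ for all $i\ge 0$.

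We compute $\rH^\bullet(\cT,-)$ for the free abelian group $\cT\cong\mathbb Z^m$ by the Koszul complex of the commuting operators $T_1-1,\dots,T_m-1$. On $P'\otimes E$ we have $T_i=A_i\otimes B_i$, where $A_i$ is the translation by $\btau_i$ on $P'$ and $B_i$ is the unipotent operator on $E$ given by $\theta_j\mapsto\theta_j+\alpha_{ij}$. Filter $E$ by degree in the $\theta$'s. Each $B_i$ preserves this filtration and acts as the identity on the associated graded. A spectral sequence argument, or induction on the finite filtration together with the long exact sequence, therefore reduces the claim to showing
\[
\rH^\bullet(\cT,P')=0,
\]
where $\cT$ acts by the $A_i$ alone (tensoring with a finite-dimensional trivial module).

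For this, a Koszul complex is exact once one of the operators $A_i-1$ is invertible, because multiplication by that operator is then null-homotopic. Globally, however, no single $A_i-1$ need be invertible on $P'$: for $\nu\neq 0$ some $\langle\nu,\btau_i\rangle$ is non-integral, but the relevant index $i$ depends on $\nu$. The plan is therefore to work Fourier-mode by Fourier-mode. On the mode $\nu$, the operator $A_i$ acts by the scalar $\lambda_{\nu,i}=e^{2\pi\sqrt{-1}\langle\nu,\btau_i\rangle}$. The condition $\det(\mathrm{Im}\,\tau)\ne0$ gives, for $\nu\ne 0$, some $i$ with $\mathrm{Im}\langle\nu,\btau_i\rangle\neq0$, and hence $|\lambda_{\nu,i}|\ne1$. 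Choose such an $i(\nu)$ for each $\nu$, with $|\mathrm{Im}\langle\nu,\btau_{i(\nu)}\rangle|\ge c|\nu|$ for a constant $c>0$. Define the contracting homotopy on mode $\nu$ using $(\lambda_{\nu,i(\nu)}-1)^{-1}$ times interior multiplication by $e_{i(\nu)}$ in the Koszul complex. Each mode is then acyclic, and the homotopy is assembled as a termwise operator on Fourier series.

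The \textbf{main obstacle} is convergence: the assembled homotopy must send a Koszul cochain with entire periodic coefficients to one with entire coefficients. The estimate $|\lambda_{\nu,i(\nu)}-1|\ge 1-e^{-2\pi c|\nu|}$ is bounded below uniformly for $\nu\neq0$, since it is at least $1-e^{-2\pi c}$. The multipliers $(\lambda-1)^{-1}$ are therefore uniformly bounded, so the rapid decay of the $c_\nu$ on every strip is preserved and the new series define entire functions.

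A subtlety remains: a mode-dependent choice of $i(\nu)$ does not obviously give a global chain homotopy. I would handle this by partitioning $\mathbb Z^m\setminus\{0\}$ into sets $\Sigma_i$ on which $i(\nu)=i$. This decomposes $P'=\bigoplus_i P'_{\Sigma_i}$ as a $\cT$-module, with each summand closed under the convergent projections. On $P'_{\Sigma_i}$ the operator $A_i-1$ is invertible with bounded inverse, which makes each summand acyclic.
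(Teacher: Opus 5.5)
Your proof is correct and follows essentially the same route as the paper: it also splits off the constants, partitions the nonzero Fourier modes by an explicit choice function $\Psi(\mathbf{n})=\operatorname{argmax}_i|\mathrm{Im}(\btau_i)\cdot\mathbf{n}|$ (which yields your constant $c=\sigma_{\min}(\mathrm{Im}\,\btau)/\sqrt{m}$), and proves $T_i-1$ invertible on each piece from the same uniform lower bound $1-e^{-2\pi c}$ before concluding by Koszul acyclicity. The only difference is cosmetic: the paper handles the factor $E$ by writing $T_i\otimes T_i-1=(T_i-1)\otimes T_i+1\otimes(T_i-1)$ as an invertible operator plus a commuting nilpotent one, whereas you use the $\theta$-degree filtration and long exact sequences.
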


\begin{proof}
We first note that $\rH^0(\mathcal{S}, \mathcal{O}(\C^m))$ consists of periodic holomorphic functions on $\C^m$. Any such function has a (unique) Fourier expansion
\[
    f(\mathbf{z})=\sum_{\mathbf{n}\in \mathbb{Z}^m}f_{\mathbf{n}}\exp(2\pi \sqrt{-1} \mathbf{n}\cdot \mathbf{z})
\]
To see this, we recall that $\C^m/\mathcal{S} \cong (\C^*)^m$. Under this identification, the Fourier expansion above is nothing but a Laurent expansion, where such a unique series representation is well known \cite[Page~35, Theorem~2]{Shabatcomplexsev92}. We observe that $T_i$ acts on $\rH^0(\mathcal{S}, \mathcal{O}(\C^m))$ by 
\[
T_if(\mathbf{z})=f(\mathbf{z}+\btau_i)
=\sum_{\mathbf{n}\in \mathbb{Z}^m} f_{\mathbf{n}}\exp(2\pi \sqrt{-1} \mathbf{n} \cdot \btau_i)\exp(2\pi \sqrt{-1} \mathbf{n} \cdot \mathbf{z}).
\]
For any $\mathbf{n}\neq \mathbf{0}$ we note that as the matrix $({\rm Im}(\btau))$ is non-singular, we have $({\rm Im}(\btau)) \mathbf{n}\neq \mathbf{0},$ which means there exists $1\leq i\leq m$ such that ${\rm Im}(\btau_i) \cdot \mathbf{n} \neq 0$. We define 
\[
\Psi\colon  \mathbb{Z}^m\backslash \{\mathbf{0}\}\to \{1,2,\ldots,m\}, \qquad 
    \Psi(\mathbf{n}):=\underset{1 \le i \le m}{\operatorname{argmax}}\ |{\rm Im}(\btau_i) \cdot \mathbf{n}|
\]
If there is more than one argmax, we choose the smallest index. Now, for $1\leq i\leq m$, we define
\[
\mathcal{P}_i:= \{ f\in \rH^0(\mathcal{S}, \mathcal{O}(\C^m)) \mid f(\mathbf{z})=
\sum_{\substack{{\mathbf{n}\in \mathbb{Z}^m\backslash\{\mathbf{0}\}}\\ \Psi(\mathbf{n})=i}}f_{\mathbf{n}}\exp(2\pi \sqrt{-1} \mathbf{n}\cdot\mathbf{z}) \}
\]
to be those functions whose Fourier coefficients $f_\bn$ vanish when $\mathbf{n}=\mathbf{0}$ or when $\Psi(\mathbf{n})\neq i$. We also define
\[
\mathcal{P}_0:=  \{ f\in \rH^0(\mathcal{S}, \mathcal{O}(\C^m)) \mid f(\mathbf{z})=f_\mathbf{0}\}
\] 
to be the constant functions. It's clear from the action of $T_i$ that these are $\mathcal{T}$-submodules of $\rH^0(\mathcal{S}, \mathcal{O}(\C^m))$ and we have a direct sum decomposition as $\mathcal{T}$-modules:
\[
 \rH^0(\mathcal{S}, \mathcal{O}(\C^m))=\bigoplus_{0\leq i\leq m} \mathcal{P}_i.
\]

Since $\mathcal{P}_0 \cong \C$ is a trivial $\cT$-module, we have
\[
\rH^j(\mathcal{T}, \rH^0(\mathcal{S}, \mathcal{O}(\C^m))\otimes_{\C} E)=\rH^j(\mathcal{T},E)\oplus \bigoplus_{i= 1}^m \rH^j(\mathcal{T}, \mathcal{P}_i\otimes_{\C}E)
\]
for any $j\geq 0$. Thus, to prove the lemma, it suffices to show that $\rH^j(\mathcal{T}, \mathcal{P}_i\otimes_{\C}E)=0$ for all $1\leq i\leq m$ and $j\geq 0$. This will be a consequence of the following claims. 

\begin{claim}
For each $1\leq j\leq m$, the operator $T_j-1$ is invertible on $\mathcal{P}_j$.
\end{claim}

\addtocounter{equation}{-1}
\begin{subequations}
\begin{proof}
    Let us fix a $j \in \{1, \ldots, m\}$. For $f \in \mathcal{P}_j$,  we have
    \[
    (T_j -1)f = \sum_{\substack{{\mathbf{n}\in \mathbb{Z}^m\backslash\{\mathbf{0}\}}\\ \Psi(\mathbf{n})=j}} (\exp(2 \pi \sqrt{-1} \mathbf{n} \cdot  \btau_j) - 1) f_{\mathbf{n}}\exp(2\pi \sqrt{-1} \mathbf{n} \cdot \mathbf{z}).
    \]
    By orthogonality of the Fourier basis, if $(T_j -1) f =0$ we have $(\exp(2\pi \sqrt{-1} \mathbf{n} \cdot  \btau_j) - 1)f_{\mathbf{n}} =0$ for all $\mathbf{n} \in \mathbb{Z}^m\backslash\{\mathbf{0}\}$ such that $\Psi(\mathbf{n}) = j$. By definition of $\Psi$, we have $\mathbf{n} \cdot  \btau_j \ne 0$ and in fact, because $({\rm Im}(\btau_j)) \cdot \mathbf{n} \ne 0$, we have that $\mathbf{n} \cdot  \btau_j \notin \mathbb{Z}$, so $\exp(2\pi \sqrt{-1} \mathbf{n} \cdot  \btau_j) - 1 \ne 0$. Hence we have $f_{\mathbf{n}} =0$ for all $\mathbf{n} \in \mathbb{Z}^m\backslash\{\mathbf{0}\}$ such that $\Psi(\mathbf{n}) = j$. So $f = 0$ and the map $(T_j -1)$, on $\mathcal P_j$, is injective. 

    To prove surjectivity, pick $F \in \mathcal P_j$ and let $F_\bn$ be its Fourier coefficients. 
Define the series
\begin{equation}\label{eq:surjtjminus1series}
    \tilde{F} = \sum_{\substack{{\mathbf{n}\in \mathbb{Z}^m\backslash\{\mathbf{0}\}}\\ \Psi(\mathbf{n})=j}}
    \frac{F_{\mathbf{n}}}{\exp(2 \pi \sqrt{-1} \mathbf{n} \cdot \btau_j) - 1} \exp(2\pi \sqrt{-1} \mathbf{n} \cdot \mathbf{z}).
    \end{equation}
    We will show that this defines an element of $\cP_j$; once this is done, we will have $(T_j-1)\tilde{F} = F$ by definition.

We now do some basic estimates. We know from Section \ref{subsec:supgeosuptori} that $\operatorname{Im}(\btau)=(\operatorname{Im}(\tau_{jk}))_{j,k=1}^m$ is invertible. Then its smallest singular value, $\sigma_{\min}(\operatorname{Im}(\btau))$, is positive, i.e. 
\[
\sigma_{\min}(\operatorname{Im}(\btau))=\min_{\|x\|=1}\|\operatorname{Im}(\btau) x\|>0.
\]
For any nonzero integer vector $\mathbf{n}\in\mathbb{Z}^m$ we have $\|\bn\|\ge1$, hence
\[
\|\operatorname{Im}(\btau) \mathbf{n}\|=\|\mathbf{n}\|\,\Big\|\operatorname{Im}(\btau)\!\Big(\tfrac{\mathbf{n}}{\|\mathbf{n}\|}\Big)\Big\|\;\ge\;\|\mathbf{n}\|\,\sigma_{\min}(\operatorname{Im}(\btau))\;\ge\;\sigma_{\min}(\operatorname{Im}(\btau)).
\]
Writing the coordinates of $\operatorname{Im}(\btau) \mathbf{n}$ as
\[
\operatorname{Im}(\btau_{\ell}) \cdot  \mathbf{n}=\sum_{k=1}^m n_k\,\operatorname{Im}(\tau_{\ell k}),
\]
we use the inequality $\max_{\ell} |\operatorname{Im}(\btau_{\ell}) \cdot \mathbf{n}|\ge \|\operatorname{Im}(\btau) \cdot \mathbf{n}\|/\sqrt{m}$ to obtain
\[
\max_{1\le \ell \le m}\Big|\sum_{k=1}^m n_k\,\operatorname{Im}(\tau_{\ell k})\Big|\;\ge\;\frac{\sigma_{\min}(\operatorname{Im}(\btau))}{\sqrt{m}}.
\]
Since $\Psi(\mathbf{n}) = j$, we have 
\begin{align}\label{ineq:lowerbdindepn}
    \Big|\sum_{k=1}^m n_k\,\operatorname{Im}(\tau_{jk})\Big|\;\ge\;\frac{\sigma_{\min}(\operatorname{Im}(\btau))}{\sqrt{m}}>0,
\end{align}
which provides a lower bound independent of $\bn$. Now, 
\[
    \begin{aligned}
        |\exp{(2 \pi \sqrt{-1} \mathbf{n} \cdot \btau_j}) -1| &= |\exp{(2 \pi \sqrt{-1} \mathbf{n} \cdot \operatorname{Re}(\btau_j))}\exp{(-2 \pi  \mathbf{n} \cdot \operatorname{Im}(\btau_j))} -1| \\
        & \geq |1 - \exp{(-2 \pi  \mathbf{n} \cdot  \operatorname{Im}(\btau_j))}| \\
        & \geq 1 -  \exp{(-2 \pi  |\mathbf{n} \cdot \operatorname{Im}(\btau_j)|)}
        \\
        & \geq 1 -  \exp{(-2 \pi  \sigma_{\min}(\operatorname{Im}(\btau))/\sqrt{m})} > 0.
    \end{aligned}
\]
The first inequality follows from the triangle inequality $|a-b|\geq ||a|-|b||$. 
The second inequality follows because for any real $x$, we have $|1 - \exp{(-x)}| \geq 1 - \exp{(-|x|)}$. 
The third inequality follows from \eqref{ineq:lowerbdindepn}.
 
We observe that the series 
\begin{align}\label{Fexpression}
    F=\sum_{\substack{{\mathbf{n}\in \mathbb{Z}^m\backslash\{\mathbf{0}\}}\\ \Psi(\mathbf{n})=j}} F_{\mathbf{n}}\exp(2\pi \sqrt{-1} \mathbf{n} \cdot\mathbf{z})
\end{align}
converges absolutely and uniformly on any compact subset of $\C^m/\mathcal{S}$. This can be done again by the same identification $\C^m/\mathcal{S} \cong (\C^*)^m$, as before. Under this identification, the Fourier expansion above is nothing but a Laurent expansion whose convergence is well known \cite[Page~35, Theorem~2]{Shabatcomplexsev92}. After a suitable change of coordinates on $(\C^*)^m$, such a Laurent series can be written as a finite sum of power series, each converging normally (for a definition of normal convergence, see \cite[\S 2.2]{HormanderSCV}). Now the estimates above show that for any compact subset $K \subset (\C^*)^m$, where we denote the coordinate of $(\C^*)^m$ as $\mathbf{q}$, we have
\begin{align*}
    & \sup_{\mathbf{q} \in K} \sum_{\substack{{\mathbf{n}\in \mathbb{Z}^m\backslash\{\mathbf{0}\}}\\\Psi(\mathbf{n})=j \\ \|\mathbf{n}\|_1>k}} 
    \left|\frac{F_{\mathbf{n}}}{\exp(2 \pi \sqrt{-1} \mathbf{n} \cdot \btau_j) - 1} \mathbf{q}^{\mathbf{n}} \right|  
    \leq \frac{1}{1 -  \exp(\frac{-2 \pi}{\sqrt{m}}  \sigma_{\min}(\operatorname{Im}(\btau)))} \sup_{\mathbf{q} \in K}  \sum_{\substack{{\mathbf{n}\in \mathbb{Z}^m\backslash\{\mathbf{0}\}}\\\Psi(\mathbf{n})=j\\ \|\mathbf{n}\|_1 > k}}\left|F_{\mathbf{n}}\mathbf{q}^{\mathbf{n}} \right|.
\end{align*}
This approaches 0 as $k \to \infty$ by the normal convergence of the series \eqref{Fexpression}. By \cite[Corollary 2.2.4]{HormanderSCV}, the series $\tilde{F}$ defined by \eqref{eq:surjtjminus1series} 
defines a function in $\rH^0(\mathcal{S}, \mathcal{O}(\C^m))$ as it is holomorphic and clearly periodic; moreover, this function, because of its series expansion, is in $\mathcal P_j$.
\end{proof}
\end{subequations}

\begin{claim}
Let $\cP$ be any $\mathcal{T}$-module over $\C$ such that $T_i-1$ acts invertibly on $\cP$. Then $T_i-1$ also acts invertibly on $\cP\otimes_{\C} E$.
\end{claim}

\begin{proof}
The action of $T_i-1$ on $\cP \otimes_\C E$ is given by
\[
T_i \otimes T_i - 1 \otimes 1 = (T_i-1) \otimes T_i + 1 \otimes (T_i - 1).
\]
The first term is invertible by assumption, and the second term is nilpotent. Since the two terms commute, the sum is also invertible.
\end{proof}

From the above two claims, we conclude that $T_i-1$ acts invertibly on $\mathcal{P}_i\otimes_{\C}E$ for any $1\leq i\leq m$. 
Now, let $A=\C[\mathcal{T}]\cong \C[T_1^{\pm},\ldots, T_m^{\pm}]$ be the group ring of $\mathcal{T}$. First, $\mathbf{t}:= (T_1-1,T_2-1,\ldots, T_m-1)$ is a regular sequence  in $A$. Recall that this means that $T_1-1$ is a nonzerodivisor in $A$ and, for each $i=2,\dots,m$, the coset of $T_i-1$ is a nonzerodivisor in $A/(T_1-1,\dots,T_{i-1}-1)$; this clearly holds as the latter is isomorphic to $\C[T_i^{\pm}, \dots, T_m^\pm]$. 

Since $A/(T_1-1,\ldots,T_m-1)\cong \C$, we see that the Koszul complex $K(\mathbf t)$ provides a free resolution of $\C$ (readers are referred to \cite[Section~4.5]{Weibel1994} for details).

Thus, the required group cohomology is the homology of $\hom_A(K(\bt), \cP_i \otimes E)$. However, we have the decomposition
\begin{align*}
\hom_A(K(\bt), \cP_i \otimes E) &\cong \left(\bigotimes_{j=1}^m (A \xrightarrow{\cdot (T_j-1)} A) \right)\otimes_A (\cP_i \otimes E)\\
&\cong \left(\bigotimes_{\substack{j=1\\j\neq i}}^{m} (A \xrightarrow{\cdot (T_j-1)} A)\right) \otimes_A (\cP_i \otimes E \xrightarrow{\cdot (T_i-1)} \cP_i \otimes E)
\end{align*}
By invertibility, the homology of $(\cP_i \otimes E \xrightarrow{\cdot (T_i-1)} \cP_i \otimes E)$ is 0, so the result follows from basic properties of tensor products of complexes \cite[Acyclic Assembly Lemma 2.7.3]{Weibel1994}.
\end{proof}

\begin{remark}
If we want to treat $M$ as an algebraic supervariety, then the ring of global functions on $\C^m$ consists of polynomials in $m$ variables and the proof becomes much simpler (there are no non-constant periodic polynomials!) and we sketch it here.
 We want to show that $\rH^0(\cS, \C[z_1,\dots,z_m]) = \C$ and $\rH^i(\cS, \C[z_1,\dots,z_m]) = 0$ for $i>0$.

  First, suppose that $m=1$. Then the group ring $\C[\cS] = \C[S_1^\pm]$ is a Laurent polynomial ring in 1 variable (hence a PID), and so we have the following free resolution for the trivial module $\C$:
  \[
    0\to \C[S_1^\pm] \xrightarrow{\cdot (S_1-1)} \C[S_1^\pm] 
  \]
  where the differential means ``multiply by $S_1-1$''. Now we apply $\hom_{\C[S_1]}(-, \C[z_1])$ to get
  \begin{align*} \label{eqn:m=1complex}
    \C[z_1] \xrightarrow{\phi} \C[z_1], \qquad \phi(f) = f(z_1+1) - f(z_1)
  \end{align*}
  where the left term has cohomological degree 0 and the right term has cohomological degree 1. For $n \ge 0$, define $v_n = \frac{1}{n!} z_1(z_1-1)(z_1-2) \cdots (z_1-(n-1))$. Then $\phi(v_0) = 0$ and $\phi(v_n) = v_{n-1}$ for $n > 0$, so we see that
  \[
    \rH^0(\cS, \C[z_1]) = \ker \phi = \C, \qquad \rH^1(\cS, \C[z_1]) = \coker \phi = 0.
  \]
  For general $m$, we take $m$ copies of the above complex, one for each variable $S_i$, and tensor them together over $\C$. Then by the K\"unneth formula
  \cite[Theorem 3.6.3]{Weibel1994}, we get the desired result.
\end{remark}

\subsection{Description in terms of Lie algebra cohomology} \label{ss:lie-coh}

Let $U,V,U'$ be $\C$-vector spaces of dimensions $m,n,1$ respectively. Pick bases $u_1,\ldots, u_m$ for $U$, $v_1,\ldots,v_n$ for $V$ and $u_{m+1}$ for $U'$.
We have the exterior algebra:
\[
E = \bigwedge^{\bullet}((U\oplus U')\otimes V).
\]
This is the same $E$ as in Section~\ref{ss:group-coh}; in our previous notations, $\alpha_{ij}=u_i\otimes v_j$ and $\theta_i= u_{m+1}\otimes v_j$.  For the sake of uniformity, we will interpret $\alpha_{m+1,j} = \theta_j$ below. It has commuting actions of the groups $\GL(U\oplus U'), \GL(V)$ (the reader may think of these as either algebraic groups or complex Lie groups, it will make no difference) and also of their Lie algebras $\fgl(U \oplus U'), \fgl(V)$ defined in the usual way.

Let us make this explicit for $\fgl(U \oplus U')$. Pick $1 \le i \le m+1$ and $1 \le j \le n$. With respect to our basis, let $x_{i,j}$ be the matrix unit with a 1 in row $i$ and column $j$ and 0's elsewhere. Then $x_{i,j}$ acts on $E$ by the operator
\[
  X_{i,j} = \sum_{k=1}^n \alpha_{ik} \frac{\partial}{\partial \alpha_{jk}}.
\]
This is the usual action of $\fgl(U \oplus U')$ on $(U \oplus U') \otimes V \cong (U \oplus U')^{\oplus n}$ and the usual way to extend Lie algebra actions to tensor / exterior / symmetric products is $x \cdot (a \otimes b) = xa \otimes b + a \otimes xb$ which $X_{i,j}$ satisfies.

We can think of matrices in $\fgl(U \oplus U')$ as block matrices (with block sizes $m,1$). Let $\fp$ be the block upper-triangular matrices and let $\fn$ be the strictly block upper-triangular matrices. Then $\fp$ is a parabolic subalgebra (i.e., contains a Borel subalgebra) and $\fn$ is its nilradical. In this case, $\fn$ is the span of $x_{i,m+1}$ where $i=1,\dots,m$. 

Given pairwise commuting operators $\ba = (a_1,\dots,a_n)$ on $E$, the Koszul complex $\bK(\ba)^\bullet$ for $E$ can be written as $\bK(\ba)^k = \bigwedge^k (\C^n) \otimes E$. Let $e_1,\dots,e_n$ be the standard basis for $\C^n$; for $I = (i_1,\dots,i_k)$, we let $e_I = e_{i_1} \wedge \cdots \wedge e_{i_k}$. The differential $d$ is given by
\[
e_I  \otimes \alpha \mapsto \sum_{j \notin I} e_{(I,j)} \otimes a_j \alpha.
\]

\begin{proposition} \label{prop:koszul-isom}
For each $i\in \{1,\ldots,m\}$, there exists an invertible operator $\phi_i$ on $E$ such that $T_i - 1 = X_{i,m+1} \phi_i$. Furthermore, the $\phi_i$ commute with each other and with $T_j-1$ and $X_{j,m+1}$.

In particular, the Koszul complex on $E$ with respect to $T_1-1,\dots,T_m-1$ is isomorphic to the Koszul complex on $E$ with respect to $X_{1,m+1}, \dots, X_{m,m+1}$. 
\end{proposition}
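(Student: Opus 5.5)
The plan is to show that $T_i$ acts on $E$ as the exponential of $X_{i,m+1}$, and then to factor $T_i-1$ through $X_{i,m+1}$.

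\textbf{Step 1: $T_i=\exp(X_{i,m+1})$.} Note that $T_i$ acts on $E$ by an algebra automorphism. It fixes every $\alpha_{k\ell}$ and sends $\theta_j \mapsto \theta_j + \alpha_{ij}$. The operator $X_{i,m+1} = \sum_k \alpha_{ik}\,\partial/\partial\theta_k$ is an even derivation of $E$. Since each $\alpha_{ik}$ is odd and each $\partial/\partial\theta_k$ is an odd derivation, the product is even, so $X_{i,m+1}$ is a derivation in the ordinary sense. It is locally nilpotent, indeed nilpotent, since $E$ is finite dimensional.

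The exponential of a nilpotent even derivation is an algebra automorphism. On generators, $\exp(X_{i,m+1})$ fixes the $\alpha$'s and sends $\theta_j\mapsto\theta_j+\alpha_{ij}$, because $X_{i,m+1}^2\theta_j = X_{i,m+1}\alpha_{ij}=0$. Two automorphisms that agree on generators agree everywhere, so $T_i=\exp(X_{i,m+1})$ on $E$.

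\textbf{Step 2: defining $\phi_i$.} Set
\[
\phi_i = \sum_{k\ge 0}\frac{X_{i,m+1}^k}{(k+1)!},
\]
which is a finite sum. Then $T_i-1 = X_{i,m+1}\phi_i$. Moreover $\phi_i = 1 + (\text{nilpotent})$, so it is invertible, with an explicit inverse given by the finite series $\frac{x}{e^x-1}$ evaluated at $X_{i,m+1}$.

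\textbf{Step 3: commutation.} The $X_{j,m+1}$ pairwise commute. They are the images of the matrix units $x_{j,m+1}$ with $j\le m$, and these commute in $\fgl(U\oplus U')$ because $x_{j,m+1}x_{k,m+1}=0$ for $j,k\le m$. One can also check directly that $X_{j,m+1}$ kills all $\alpha_{k\ell}$ with $k\le m$, so the derivations commute on generators. Since $\phi_i$ and $T_j-1$ are polynomials in the commuting family $\{X_{k,m+1}\}$, everything commutes.

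\textbf{Step 4: isomorphism of Koszul complexes.} Define $\Phi$ on $\bK^k = \bigwedge^k(\C^m)\otimes E$ by
\[
e_I\otimes\alpha \mapsto e_I\otimes \phi_I\,\alpha, \qquad \phi_I=\prod_{i\in I}\phi_i .
\]
We claim $\Phi$ is a chain map from the $X$-complex to the $(T-1)$-complex. In one direction,
\[
d_{T}\Phi(e_I\otimes\alpha)=\sum_{j\notin I} e_{(I,j)}\otimes (T_j-1)\phi_I\alpha = \sum_{j\notin I} e_{(I,j)}\otimes \phi_{I\cup j}X_{j,m+1}\alpha,
\]
using the commutation from Step 3. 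In the other direction,
\[
\Phi d_X(e_I\otimes\alpha) = \sum_{j\notin I} e_{(I,j)}\otimes\phi_{I\cup j}X_{j,m+1}\alpha,
\]
which agrees. Each component of $\Phi$ is invertible because each $\phi_i$ is, so $\Phi$ is an isomorphism of complexes.

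The only mildly delicate point is the sign and evenness check in Step 1, namely that $X_{i,m+1}$ is an honest even derivation, so that its exponential is multiplicative. Everything after that is formal.
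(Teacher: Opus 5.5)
Your proof is correct and follows essentially the paper's route: the same $\phi_i=\sum_{k\ge 0}X_{i,m+1}^k/(k+1)!$, commutativity coming from the pairwise-commuting $X_{j,m+1}$, and a diagonal rescaling of the Koszul terms by products of the $\phi$'s. The differences are cosmetic. You prove $T_i=\exp(X_{i,m+1})$ directly via the even-derivation argument, where the paper cites Kim--Rabin--Rhoades. You also rescale by $\prod_{i\in I}\phi_i$ from the $X$-complex to the $(T-1)$-complex, instead of the paper's $\prod_{j\notin I}\phi_j$ in the other direction; the two maps differ only by the global commuting automorphism $\prod_{j=1}^m\phi_j$.
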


\begin{proof}
As observed in \cite[Proof of Proposition 3.2]{RabinRhoadesKim2023}, we have 
\[
T_i - 1 = \sum_{k \ge 1} \frac{X_{i,m+1}^k}{k!} = X_{i,m+1} \left(1 + \sum_{k \ge 1} \frac{X_{i,m+1}^k}{(k+1)!} \right).
\]
Here the infinite sums are actually finite since $X_{i,m+1}$ is nilpotent on $E$. In particular, $\sum_{k \ge 1} \frac{X^k_{i,m+1}}{(k+1)!}$ is also nilpotent, and we define $\phi_i$ to be this sum plus 1. Commutativity follows since the $X_{j,m+1}$ pairwise commute.

For the second statement, we define an isomorphism 
\[
\Phi \colon \bK(T_1-1,\dots,T_n-1)^\bullet \to \bK(X_{1,m}, \dots, X_{m,m+1})^\bullet, \quad e_I \otimes \alpha \mapsto e_I \otimes (\prod_{j \notin I} \phi_j )\alpha.
\]
To see that $\Phi$ is a chain map:
\begin{align*}
\Phi( d ( e_I \otimes \alpha)) &= \Phi (\sum_{j \notin I} e_{(I,j)} \otimes (T_j-1)\alpha)\\
&= \sum_{j \notin I} e_{(I,j)} \otimes (\prod_{k \notin (I,j)} \phi_k )(T_j-1)  \alpha\\
&= \sum_{j \notin I} e_{(I,j)} \otimes X_{j,m+1} (\prod_{k \notin I} \phi_k) \alpha\\
&= d (e_I \otimes (\prod_{k \notin I} \phi_k) \alpha)\\
&= d (\Phi(e_I \otimes \alpha)) . \qedhere
\end{align*}
\end{proof}

\begin{corollary}\label{Algebraic-Description}
    $\rH^i(M,\cO_M) \cong \rH^i(\mathcal{T},E)\cong \rH^i(\fn, E)$.
\end{corollary}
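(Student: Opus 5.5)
The first isomorphism $\rH^i(M,\cO_M)\cong \rH^i(\cT,E)$ is exactly Proposition~\ref{prop:HM=HTE}, so the work is entirely in the second isomorphism $\rH^i(\cT,E)\cong \rH^i(\fn,E)$. The plan is to show that both sides are computed by Koszul complexes on $E$, and then connect these with Proposition~\ref{prop:koszul-isom}.

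On the group side, I will reuse the argument at the end of the proof of Lemma~\ref{indepofz}. The sequence $\bt=(T_1-1,\dots,T_m-1)$ is regular in $A=\C[\cT]$ and $A/(\bt)\cong\C$, so the Koszul complex $K(\bt)$ is a free resolution of the trivial module $\C$. Hence $\rH^i(\cT,E)$ is the cohomology of $\hom_A(K(\bt),E)$. Unwinding the definitions, this is the cochain complex whose degree $k$ term is $\bigwedge^k(\C^m)\otimes E$, with differential $e_I\otimes\alpha\mapsto\sum_{j\notin I}e_{(I,j)}\otimes(T_j-1)\alpha$. This is precisely $\bK(T_1-1,\dots,T_m-1)^\bullet$. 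The one point requiring care is the sign convention for the dual Koszul complex; it only affects the differential by signs, which can be absorbed by rescaling basis elements $e_I$, so cohomology is unchanged.

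On the Lie algebra side, $\fn$ is spanned by $x_{1,m+1},\dots,x_{m,m+1}$, and these commute: for $i,j\le m$ we have $[x_{i,m+1},x_{j,m+1}]=0$, since the product of the matrix units is zero. So $\fn$ is an abelian Lie algebra of dimension $m$. The Chevalley--Eilenberg complex $\hom(\bigwedge^k\fn,E)\cong\bigwedge^k(\fn^*)\otimes E$ has differential
\[
(d\omega)(y_0,\dots,y_k)=\sum_s(-1)^s y_s\,\omega(\dots,\hat y_s,\dots),
\]
because every bracket term vanishes. Identifying the dual basis of $\fn^*$ with $e_1,\dots,e_m$, this becomes $e_I\otimes\alpha\mapsto\sum_{j\notin I}e_{(I,j)}\otimes X_{j,m+1}\alpha$ (up to the same sign normalization), which is $\bK(X_{1,m+1},\dots,X_{m,m+1})^\bullet$. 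Equivalently, one can observe that $U(\fn)$ is a polynomial ring and use its Koszul resolution of $\C$, mirroring the group computation.

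Proposition~\ref{prop:koszul-isom} gives an isomorphism of these two Koszul complexes, so their cohomologies agree, and this yields the corollary. The only step that needs attention is checking that the sign conventions in the three complexes (the dual Koszul complex for $\cT$, the Chevalley--Eilenberg complex, and the ordered wedge $e_{(I,j)}$ used in Proposition~\ref{prop:koszul-isom}) match. Since each is the standard exterior-algebra sign, at worst they differ by a consistent sign twist on $e_I$, which gives an isomorphism of complexes. I do not expect any genuine obstacle.
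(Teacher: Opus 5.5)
Your proposal is correct and follows the paper's argument: Proposition~\ref{prop:HM=HTE} gives the first isomorphism, and for the second you identify $\rH^\bullet(\cT,E)$ and $\rH^\bullet(\fn,E)$ with the Koszul complexes on the $T_j-1$ and on the $X_{j,m+1}$ (the latter via the Chevalley--Eilenberg complex of the abelian $\fn$), then invoke Proposition~\ref{prop:koszul-isom}. Your extra attention to sign conventions is harmless, since changing the signs of the differentials leaves their kernels and images unchanged.
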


\begin{proof}
We have
\begin{align*}
\rH^i(\bK(T_1-1,\dots,T_n-1)^\bullet) &= \rH^i(\cT, E),\\
\rH^i(\bK(X_{1,m+1}, \dots, X_{m,m+1})^\bullet) &= \rH^i(\fn, E);
\end{align*}
the second description comes from the Chevalley--Eilenberg complex since $\fn$ is abelian.
\end{proof}

\subsection{Irreducible decomposition of cohomology}\label{ss:irred_decomp}

Fix $0\leq p\leq m$. For a partition $\lambda =(\lambda_1,\ldots,\lambda_{m+1}) \in\mathbb{Z}_{\geq 0}^{m+1}$, we define
\begin{equation*}
    w_p\lambda = (\lambda_1, \dots, \lambda_{m-p}, \underbrace{\lambda_{m-p+2} - 1, \dots, \lambda_{m+1}-1}_{\text{$p$ terms}}, \lambda_{m-p+1} + p).
\end{equation*}
In words: we move the term $\lambda_{m-p+1}$ to the end, add $p$ to it, and subtract $1$ from all terms that we moved it past.

\begin{theorem}\label{Irreducible-Decomposition}
There is an isomorphism  
\begin{align*}
  \rH^p(M,\mathcal{O}_M) \cong \bigoplus_\lambda \bS_{((w_p\lambda)_1,\dots,(w_p\lambda)_m)}(U)\otimes \bS_{(w_p\lambda)_{m+1}}(U') \otimes \bS_{\lambda^T}(V)
\end{align*}
as $\mathfrak{gl}(U)\times \mathfrak{gl}(U')$-modules, where the sum is over all partitions $\lambda = (\lambda_1,\dots,\lambda_{m+1})$ with $\lambda_1 \le n$.
\end{theorem}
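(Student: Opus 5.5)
By Corollary~\ref{Algebraic-Description} we have $\rH^p(M,\cO_M)\cong \rH^p(\fn,E)$. This identification is compatible with the action of the Levi subalgebra $\fgl(U)\times\fgl(U')\times\fgl(V)$: it normalizes the abelian nilradical $\fn$, so it acts on the Chevalley--Eilenberg complex $\bigwedge^\bullet\fn^*\otimes E$, and this complex is exactly the Koszul complex $\bK(X_{1,m+1},\dots,X_{m,m+1})$. So the plan is to decompose $E$ as a $\fgl(U\oplus U')\times\fgl(V)$-module and then apply Kostant's theorem to each isotypic piece.

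First, apply the skew Cauchy formula:
\[
E=\bigwedge^\bullet((U\oplus U')\otimes V)\cong\bigoplus_\lambda \bS_\lambda(U\oplus U')\otimes\bS_{\lambda^T}(V),
\]
where $\lambda$ runs over partitions with at most $m+1$ rows and at most $n$ columns, i.e.\ $\lambda_1\le n$. Because $\fn$ acts only on the first factor, we get
\[
\rH^p(\fn,E)\cong\bigoplus_\lambda \rH^p(\fn,\bS_\lambda(U\oplus U'))\otimes\bS_{\lambda^T}(V)
\]
as $\fgl(U)\times\fgl(U')\times\fgl(V)$-modules.

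Next, apply Kostant's theorem to the parabolic $\fp\subset\fgl(m+1)$ with Levi $\fgl(m)\times\fgl(1)$. It states that
\[
\rH^p(\fn,\bS_\lambda)\cong\bigoplus_{w\in W^{\fp},\ \ell(w)=p} L_{w\cdot\lambda},
\]
where $w\cdot\lambda=w(\lambda+\rho)-\rho$, the sum runs over minimal coset representatives for $S_m\backslash S_{m+1}$, and $L_\mu$ denotes the irreducible Levi module of highest weight $\mu$. Some care is needed with conventions here. Since cohomology of $\fn$ (as opposed to homology, or cohomology of the opposite nilradical) is taken, the minimal representatives must be those making $w\cdot\lambda$ dominant for the Levi. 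With $\fn$ the upper-right block, I expect these to be the cycles that move the entry in position $m-p+1$ to the last slot. I will sanity-check this with $p=0$: then $\rH^0(\fn,\bS_\lambda)$ is the $\fn$-invariants, which form the Levi module of highest weight $\lambda$ itself, with $\lambda_{m+1}$ in the $U'$ slot. That matches $w_0\lambda=\lambda$.

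The representatives form a single coset chain of $m+1$ elements, one of each length $0,\dots,m$, so there is exactly one of length $p$. I will compute $w\cdot\lambda$ directly with $\rho=(m,m-1,\dots,0)$. Moving $\lambda_{m-p+1}+\rho_{m-p+1}$ to the last position and subtracting $\rho$ gives $\lambda_{m-p+1}+p$ in the last slot. Each entry it passes shifts up one position, so it loses $1$. The result is precisely $w_p\lambda$.

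The first $m$ entries of $w_p\lambda$ are weakly decreasing, since $\lambda_{m-p}\ge\lambda_{m-p+2}-1$. They are also nonnegative whenever the Schur functor is nonzero. Hence $L_{w_p\lambda}=\bS_{((w_p\lambda)_1,\dots,(w_p\lambda)_m)}(U)\otimes\bS_{(w_p\lambda)_{m+1}}(U')$. One point to handle: entries equal to $-1$ occur only when $\lambda_{m+1}=0$, in which case the $\fgl(U)$ weight is not polynomial but still dominant. This should be treated as $\det^{-1}$ twisting, or observed to be consistent with the stated formula by reading $\bS$ as the irreducible of that dominant weight.

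Vanishing for $p>m$ follows because $\dim\fn=m$. The main obstacle is fixing the Kostant conventions (upper versus lower nilradical, left versus right cosets) so that the length-$p$ representative is exactly the cycle described. I would verify this on $m=1$ against the known answer from \cite{RabinRhoadesKim2023}.
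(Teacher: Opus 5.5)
Your proposal is correct and is essentially the paper's proof: decompose $E$ by the skew Cauchy identity, note that $\fn$ acts only on the $\bS_\lambda(U\oplus U')$ factors, and apply Kostant's theorem for the parabolic with Levi $\fgl(U)\times\fgl(U')$; the only difference is that you work out the length-$p$ minimal coset representative and $w\cdot\lambda$ by hand, whereas the paper cites \cite{kostantrho} for this combinatorics. Of your two readings of the negative entries, the second is the right one, and the sentence claiming the first $m$ entries are nonnegative should be dropped: the $-1$'s genuinely occur, e.g.\ for $\lambda=0$ and $p=m$ one gets $\bigwedge^m\fn^*\cong\det(U)^{-1}\otimes (U')^{\otimes m}$, matching $w_m(0)=(-1,\dots,-1,m)$.
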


\begin{proof}
The Cauchy identity (see, for instance, \cite[Corollary 2.3.3]{weyman}) gives a $\fgl(U \oplus U') \times \fgl(V)$ equivariant isomorphism of $E$ into Schur functors
\[
  E \cong \bigoplus_\lambda \bS_\lambda(U \oplus U') \otimes \bS_{\lambda^T}(V).
\]
Since $\fn \subseteq \fgl(U\oplus U')$ acts trivially on $\bS_{\lambda^T}(V)$, we get
\[
    \rH^p(\fn,E)\cong \bigoplus_{\lambda}\rH^p(\fn,\bS_{\lambda}(U\oplus U')) \otimes \bS_{\lambda^T}(V)
\]
Since $\bS_{\lambda}(U\oplus U')$ is an irreducible representation of $\fgl(U\oplus U')$, we can use Kostant's theorem to compute the Lie algebra cohomology. The relevant combinatorics is worked out in \cite{kostantrho}. More precisely, \cite[Theorem 5.1]{kostantrho} contains the statement for Lie algebra homology for $\fn_- \cong \fn^*$ which is equivalent to Lie algebra cohomology for $\fn$, while \cite[\S 6.1, Lemma 6.3]{kostantrho} specializes to our situation.
\end{proof}

\begin{remark}
Although Kostant's theorem is generally stated for semisimple Lie algebras, we can extend its statement to $\mathfrak{gl}_n$: all of the representations we deal with come from the group ${\bf GL}_n$, so the center acts semisimply. Keeping track of the action of the center allows us to identify weights as elements of $\mathbb{Z}^n$ rather than cosets of the subgroup spanned by $(1,\dots,1)$. Now, the center acts trivially on the subalgebra $\mathfrak{n}$, and hence acts by a constant on the Koszul complex of $\mathfrak{n}$ tensored with $\bS_\lambda(U)$. Hence, for Kostant's theorem, we can restrict to $\mathfrak{sl}_n$ to cite the literature, but then note that all weights appearing in cohomology must have the same size as the input weight $\lambda$. 
\end{remark}

\begin{remark} \label{rmk:m=1}
Specializing to the case $m=1$, we see that the theorem above gives (for $p=0,1$)
\[\text{dim}\; \rH^p(M, \cO_M) = \sum_{\lambda} \text{dim}\; \bS_{\lambda^{T}}(V)
\] where the sum is over all partitions $\lambda = (i,j)$ with $j\leq i\leq n$. Let $s_{\lambda}(x_1,x_2,\ldots, x_n)$ denote the character of $\GL_n(\C)$-representation $\bS_{\lambda}(\C^n)$ and $e_k(x_1,x_2,\ldots, x_ n)$ denote the character of $\GL_n(\C)$-representation $\Lambda^k(\C^n)$. By the Jacobi--Trudi identity \cite[Corollary 7.16.2]{EnumComb2} applied to the partition $\lambda^{T}$, we get
\[
s_{\lambda^{T}} = \text{det} (e_{\lambda_k-k+l})_{1\leq k,l\leq 2} = \text{det} \begin{pmatrix}
            e_{\lambda_1} & e_{\lambda_1+1}\\
            e_{\lambda_2-1} & e_{\lambda_2}
        \end{pmatrix} = e_ie_j-e_{i+1}e_{j-1}.
\]
Thus, we get 
\[
\text{dim}\; \bS_{\lambda^T}(\C^n)= s_{\lambda^T}(1^n)=
\binom{n}{i}\binom{n}{j}-\binom{n}{i+1}\binom{n}{j-1}
\] recovering the dimension results in \cite[Theorem 3.3, Theorem 3.7]{RabinRhoadesKim2023}.
\end{remark}

\subsection{Duality}\label{ss:duality}

Since $M$ is a compact supermanifold, by \cite{HW87}, we have Serre duality 
\begin{equation*}
    \rH^p(M,\mathcal{O}_M^{\vee}\otimes \Ber_M )\otimes_{\C} \rH^{m-p}(M,\mathcal{O}_M) \rightarrow \rH^m(M,\Ber_M)\xrightarrow[]{t} \C
\end{equation*}
giving rise to an isomorphism (using triviality of $\Ber_M$)
\begin{equation*}
\rH^p(M,\mathcal{O}_M) \cong \rH^{m-p}(M,\mathcal{O}_M)^\vee.
\end{equation*}

On the other hand, we have Poincar\'e duality on group cohomology.  The main result of this section is that these two notions of duality are compatible with each other.

\begin{theorem} \label{thm:duality-compat}
    There is a diagram
       \[
        \begin{tikzcd}
            \rH^i(M,\mathcal{O}_M) \times \rH^{m-i}(M,\mathcal{O}_M)\arrow{d}\arrow{r} & \C \\
            \rH^i(\mathcal{T},E) \times \rH^{m-i}(\mathcal{T},E)\arrow{ur}
        \end{tikzcd}
    \] 
    which commutes up to a non-zero scalar.
\end{theorem}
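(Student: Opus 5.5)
The plan is to factor Serre duality through the bosonic torus $M_{\bos}$ and then transport it to group cohomology, where it becomes a cup product followed by evaluation of a ``top coefficient''. First I will define the \emph{top coefficient} map $\gamma\colon \cO_M\to\cO_{M_{\bos}}$. On the covering $\C^{m|n}$, a function $f=\sum_I f_I(z)\,\mu_I$ (where the $\mu_I$ are monomials in the $\alpha_{ij}$ and $\theta_j$) is sent to its coefficient of the full product $\prod\alpha_{ij}\prod_j\theta_j$. The translations $S_i,T_i$ shift $\theta_j$ by constants $\alpha_{ij}$. Such a shift can only turn the top monomial into itself plus terms that already contain the factor $\alpha_{ij}$, which then vanish against the full product. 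So $\gamma$ is $G$-equivariant and descends to a morphism of sheaves $\cO_M\to\cO_{M_{\bos}}$.

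Using Lemma~\ref{BerTrivial} and Theorem~\ref{adjunction}, together with the fact that $\Omega^m_{M_{\bos}}$ is trivialized by $dz_1\wedge\cdots\wedge dz_m$, I will show that the Serre trace $t\colon \rH^m(M,\Ber_M)\to\C$ equals, up to a nonzero scalar, the composite
\[
\rH^m(M,\cO_M)\xrightarrow{\gamma}\rH^m(M_{\bos},\cO_{M_{\bos}})\cong \rH^m(M_{\bos},\Omega^m_{M_{\bos}})\xrightarrow{\mathrm{tr}}\C .
\]
This is the standard statement that Berezin integration picks out the top $\theta$-coefficient. Here the relevant odd generators over $\C$ are all $N=mn+n$ of them, which is why $\gamma$ uses the full product including the $\alpha$'s. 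Since Serre duality pairs classes by cup product followed by $t$, the top row of the diagram becomes $(a,b)\mapsto \mathrm{tr}(\gamma(a\cup b))$.

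Next I will check two compatibilities with the isomorphisms $\rH^\bullet(M,\cO_M)\cong \rH^\bullet(G,N)\cong\rH^\bullet(\cT,E)$. First, the Mumford comparison isomorphism is compatible with cup products, since it comes from a Čech/equivariant double complex with multiplicative structure. Second, the Hochschild--Serre step and the projection onto the $\cP_0$-summand in Lemma~\ref{indepofz} are multiplicative: the inclusion $E\hookrightarrow N$ is a $G$-equivariant ring map, and its induced map on cohomology is the inverse isomorphism. With this, $a\cup b$ corresponds to the cup product in $\rH^m(\cT,E)$. Naturality in the coefficient map $\gamma\colon E\to\C$ (the top-coefficient functional, which is $\cT$-equivariant onto the trivial module) then identifies $\gamma(a\cup b)$ with the image in $\rH^m(\cT,\C)\cong\rH^m(M_{\bos},\cO_{M_{\bos}})$. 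That last isomorphism is the analogous one for the bosonic torus, where $\rH^m(\cT,\C)=\C$ is the orientation class of $\cT\cong\mathbb{Z}^m$. The diagonal arrow is therefore $(a,b)\mapsto \langle \gamma_*(a\cup b),[\cT]\rangle$, which is Poincaré duality for $\cT$ with coefficients paired through $\gamma$. The two paths agree up to the scalar relating $\mathrm{tr}$ to the fundamental class.

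I expect the main obstacle to be the multiplicativity of the chain of isomorphisms. The group-cohomology side is a Koszul complex and is not obviously a dga in a way compatible with the Čech cup product, and the analytic decomposition $\bigoplus_i\cP_i$ of Lemma~\ref{indepofz} must be compatible with products. I would handle this by working throughout with the map induced by the ring inclusion $E\to N$ (rather than the projection). That map is visibly multiplicative on standard cochains, and the earlier results show it is an isomorphism, which avoids tracking the Fourier decomposition at all.
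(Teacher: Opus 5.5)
Your proposal is correct and follows essentially the paper's route. It uses the top-coefficient map $\gamma$, factors the Serre trace through $M_{\bos}$, and transports cup products along Mumford's isomorphism and the inflation/inclusion maps induced by $E\hookrightarrow \rH^0(\cS,N)\subseteq N$ (not the non-multiplicative Fourier projection), with the last square commuting up to a scalar because every space in it is one-dimensional. The one step you call ``standard'' is where the paper does real work, and Lemma~\ref{BerTrivial} and Theorem~\ref{adjunction} alone do not give it: the paper identifies $\Ber_M\cong\sHom_{\cO_{M_{\bos}}}(\cO_M,\omega_{M_{\bos}})$, so that nondegeneracy of the pairing defined by $\mathrm{tr}\circ\rH^m(\gamma)$ reduces to Serre duality on $M_{\bos}$ (Proposition~\ref{Ber-supertorus}(3) and Lemma~\ref{trace}).
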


To prove the theorem, we need some preparations. We first construct a morphism of sheaves of abelian groups $\gamma \colon \cO_M\to \cO_{M_{\bos}}$ that extracts the top coefficient of the product of all odd variables, informally called the \defi{top coefficient}. In the following, we will regard $M$ as a supermanifold over $\C$, whose odd dimension is $N:=mn+n$, so that its dimension is $m|N$.

First, consider the morphism
\[
    \ol{\gamma} \colon \pi_*\cO_{\C^{m|n}}\to \pi_*\mathcal J^N_{\C^{m|n}},\quad f = \sum_\epsilon \theta^{\epsilon_1}_1 \cdots \theta^{\epsilon_N}_N\cdot f^{\epsilon_1,\dots,\epsilon_N}\mapsto \theta_1\cdots \theta_N\cdot f^{1,\dots,1}
\]
where $\pi \colon \C^{m|n}\to M$ is the covering map and the sum is over all choices of  $\epsilon_i = 0,1$. 
We claim that it descends to a morphism $\cO_M \to \mathcal J_M^N$.

\begin{lem}\label{lem:top_coeff}
    There is a commutative diagram
    \[
        \begin{tikzcd}
            \cO_M\arrow{r}\arrow{d} & \mathcal J_M^N\arrow{d}\\
            \pi_*\cO_{\C^{m|n}}\arrow{r}{\ol{\gamma}} & \pi_*\mathcal J_{\C^{m|n}}^N
        \end{tikzcd}.
    \]
\end{lem}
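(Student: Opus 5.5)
The plan is to show that $\ol{\gamma}$ is $G$-equivariant. Taking $G$-invariants then produces the top horizontal arrow. Recall $\cO_M = (\pi_*\cO_{\C^{m|n}})^G$, and similarly $\mathcal J_M^N = (\pi_*\mathcal J^N_{\C^{m|n}})^G$, with the vertical maps being the inclusions of invariants. So it suffices to check that, for every $g\in G$ and every local section $f$ of $\pi_*\cO_{\C^{m|n}}$, we have $\ol{\gamma}(g\cdot f) = g\cdot \ol{\gamma}(f)$. Given this, $\ol{\gamma}$ sends invariants to invariants, and the restriction is the desired morphism making the square commute. One point needs checking first: that $\mathcal J_M^N$ agrees with the invariants of $\pi_*\mathcal J^N_{\C^{m|n}}$. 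Over $\C$, the odd generators are the $\theta_j$ together with the $\alpha_{ij}$, and $\mathcal J^N$ is the sheaf $\theta_1\cdots\theta_n\prod\alpha_{ij}\cdot\cO_{\C^m}$. Since $\mathcal J_M$ is the ideal of odd elements, this identification is a local statement. Locally $\pi$ is an isomorphism, so the identification holds chart by chart.

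It is enough to verify equivariance on the generators $S_i, T_i$. For $S_i$ the action only shifts $z_i \mapsto z_i+1$ and fixes all odd generators. It therefore acts coefficientwise on the expansion $f=\sum_\epsilon \theta^\epsilon f^\epsilon$, and clearly commutes with extracting $f^{1,\dots,1}$. For $T_i$ the even coordinates shift by $\btau_i$, which again acts coefficientwise. The odd generators transform as $\theta_j\mapsto \theta_j+\alpha_{ij}$ with the $\alpha$'s fixed. Consider a monomial $\theta^{\epsilon}$ in all $N$ odd generators. Under $T_i$ it becomes a sum consisting of $\theta^\epsilon$ itself plus terms in which some $\theta_j$ is replaced by $\alpha_{ij}$. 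Such a replaced term either vanishes, because $\alpha_{ij}$ already appears, or is a monomial of the same degree involving fewer $\theta$'s. In particular, no monomial other than the top one $\theta_1\cdots\theta_N$ can map onto a nonzero multiple of the top monomial. To see this, note that a substitution preserves the degree $|\epsilon|$. A term of full degree $N$ must contain every $\alpha_{ij}$ and every $\theta_j$. Substituting some $\theta_j\mapsto\alpha_{ij}$ in a degree-$N$ monomial would produce a repeated $\alpha_{ij}$ and hence zero. Monomials of lower degree stay of lower degree. Meanwhile the top monomial maps to itself. Hence the top coefficient of $T_i f$ is $T_i(f^{1,\dots,1})$, the coefficient translated in $z$, and so $\ol\gamma(T_if)=T_i\ol\gamma(f)$.

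The main subtlety is the bookkeeping. The substitutions act on the Grassmann variables while the translation acts on the holomorphic coefficients, and the signs coming from reordering odd variables must not spoil the argument. I will handle this by noting that the action is a superalgebra automorphism that fixes the parity of every generator. Since the top monomial maps exactly to itself with no sign, the coefficient extraction commutes with the action. Finally, compatibility with restrictions makes $\ol\gamma$ a morphism of sheaves, which completes the construction of the diagram.
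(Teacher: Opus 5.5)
Your proposal is correct and follows essentially the same route as the paper: both show that extracting the top coefficient commutes with the $G$-action because $\theta_1\cdots\theta_N$ is $G$-invariant, and then restrict to $G$-invariant sections.

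You also make explicit two points the paper leaves implicit. First, the substitution $\theta_j\mapsto\theta_j+\alpha_{ij}$, together with the purely numerical shift $z\mapsto z+\btau_i$ (here $\tau_{ij}\in\C$ matters), preserves odd degree, so no lower monomial contributes to the top coefficient. Second, $\mathcal J_M^N$ agrees with the invariants of $\pi_*\mathcal J^N_{\C^{m|n}}$ locally via the covering.
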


\begin{proof}
    Since $\mathcal O_M = \pi_*\cO_{\C^{m|n}}^G$, the vertical morphisms are inclusions of $G$-invariant elements. Assume $f = \sum_\epsilon \theta^{\epsilon_1}_1 \cdots\theta^{\epsilon_N}_N \cdot f^{\epsilon_1,\dots,\epsilon_N}$ is a $G$-invariant element. Note that under the $G$-action, the top coefficient of $g\cdot f$ is $g\cdot f^{1,\dots,1}$, since $\theta_1\cdots\theta_N$ is $G$-invariant. This shows that the top horizontal morphism is well-defined.
\end{proof}

We will define $\gamma \colon \cO_M\to \mathcal J_M^N\to \cO_{M_{\bos}}$ as the composition of the projection with the adjunction morphism in Theorem~\ref{adjunction}. In local coordinates, we have
\[
    \gamma(\sum_\epsilon \theta^{\epsilon_1}_1 \cdots \theta^{\epsilon_N}_N \cdot f^{\epsilon_1,\dots,\epsilon_N}) = f^{1,\dots,1}(z_1,\dots,z_m).
\]

Let $M$ be a projected supermanifold with projection $p \colon M\rightarrow M_{\bos}$. In the following proposition and lemma, we will write $\cO_M$ in place of $p
_*\cO_M$ everywhere (as sheaves of abelian groups, these are the same since $p$ is the identity map of topological spaces, however, $p_*\cO_M$ has a $\cO_{M_{\bos}}$-structure). We will view $\cO_M$ as a $\cO_{M_{\bos}}$-module this way. Note that the morphism $\cO_M\rightarrow \cJ_M^N$ is $\cO_{M_{\bos}}$-linear (and so is $\gamma)$.
\begin{proposition}\label{Ber-supertorus}
    Let $M$ be the complex supertorus with projection $p \colon M\rightarrow M_{\bos}$. 
    \begin{enumerate}
        \item There are isomorphisms of $\cO_M$-modules
        \[
            \Ber_M\cong p^*(\omega_{M_{\bos}}\otimes_{\cO_{M_{\bos}}} \Theta^\vee)
            \cong \sHom_{\cO_{M_{\bos}}}(\cO_M, \omega_{M_{\bos}}).
        \]

        \item There is a morphism of $\cO_{M_{\bos}}$-modules
        \[
            \tilde{\gamma} \colon \Ber_M\cong \sHom_{\cO_{M_{\bos}}}(\cO_M, \omega_{M_{\bos}})
            \to \omega_{M_{\bos}}
        \]
        which on any open $U\subseteq M_{\bos}$ is given by
        \[
             \operatorname{Hom}_{\cO_{M_{\bos}}|_U}(\cO_M|_{U}, \omega_{M_{\bos}}|_{U})
            \rightarrow \omega_{M_{\bos}}(U),
            \quad
            f \mapsto f(U)(1)
        \]
        where for $f \colon \cO_M|_U\to \omega_{M_{\bos}}|_U$, denote by
        $f(U)\colon \cO_M(U)\to \omega_{M_{\bos}}(U)$ the corresponding morphism
        between the sections over $U$.

        \item The trace map of $M$ factors through the trace map of $M_{\bos}$:
        if $(\omega_{M_{\bos}},t)$ is a dualizing sheaf for $M_{\bos}$, then
        $(\Ber_M,t\circ \rH^m(\tilde{\gamma}))$ is a dualizing sheaf for $M$.
    \end{enumerate}
\end{proposition}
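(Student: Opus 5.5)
We will prove the three parts in order, working over $\C$ so that $M$ has dimension $m|N$ with $N=mn+n$. Since $M$ is projected, $\cO_M$ is a locally free $\cO_{M_{\bos}}$-module. In fact, because all transition maps are translations, $\cO_M\cong \cO_{M_{\bos}}\otimes_\C \bigwedge^\bullet(\C^N)$ locally. Moreover, the odd coordinates $\theta_1,\dots,\theta_N$ (including the $\alpha_{ij}$) transform by adding constants.

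\textbf{Part (1).} First, $\cF_M=\cJ_M/\cJ_M^2$ is spanned locally by the classes of the $\theta_k$. These classes are unchanged under the transitions, since $\theta_k\mapsto\theta_k+\gamma_k$ with $\gamma_k$ a constant in $\cJ$, and $\gamma_k\equiv 0$ mod $\cJ^2$ only when $\gamma_k$ is odd-degree $\ge 3$. Here we need care: over $\C$, $\alpha_{ij}$ is itself a coordinate, so $\theta_k+\alpha_{ij}$ changes the class by the class of $\alpha_{ij}$. That is still a constant change of basis with unipotent matrix, so $\Theta=\det\cF_M$ is trivial, with $\theta_1\cdots\theta_N$ a global generator of $\cJ_M^N$. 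Also $\omega_{M_{\bos}}$ is trivial on the torus, via $dz_1\wedge\cdots\wedge dz_m$. Hence both $p^*(\omega_{M_{\bos}}\otimes\Theta^\vee)$ and $\Ber_M$ (Lemma~\ref{BerTrivial}) are isomorphic to $\cO_M$, with generators matching $\mathcal D_\alpha$.

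For the second isomorphism, we use the pairing $\cO_M\otimes_{\cO_{M_{\bos}}}\cO_M\to \cJ^N_M\cong\omega_{M_{\bos}}$, given by $(f,g)\mapsto \gamma(fg)\,dz$. This is the Berezin pairing on an exterior algebra. It is perfect fiberwise, because the top-coefficient pairing on $\bigwedge^\bullet(\C^N)$ is nondegenerate. It is $G$-invariant by Lemma~\ref{lem:top_coeff}. Therefore $f\mapsto(g\mapsto \gamma(fg)dz)$ gives an isomorphism of $\cO_M$-modules $\cO_M\to\sHom_{\cO_{M_{\bos}}}(\cO_M,\omega_{M_{\bos}})$, where $\cO_M$ acts on the target by precomposition with multiplication. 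Composing with $\psi^{-1}$ identifies $\Ber_M$ with the $\sHom$. We then check that $\mathcal D_\alpha\cdot\theta_1\cdots\theta_N f$ corresponds to the functional $g\mapsto \gamma(\theta_1\cdots\theta_N fg)dz$, which agrees with Theorem~\ref{adjunction}.

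\textbf{Part (2).} Evaluation at $1$ is visibly $\cO_{M_{\bos}}$-linear and compatible with restrictions, so it defines a sheaf morphism $\tilde\gamma$. Under the identification above, $\tilde\gamma$ is $f\mapsto\gamma(f)\,dz$.

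\textbf{Part (3).} This is the main obstacle. We argue by relative duality for the finite morphism $p$, which is the identity on topological spaces. For a locally free $\cO_M$-module $\cE$, we have
\[\hom_{\cO_M}(\cE,\sHom_{\cO_{M_{\bos}}}(\cO_M,\omega_{M_{\bos}}))\cong \hom_{\cO_{M_{\bos}}}(\cE,\omega_{M_{\bos}}),\]
which is the usual $p_*\dashv p^!$ adjunction with $p^!\omega=\sHom(\cO_M,\omega)$. This isomorphism is induced by composing with evaluation at $1$, i.e.\ with $\tilde\gamma$. Serre duality on $M_{\bos}$ with trace $t$ identifies the right-hand side with $\rH^m(M_{\bos},\cE)^\vee=\rH^m(M,\cE)^\vee$. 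Tracing through these identifications, the pairing on $M$ becomes $\rH^m(M,\Ber_M)\xrightarrow{\rH^m(\tilde\gamma)}\rH^m(M_{\bos},\omega)\xrightarrow{t}\C$. The subtle points are the naturality in $\cE$ and the passage to all $\rH^i$ and $\mathrm{Ext}$, which we handle with locally free resolutions or by restricting to locally free $\cE$. By uniqueness of dualizing pairs up to isomorphism (\cite{HW87}), $(\Ber_M,t\circ\rH^m(\tilde\gamma))$ is a dualizing sheaf.
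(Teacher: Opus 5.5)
Your proposal is correct. Parts (2) and (3) follow the paper's argument. The paper also reduces (3) to the tensor--hom adjunction $\hom_{\cO_M}(\sF,\sHom_{\cO_{M_{\bos}}}(\cO_M,\omega_{M_{\bos}}))\cong\hom_{\cO_{M_{\bos}}}(\sF,\omega_{M_{\bos}})$, notes that it is induced by $\tilde\gamma$ (evaluation at $1$), and then applies Serre duality on $M_{\bos}$.

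The real difference is in part (1).
\begin{itemize}
\item The paper cites Noja's theorem for arbitrary projected supermanifolds to get $\Ber_M\cong p^*(\omega_{M_{\bos}}\otimes\Theta^\vee)$. It then uses the perfect pairing $\cO_M\otimes_{\cO_{M_{\bos}}}\cO_M\to\cJ_M^N\cong\Theta$ to show $\cO_M\otimes\Theta^\vee\cong\sHom_{\cO_{M_{\bos}}}(\cO_M,\cO_{M_{\bos}})$, and twists by $\omega_{M_{\bos}}$.
\item You instead use that on the supertorus all three line bundles $\Ber_M$, $\Theta$, $\omega_{M_{\bos}}$ are trivial, because the transitions have constant unipotent Jacobian. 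You then pair directly into $\cJ_M^N\cong\omega_{M_{\bos}}$ via $(f,g)\mapsto\gamma(fg)\,dz$.
\end{itemize}
The paper's route is general. Yours is self-contained but relies on triviality specific to this $M$. It has one concrete payoff: it fixes the identification so that $\tilde\gamma\circ\psi$ is literally $f\mapsto\gamma(f)\,dz_1\wedge\cdots\wedge dz_m$. That is exactly the input Lemma~\ref{trace} needs, whereas the paper has to extract it from the local formula in Noja's proof.

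Two small corrections.
\begin{itemize}
\item Your opening claim that the classes in $\cF_M$ are unchanged under transitions is false over $\C$, as you then notice. Only the statement that the change of basis is constant and unipotent is right.
\item In (3), you do not need locally free resolutions, which need not exist globally on a non-algebraic complex torus. You also do not need uniqueness of dualizing pairs. The adjunction holds for every $\cO_M$-module. Since $p$ is finite, every coherent $\cO_M$-module is a coherent $\cO_{M_{\bos}}$-module. So Serre duality on $M_{\bos}$ directly gives a perfect top-degree pairing for all coherent $\cE$, which is precisely the definition of dualizing sheaf being used.
\end{itemize}
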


\begin{subeqns}
\begin{proof}
    \noindent (1) The first isomorphism holds for any projected supermanifold by \cite[Theorem 1.1]{Noja-Thesis}.
    For the second isomorphism, we note that
    \[
        p^*(\omega_{M_{\bos}}\otimes_{\cO_{M_{\bos}}} \Theta^\vee)
        \cong
        \cO_M \otimes_{\cO_{M_{\bos}}} \omega_{M_{\bos}}
        \otimes_{\cO_{M_{\bos}}} \Theta^{\vee}.
    \]
    Since there is an isomorphism
    \[
        \sHom_{\cO_{M_{\bos}}}(\cO_M, \omega_{M_{\bos}})
        \cong
        \sHom_{\cO_{M_{\bos}}}(\cO_M, \cO_{M_{\bos}})
        \otimes_{\cO_{M_{\bos}}}\omega_{M_{\bos}},
    \]
    it suffices to show that $\cO_M \otimes_{\cO_{M_{\bos}}}\Theta^{\vee}        \cong \sHom_{\cO_{M_{\bos}}}(\cO_M, \cO_{M_{\bos}})$.

    Note that there is an isomorphism $\Theta \cong \cJ_M^N$ between $\cO_{M_{\bos}}$-modules as
    mentioned in \cite[Page 5]{Noja2026poincaredualitysupergravity}. It follows
    that the pairing
    \[
        \cO_M \otimes_{\cO_{M_{\bos}}} \cO_M \rightarrow \cO_M \rightarrow \cJ_M^N\cong \Theta
    \]
    is a perfect pairing, since it is locally given by the perfect pairing $\Lambda^i \cF_M \otimes \Lambda^{N-i}\cF_M\rightarrow \text{det}(\cF_M)$.
    
    Thus, we get the following isomorphisms
    \[
        \cO_M
        \cong
        \sHom_{\cO_{M_{\bos}}}(\cO_M, \Theta)
        \cong
        \sHom_{\cO_{M_{\bos}}}(\cO_M, \cO_{M_{\bos}})
        \otimes_{\cO_{M_{\bos}}} \Theta
    \]
    which imply our result.

    \medskip

    \noindent (2) The projection morphism $\cO_{M_{\bos}}\to \cO_M$ induces the desired    morphism $\tilde{\gamma}$.
    
    \medskip

    \noindent(3) It suffices to show that for any coherent $\cO_M$-module $\mathscr F$,
    \begin{equation}\label{pairing}
        \operatorname{Hom}_{\cO_{M}}(\mathscr F,\Ber_M)\times \rH^m(M,\mathscr F)
        \to \rH^m(M,\Ber_M) \to \mathbb C
    \end{equation}
    is a nondegenerate pairing, where the last map is $t\circ \rH^m(\tilde{\gamma})$.

    Using the isomorphisms in (1) and tensor-hom adjunction, for any coherent
    $\cO_M$-module $\sF$, we have an isomorphism of abelian groups

    \begin{displaymath}
    \begin{aligned}
        \operatorname{Hom}_{\cO_M}(\mathscr F,\Ber_M)
        &\cong \operatorname{Hom}_{\cO_M}
        (\mathscr F,\sHom_{\cO_{M_{\bos}}}(\cO_M, \omega_{M_{\bos}}))\\
        &\cong \operatorname{Hom}_{\cO_{M_{\bos}}}
        (\mathscr F\otimes_{\cO_M} \cO_M,\omega_{M_{\bos}}) \\
        &\cong \operatorname{Hom}_{\cO_{M_{\bos}}}
        (\mathscr F,\omega_{M_{\bos}})
    \end{aligned}
    \end{displaymath}
    and this isomorphism is precisely induced by the morphism $\tilde{\gamma}$.

    Thus, given any $g\in \operatorname{Hom}_{\cO_M}(\mathscr F,\Ber_M)$ and
    $r \in \rH^m(M,\mathscr F)$, the pairing above \eqref{pairing} is
    \[
        t\circ \rH^m(\tilde{\gamma})\circ \rH^m(g)(r)
        =
        t \circ \rH^m(\tilde{\gamma}\circ g)(r)
    \]
    which is precisely the pairing
    \[
        \operatorname{Hom}_{\cO_{M_{\bos}}}(\mathscr F,\omega_{M_{\bos}})
        \times \rH^m(M,\mathscr F)
        \to \rH^m(M,\mathcal \omega_{M_{\bos}}) \to \mathbb C.
    \]
    This is known to be non-degenerate given that $(\omega_{M_{\bos}},t)$ is a
    dualizing sheaf for $M_{\bos}$.
\end{proof}
\end{subeqns}

\begin{lem}\label{trace}
    There is a commutative diagram
    \[
        \begin{tikzcd}
            \rH^m(M,\cO_M)\arrow{r}{t}\arrow{d}[swap]{\rH^m(\gamma)} & \C \\
            \rH^m(M_{\bos},\cO_{M_{\bos}})\arrow{ru}[swap]{t}
        \end{tikzcd}
    \]
    where the $t$'s are the trace maps of Serre duality.
\end{lem}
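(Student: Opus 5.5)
The plan is to derive the lemma directly from Proposition~\ref{Ber-supertorus}(3), by identifying the map $\rH^m(\tilde\gamma)$ with $\rH^m(\gamma)$ once $\Ber_M$ is trivialized as in Lemma~\ref{BerTrivial}. Here the trace $t$ on $\rH^m(M,\cO_M)$ is understood as the composite $\rH^m(M,\cO_M)\xrightarrow{\rH^m(\psi)}\rH^m(M,\Ber_M)\to\C$, with $\psi\colon\cO_M\to\Ber_M$ the isomorphism of Lemma~\ref{BerTrivial}. On the bosonic side, $\omega_{M_{\bos}}\cong\cO_{M_{\bos}}$ via the global form $dz_1\wedge\cdots\wedge dz_m$, which is invariant under the translations in $G$. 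Proposition~\ref{Ber-supertorus}(3) says we may take the trace on $\rH^m(M,\Ber_M)$ to be $t_{\bos}\circ\rH^m(\tilde\gamma)$. Since dualizing traces are unique only up to a nonzero scalar (unique up to the automorphisms of the dualizing sheaf, which are global units, hence nonzero constants since $M$ is compact and $\rH^0(M,\cO_M)_0$ has only nilpotents besides constants), we fix this choice. It then suffices to show that the sheaf maps
\[
\cO_M\xrightarrow{\psi}\Ber_M\xrightarrow{\tilde\gamma}\omega_{M_{\bos}}\cong\cO_{M_{\bos}}
\qquad\text{and}\qquad \gamma\colon\cO_M\to\cO_{M_{\bos}}
\]
agree, possibly up to a sign. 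Applying $\rH^m$ to this equality of sheaf maps then gives the diagram.

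To prove the equality of sheaf maps I will work on a chart $U_\alpha$ from the atlas in the proof of Lemma~\ref{BerTrivial}, with coordinates $z|\theta_1,\dots,\theta_N$ (counting the $\alpha_{ij}$ among the odd coordinates, since $M$ is regarded over $\C$). Under $\psi$, a section $f$ maps to $\mathcal D_\alpha\cdot f$. Next I need the explicit form of the isomorphism $\Ber_M\cong\sHom_{\cO_{M_{\bos}}}(\cO_M,\omega_{M_{\bos}})$ from Proposition~\ref{Ber-supertorus}(1). Tracing through the perfect pairing $\cO_M\otimes\cO_M\to\cJ^N_M\cong\Theta$ and the adjunction of Theorem~\ref{adjunction}, the generator $\mathcal D_\alpha$ should correspond, up to a sign, to the functional $g\mapsto \phi(\mathcal D_\alpha\cdot \text{top part of } g)= g^{1,\dots,1}\,dz_1\wedge\cdots\wedge dz_m$, that is, to Berezin integration. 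Then $\mathcal D_\alpha\cdot f$ corresponds to $g\mapsto (fg)^{1,\dots,1}dz$. Evaluating at $1$, as in Proposition~\ref{Ber-supertorus}(2), gives $f^{1,\dots,1}dz$, which is $\gamma(f)$ under the trivialization of $\omega_{M_{\bos}}$.

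Finally I must check that these local identifications glue. The transition maps are pure translations, so the Jacobian and the Berezinian are both equal to $1$, while $dz$ and $\theta_1\cdots\theta_N$ are translation invariant. Lemma~\ref{lem:top_coeff} already shows that the top coefficient is well defined globally. Hence the local equalities agree on overlaps, and the two sheaf maps coincide globally.

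The main obstacle will be the middle step: making explicit the isomorphism in Proposition~\ref{Ber-supertorus}(1), which the paper cites from the literature, and confirming that it sends $\mathcal D_\alpha$ to the Berezin integral functional rather than to some other generator. If that identification differs, it can only differ by a global unit, since both sides are line bundles trivialized compatibly by translation-invariant generators. Its reduction to $M_{\bos}$ is then a nonzero constant, and I would absorb this constant into the choice of $t$, as the statement allows.
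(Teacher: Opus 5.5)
Your proposal is correct and follows the paper's proof: you reduce via Proposition~\ref{Ber-supertorus}(3) to a commutative square comparing $\gamma$ with $\tilde\gamma$ under the trivializations $\psi$ and $dz_1\wedge\cdots\wedge dz_m$. The key input is that $\mathcal D_\alpha$ corresponds to $1\otimes(\theta_1\cdots\theta_N)^\vee\otimes dz_1\wedge\cdots\wedge dz_m$ (i.e.\ Berezin integration), which the paper reads off from the proof of \cite[Theorem 1.1]{Noja-Thesis}.

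One caution about your asides. The trace is \emph{not} unique up to a nonzero scalar here, and a discrepancy by a global unit $u$ cannot be absorbed into a constant. The reason is that $\rH^0(M,\cO_M)_{\rm ev}$ contains even nilpotents $a$ (e.g.\ $\alpha_{11}\alpha_{12}$ when $n\geq 2$), so $u=1+a$ is a nonconstant unit, and $f\mapsto\gamma(uf)$ is not a multiple of $\gamma$. So your fallback would fail, and the explicit local identification must genuinely be verified, as in your main argument, rather than bypassed.
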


\begin{proof}
    By Proposition~\ref{Ber-supertorus}, we have an isomorphism of
    $\cO_M$-modules 
    \[
        \Ber_M\cong p^*(\omega_{M_{\bos}}\otimes_{\cO_{M_{\bos}}}\Theta^\vee).
    \]

    We observe that the composition
    \[
        \Ber_M
        \cong
        \cO_M \otimes \Theta^{\vee} \otimes \omega_{M_{\bos}}
        \rightarrow
        \Theta \otimes \Theta^{\vee}\otimes \omega_{M_{\bos}}
        \cong
        \omega_{M_{\bos}},
    \]
    where the morphism $\cO_M\rightarrow \Theta$ is the top coefficient map as defined in Lemma~\ref{lem:top_coeff},
    coincides with the map $\tilde{\gamma}$ in Proposition~\ref{Ber-supertorus}(2).
    
From the proof of \cite[Theorem 1.1]{Noja-Thesis}, we see that the isomorphism \[\Ber_M\cong \cO_M\otimes \Theta^{\vee}\otimes \omega_{M_{\bos}}\] in local coordinates is given by
\[
\mathcal D_{\alpha} \mapsto 1 \otimes (\theta_1\cdots \theta_N)^{\vee} \otimes dz_1\wedge \cdots \wedge dz_m
\]
From this explicit description and definition of $\gamma \colon \cO_M\rightarrow \cO_{M_{\bos}}$ after Lemma ~\ref{lem:top_coeff}, we then obtain a commutative diagram of $\cO_{M_{\bos}}$-modules 
    \[
        \begin{tikzcd}
            \cO_M\arrow{r}{\gamma}\arrow{d} & \cO_{M_{\bos}}\arrow{d} \\
            \Ber_M\arrow{r}{\tilde{\gamma}} & \omega_{M_{\bos}}
        \end{tikzcd}
    \]
    whose vertical maps are isomorphisms. By Proposition~\ref{Ber-supertorus}(3), we are done.
\end{proof}

\begin{proof}[Proof of Theorem~\ref{thm:duality-compat}]
Since the isomorphism $\rH^i(M,\cO_M)\cong \rH^i(\cT, E)$
is given by a sequence of isomorphisms as described in
Section~\ref{ss:group-coh}, and the Poincar\'e duality pairing is given by
cup products followed by the ``top coefficient'' map, we break the triangle
into the following diagram, which we then proceed to show is commutative.
\[
\begin{tikzcd}[
column sep=1.5em,
row sep=2.2em,
font=\small
]
	{\rH^i(M,\mathcal{O}_M) \times \rH^{m-i}(M,\mathcal{O}_M)} & {\rH^m(M,\mathcal{O}_M)} & {\rH^{m}(M_{\bos},\mathcal{O}_{M_{\bos}})} & {\C} \\
	\\
	{\rH^i(G,N) \times \rH^{m-i}(G,N)} & {\rH^m(G,N)} & {\rH^m(G,\mathcal{O}(\C^m))} & {\C} \\
	\\
	{\rH^i(\mathcal{T},\rH^0(\mathcal{S},N)) \times \rH^{m-i}(\mathcal{T},\rH^0(\mathcal{S},N))} & {\rH^m(\mathcal{T},\rH^0(\mathcal{S},N)) } & {\rH^m(\mathcal{T},\rH^0(\mathcal{S}, \mathcal{O}(\C^m)))} & {\C} \\
	\\
	{\rH^i(\mathcal{T},E) \times \rH^{m-i}(\mathcal{T},E)} & {\rH^m(\mathcal{T},E)} & {\rH^m(\mathcal{T},\C)} & {\C}
	\arrow["\cup", from=1-1, to=1-2]
	\arrow["\cong"', from=1-1, to=3-1]
	\arrow["", from=1-2, to=1-3]
	\arrow["\cong"', from=1-2, to=3-2]
	\arrow["{t\; \cong}", from=1-3, to=1-4]
	\arrow["\cong"', from=1-3, to=3-3]
	\arrow["\text{id}"', from=1-4, to=3-4]
	\arrow["\cup", from=3-1, to=3-2]
	\arrow["\cong"', from=3-1, to=5-1]
	\arrow["", from=3-2, to=3-3]
	\arrow["\cong"', from=3-2, to=5-2]
	\arrow["\cong", from=3-3, to=3-4]
	\arrow["\cong"', from=3-3, to=5-3]
	\arrow["\text{id}"', from=3-4, to=5-4]
	\arrow["\cup", from=5-1, to=5-2]
	\arrow["\cong"', from=5-1, to=7-1]
	\arrow["", from=5-2, to=5-3]
	\arrow["\cong"', from=5-2, to=7-2]
	\arrow["\cong", from=5-3, to=5-4]
	\arrow["\cong"', from=5-3, to=7-3]
	\arrow["\text{id}"', from=5-4, to=7-4]
	\arrow["\cup", from=7-1, to=7-2]
	\arrow["", from=7-2, to=7-3]
	\arrow["\cong", from=7-3, to=7-4]
\end{tikzcd}
\]
The maps in the above diagram between the second and third columns are induced
from $\gamma$ and are defined precisely in the proof below.

\begin{itemize}
    \item[I.] For the rectangle made up by the first two rows, we can show
    commutativity (up to a non-zero scalar) in the following steps:
    \begin{itemize}
        \item From Lemma~\ref{trace}, the map
        \[
            \rH^m(M,\mathcal{O}_M)\xrightarrow[]{\rH^m(\gamma)}
            \rH^m(M,\mathcal{O}_{M_{\bos}})            \xrightarrow{t}            \C
        \]
        is the trace map on $\rH^m(M,\mathcal{O}_M)$.

        \item Since Mumford's isomorphism is compatible with cup products
        \cite[Appendix to \S 2, Property (b)]{Mumfordabelianvarieties1970}, we get that the
        leftmost square commutes (cup product is induced by multiplication on $\mathcal{O}_M$ and $N$).

        \item Define the map
        \[
            \partial_{\theta}\colon E \rightarrow \C
        \]
        which extracts the coefficient of the maximal product
        $\prod_{i,j}{\alpha_{ij}}\prod_{i}\theta_i$ (product taken in
        lexicographical order). It is easy to see that the map
        \[
            \gamma(M)\colon
            N=\mathcal{O}(\C^m)\otimes \Lambda[\theta_1,\ldots,\theta_n]
            \rightarrow
            \mathcal{O}(\C^m)
        \]
        induced by the sheaf map
        $\gamma\colon \mathcal{O}_M\rightarrow \mathcal{O}_{M_{\bos}}$
        is $G$-equivariant and is the same as
        $\text{id}\otimes \partial_{\theta}$. Thus, from the naturality of
        Mumford's isomorphism \cite[Appendix to \S2]{Mumfordabelianvarieties1970},
        the middle square commutes.

        \item Since all the morphisms in the rightmost square are
        isomorphisms, we see that it commutes up to a non-zero scalar.
    \end{itemize}

    \item[II.] For the commutativity of the rectangle made up by the second and
    third rows, note that the isomorphisms
    \[
        \rH^i(\mathcal{T}, \rH^0(\mathcal{S},N))\cong \rH^i(G,N),
        \quad
        \rH^m(\mathcal{T}, \rH^0(\mathcal{S}, \mathcal{O}(\C^m)))
        \cong
        \rH^m(G, \mathcal{O}(\C^m))
    \]
    coming from the degeneration of the Hochschild--Serre spectral sequence
    are precisely the inflation map in group cohomology
    \cite[Chapter XI Proposition 10.2]{MacLaneHomology}, which is functorial and
    commutes with cup products
    \cite[Proposition I.5.2 and I.5.3]{CohofNumberFields}.

    \item[III.] For the commutativity of the rectangle made up of the third and
    fourth rows, note that by Lemma~\ref{indepofz}, the isomorphisms
    \[
        \rH^i(\mathcal T, E)\cong \rH^i(\mathcal T,\rH^0(\mathcal S, N)),
        \quad
        \rH^i(\mathcal T, \C)\cong
        \rH^i(\mathcal T,\rH^0(\mathcal S, \mathcal{O}(\C^m)))
    \]
    are induced by the inclusion of algebras
    \[
        E\hookrightarrow \rH^0(\mathcal{S}, \mathcal{O}(\C^m))\otimes E
        = \rH^0(\mathcal{S},N),
        \quad
        \C\hookrightarrow \rH^0(\mathcal{S}, \mathcal{O}(\C^m))
    \]
    and thus the required commutativity follows from functoriality of
    cup product in group cohomology, along with the obvious fact that the
    inclusion is compatible with
    \[
        E\xrightarrow[]{\partial_{\theta}}\C,
        \quad
        \rH^0(\mathcal{S},N)=
        \rH^0(\mathcal{S}, \mathcal{O}(\C^m))\otimes E
        \xrightarrow[]{\text{id}\otimes \partial_{\theta}}
        \rH^0(\mathcal{S}, \mathcal{O}(\C^m)). \qedhere
    \]
\end{itemize}
\end{proof}

\section{The ring structure of $\rH^0(M,\mathcal O_M)$}\label{section: gen_rel}

In this section, we will focus on the $\C$-algebra structure of  $\rH^0(M,\mathcal O_M)$ and give a presentation by generators and relations.
We continue to use the notation from Section~\ref{ss:lie-coh}. So we have
\[
    E = \bigwedge^\bullet ((U\oplus U')\otimes V),
\]
where $U,U',V$ are vector spaces with bases $\{u_1,\dots,u_m\}$, $\{u_{m+1}\}$, and $\{v_1,\dots,v_n\}$, respectively. Note that $E$ has a multi-grading where 
\[
E_{i_1,i_2,\ldots, i_{m+1}}:= \bigwedge^{i_1} (\C u_1 \otimes V) \otimes  \bigwedge^{i_2}(\C u_2\otimes V) \otimes \cdots \otimes \bigwedge^{i_{m+1}}(\C u_{m+1}\otimes V),
\] 
where $i_1, i_2, \ldots, i_{m+1} \in \mathbb{Z}_{\geq 0}^{m+1}$. By Corollary \ref{Algebraic-Description}, $\rH^0(M,\mathcal O_M)$ has the following algebraic characterization:
\[
    \rH^0(M,\cO_M) = \rH^0(\mathfrak{n},E)
\]
as $\C$-subspaces sitting inside $E$, where $\fn\subseteq \fgl(U\oplus U')$ is the subalgebra of all strictly block upper-triangular matrices.

\begin{remark}
    For classes in $\rH^{>0}(M,\cO_M)$, we do not know if the cup product induced from Lie algebra cohomology agrees with the cup product on group cohomology since the isomorphism between Koszul complexes that we gave in Proposition~\ref{prop:koszul-isom} is not multiplicative.
\end{remark}

As in Section~\ref{ss:lie-coh}, we denote $\alpha_{ij} = u_i\otimes v_j$ for $i = 1,\dots,m$ and $\theta_j = \alpha_{m+1,j} = u_{m+1}\otimes v_j$. 
For any $(m+1)$-multiset $N = \{1^{\nu_1},\dots,n^{\nu_n}\}$ of $\{1,\dots,n\}$, we define
\begin{align*}
    \tilde q_N &= \nu_1!\dots\nu_n! \sum_{\{i_1,\dots,i_{m},j\} = N} \alpha_{1i_1}\dots\alpha_{mi_m}\theta_{j}\in \rH^0(M,\mathcal O_M)_{1,1,\dots,1}\\
 q_N  &= \nu_1!\dots\nu_n!\cdot \tilde q_N.
\end{align*}
where the sum runs over all permutations of the multiset $N$ and \[\rH^0(M,\cO_M)_{1,1,\ldots,1}= \rH^0(M,\cO_M)\cap E_{1,1,\ldots,1}\] when realized as a subspace of $E$.

We recall the main theorem from the introduction that we have to prove in this section.

\begin{theorem}\label{thm: ring-generatorandrelations}

Let $M$ be a supertorus of dimension $m|n$. Then

\begin{itemize}
    \item[(i)] $\rH^0(M,\mathcal O_M)$ is generated as a $\C$-algebra by $\alpha_{ij}$ and $q_{N}$ where $N$ is an $(m+1)$-multiset of $\{1,\dots,n\}$.
    \item[(ii)] We have the following complete list of relations among the generators of $\rH^0(M,\mathcal{O}_M)$:
    \begin{itemize}
\item For a fixed $1\leq i\leq m$ and a $(m+2)$-multiset $N$ of $\{1, \dots,n\}$ the `linear' relations 
\[
\sum_{j \in N}\alpha_{ij}q_{N \setminus \{j\}}=0.
\]

\item For fixed multisets $N_1,N_2$ of size $m$ and $m+2$ respectively, the `quadratic' relations 
$$\sum_{j\in N_2}q_{N_1\cup \{j\}}q_{N_2\backslash \{j\}}=0.$$
\end{itemize}
\end{itemize}
\end{theorem}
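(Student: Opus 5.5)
We will work throughout with the algebraic description $\rH^0(M,\cO_M)=\rH^0(\fn,E)=E^{\fn}$ from Corollary~\ref{Algebraic-Description}. This is the subalgebra of $E$ killed by the commuting derivations $X_{i,m+1}=\sum_k \alpha_{ik}\partial/\partial\theta_k$. It is $\fgl(U)\times\fgl(V)$-stable and multigraded. Theorem~\ref{Irreducible-Decomposition} with $p=0$ gives its decomposition:
\[
\rH^0=\bigoplus_{\lambda,\ \lambda_1\le n}\bS_{(\lambda_1,\dots,\lambda_m)}(U)\otimes\bS_{\lambda_{m+1}}(U')\otimes\bS_{\lambda^T}(V).
\]
We will compare this with the algebra presented by the stated generators and relations.

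\textbf{Generation.} First we check directly that $\alpha_{ij}$ and $q_N$ are $\fn$-invariant. The $\alpha_{ij}$ are clearly invariant. For $q_N$, applying $X_{i,m+1}$ replaces $\theta_j$ by $\alpha_{ij}$, which produces two equal factors $\alpha_{i\cdot}$. The symmetrization over all orderings of $N$ makes these terms cancel in pairs.

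Next, define $R\subseteq E^{\fn}$ to be the subalgebra generated by these elements. Since $R$ is $\fgl(U)\times\fgl(V)$-stable, it suffices to show that every highest weight vector of $E^{\fn}$ lies in $R$. The summand indexed by $\lambda$ has $\theta$-degree $\lambda_{m+1}$ and highest weight $(\lambda_1,\dots,\lambda_m)$ for $U$ and $\lambda^T$ for $V$. We construct an explicit highest weight vector as a product
\[
\prod_{c}\Delta_c\cdot\prod q\text{'s},
\]
where the $\Delta_c$ are column determinants in the $\alpha$'s (for columns of height $\le m$), and the columns of height $m+1$ are replaced by suitable $q_N$ with $N=\{1,\dots,m+1\}$. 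Concretely, $q_{\{1,\dots,m+1\}}$ equals the $(m+1)\times(m+1)$ determinant of the matrix $(\alpha_{ij})$ with rows $1,\dots,m+1$, up to scalar. Hence each column of height $m+1$ contributes exactly one $\theta$. This product is nonzero and has the correct weight, so it spans the highest weight space in $E$ for that weight; since the multiplicity in $E^{\fn}$ is one, the product lies in $R$. This proves (i).

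\textbf{Relations.} The linear relations are verified by expanding: the coefficient of each monomial cancels because it arises from two positions of the index in $N$. The quadratic relations are verified similarly, as an odd analogue of the Plücker identity.

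For completeness, let $B$ be the abstract algebra presented by these generators and relations. We then have a surjection $B\to E^{\fn}$, and we need it to be injective. For this we follow the strategy announced in the introduction. We fix $m$, let $n$ vary, and regard everything as polynomial functors in $V$. We then apply the transpose symmetry $\Omega$, which exchanges exterior and symmetric powers and converts $E^{\fn}$ into a commutative algebra
\[
A=\Omega E^{\fn}.
\]
The span of the $q_N$ becomes $\bigwedge^{m+1}$ of a space, interpreted as maximal minors, and $\Spec A$ becomes a variety of matrices together with maximal minors.

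We will realize $\Spec A$ as a Kempf collapsing of a vector bundle over a Grassmannian, with $A$ equal to the global sections of $\mathrm{Sym}$ of the bundle. We prove normality and rational singularities via Kempf's criterion, that is, vanishing of higher cohomology of $\mathrm{Sym}$ of the bundle, using Bott's theorem. Then Weyman's geometric technique expresses $\Tor_1(A,\C)$ as the sheaf cohomology
\[
\bigoplus_i \rH^{i}\Big(\bigwedge^{i+1}\xi\Big),
\]
where $\xi$ is the dual of the kernel bundle. We compute these groups with Bott's theorem and identify them as $\fgl$-representations. Finally, we check that they match the $\Omega$-images of the spans of the linear and quadratic relations. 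Since $\Omega$ is a tensor equivalence, this shows that the stated relations generate the whole ideal.

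\textbf{Main obstacle.} The main difficulty is the $\Tor_1$ computation. Handling the Bott algorithm for $\bigwedge^{i+1}\xi$ on the Grassmannian in general, and showing that no extra minimal relations appear in degree $\ge3$, is where most of the work will lie. We will first compute the graded degrees in which $\Tor_1$ can be nonzero, and then compare the representation types with those of the relations.
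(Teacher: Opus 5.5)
Your strategy for both parts is the paper's: reduce (i) to highest weight vectors, and for (ii) apply $\Omega$, realize $\Omega\rH^0(\fn,E)$ as $\rH^0(X,S^\bullet\eta)$ on the Grassmannian $X$ of rank $m+1$ quotients of $V$, and read off $\Tor_1$ by Weyman's technique. However, the explicit construction in (i) is wrong, and the substantive part of (ii) is missing.

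\textbf{Generation.} Your vectors are taken from the symmetric algebra $S^\bullet(U\otimes V)$, where the highest weight vector of $\bS_\lambda U\otimes \bS_\lambda V$ is a product of leading minors. In the exterior algebra $E$ the Cauchy summand is $\bS_\lambda(U\oplus U')\otimes\bS_{\lambda^T}(V)$. Its highest weight vector is the monomial $a_\lambda=\prod_{i=1}^{m+1}\prod_{j=1}^{\lambda_i}\alpha_{ij}$.

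Your product of column minors has $\fgl(V)$-weight $\lambda$ rather than $\lambda^T$, and it can vanish. For $\lambda=(2)$ the two height-one columns give $\alpha_{11}\cdot\alpha_{11}=0$, whereas the correct vector is $\alpha_{11}\alpha_{12}$. Likewise $q_{\{1,\dots,m+1\}}$ has $V$-weight $(1,\dots,1)$, so it is not a highest weight vector of $\bigwedge^m U\otimes U'\otimes S^{m+1}V$. The raising operator $\sum_k\alpha_{k1}\,\partial/\partial\alpha_{k2}$ sends it to a nonzero multiple of $q_{\{1,1,3,\dots,m+1\}}$. Note also that having the correct weight does not make a vector highest; you must check it is killed by the raising operators.

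The repair is to use the diagonal generators $q_{\{i,\dots,i\}}\propto\alpha_{1i}\cdots\alpha_{mi}\theta_i$:
\[
a_\lambda=\pm\prod_{i=1}^{\lambda_{m+1}}q_{\{i,\dots,i\}}\cdot\prod_{i=1}^{m}\prod_{j=\lambda_{m+1}+1}^{\lambda_i}\alpha_{ij}.
\]
This is a genuine nonzero highest weight vector, so your multiplicity-one argument then goes through. Alternatively, as in the paper, a $\fgl(U)\times\fgl(V)$-highest weight vector of $E^{\fn}$ is killed by all of $\fu$, since $\fn$ and the strictly upper-triangular part of $\fgl(U)$ span that of $\fgl(U\oplus U')$. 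Hence it is a multiple of some $a_\lambda$.

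\textbf{Completeness.} Here you restate the plan and defer the computation that constitutes the proof. Moreover, ``compute $\bigoplus_i\rH^i(\bigwedge^{i+1}\xi)$ with Bott's theorem'' does not work as stated. The summand $\xi_1=U\otimes\cR$ is fine. But $\xi_2=\ker\bigl(\bigwedge^mU\otimes\bigwedge^{m+1}V\otimes\cO_X\to\bigwedge^mU\otimes\bigwedge^{m+1}\cQ\bigr)$ is in general not a direct sum of bundles $\bS_\alpha\cQ\otimes\bS_\beta\cR$, so Bott does not directly compute the cohomology of $\bigwedge^e\xi_2$. (Also, $\xi$ is this kernel itself, not a dual.)

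The paper proceeds as follows.
\begin{itemize}
\item It expands $\bigwedge^d\xi_1=\bigoplus_\mu\bS_{\mu^T}U\otimes\bS_\mu\cR$ with $\mu_1\le m$.
\item It resolves $\bigwedge^e\xi_2$ by the exact complex with terms $\cF(e)_j=\bigwedge^{e-j}(\bigwedge^{m+1}V)\otimes S^j(\bigwedge^{m+1}\cQ)$.
\item Bott is used only to show that $\bS_\mu\cR\otimes S^j(\bigwedge^{m+1}\cQ)$ has no higher cohomology when $\mu_1\le m$. By dimension shifting, this kills all contributions with $d\ge2$.
\item The case $d=1$, $e\ge2$ additionally needs surjectivity of $\rH^0(\cR\otimes\cF(e)_{e-1})\to\rH^0(\cR\otimes\cF(e)_e)$.
\item The case $d=0$ needs the classical theorem that the Pl\"ucker ideal is generated by quadrics, which no Bott computation provides.
\end{itemize}
Only bidegrees $(1,1)$ and $(0,2)$ survive.

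The matching then needs two arguments you have not supplied.
\begin{itemize}
\item The $(1,1)$ part is $\bigwedge^mU\otimes U\otimes\bigwedge^{m+2}V$, by Pieri plus surjectivity of multiplication. Its $\Omega$-image is spanned by the linear relations.
\item The odd Pl\"ucker relations span the whole kernel of $S^2(S^{m+1}V)\to\bS_{(m+1,m+1)}V$ or $\bigwedge^2(S^{m+1}V)\to\bS_{(m+1,m+1)}V$, according to the parity of $m$. The paper proves this by showing that comultiply-then-multiply $S^{m+2}V\otimes S^mV\to S^{m+1}V\otimes S^{m+1}V$ is injective. By Pieri, its cokernel is then exactly $\bS_{(m+1,m+1)}V$.
\end{itemize}
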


\begin{example}
    Let $M$ be the supertorus of dimension $1|n$. Write $\alpha_i:=\alpha_{1i}$ for $1\le i\le n$. By Theorem~\ref{thm: ring-generatorandrelations}(i), the $\C$-algebra $\rH^0(M,\mathcal O_M)$ is generated by $\alpha_{i}$ and $q_{\{i,j\}} = \alpha_i\theta_j + \alpha_j\theta_i$. If $i=j$, we have
    \[
        q_{\{i^2\}} = 2\tilde q_{\{i^2\}} = 2\alpha_i\theta_i.
    \]    
    By Theorem~\ref{thm: ring-generatorandrelations}(ii), the generators $\alpha_i,q_N$ satisfy the following two types of relations:
    \begin{enumerate}
        \item Given $1 \le i \le n$ and a $3$-multiset $\{j,k, \ell\}$ of $\{1,\dots,n\}$ the linear relation is
        \[
        \alpha_{j}q_{\{k,\ell\}} + \alpha_{k} q_{\{i,\ell\}} + \alpha_{\ell} q_{\{j,k\}} = 0.
        \]  
        \item For $N_1 = \{l\}, N_2= \{i,j,k\}$, the quadratic relation is
            \[
              q_{il}q_{jk}+q_{jl}q_{ik}+q_{kl}q_{ij} = 0.
            \]
    \end{enumerate}

    This recovers \cite[Theorem 4.9]{RabinRhoadesKim2023}. Note that the generators in \cite{RabinRhoadesKim2023} are the reduced generators $\tilde q_N$ in our notation. 
\end{example}

The proof of Theorem~\ref{thm: ring-generatorandrelations} is now carried out in two parts in the following subsections.

\subsection{Generators of $\rH^0(M,\mathcal O_M)$}

We consider the full action of $\fgl(U \oplus U') \times \fgl(V)$ on $E$. We represent elements of this Lie algebra as pairs of matrices with respect to the chosen bases. As our Cartan subalgebra, we choose the subalgebra of pairs of diagonal matrices, and our Borel subalgebra is the subalgebra of pairs of upper-triangular matrices. We let $\fu$ denote the nilpotent subalgebra consisting of pairs of strictly upper-triangular matrices in these two factors. We will denote weights as sequences $(a;b)$ where $a \in \C^{m+1}$ and $b \in \C^n$.

As discussed earlier in the proof of Theorem~\ref{Irreducible-Decomposition}, the Cauchy identity gives a $\fgl(U\oplus U')\times \fgl(V)$-equivariant decomposition of $E$ into irreducible representations:
\[
  E \cong \bigoplus_\lambda \bS_\lambda(U \oplus U') \otimes \bS_{\lambda^T}(V).
\]

Since $\bS_\lambda(U \oplus U') \otimes \bS_{\lambda^T}(V)$ is irreducible, there is a 1-dimensional subspace of highest weight vectors of weight $(\lambda;\lambda^T)=(\lambda_1,\dots,\lambda_{m+1}; \lambda^T_1,\dots,\lambda^T_m)$ and it generates $\bS_\lambda(U \oplus U') \otimes \bS_{\lambda^T}(V)$.

Concretely, this means that if $v$ is a highest weight vector and $x \in \fgl(U \oplus U') \times \fgl(V)$ is the pair of diagonal matrices with diagonal entries $x_1,\dots,x_{m+1}$ in the first matrix and $y_1,\dots,y_n$ in the second matrix, then
\[
  x \cdot v = \left(\sum_{i=1}^{m+1} x_i \lambda_i + \sum_{j=1}^n y_j \lambda^T_j\right)v.
\]

We will denote $x_{ij}$ (resp. $y_{ij}$) as the matrix in $\fgl(U\oplus U')$ (resp. $\fgl(V)$) whose $(i,j)$-entry is $1$ and other entries are all $0$.

\begin{proof}[Proof of Theorem~\ref{thm: ring-generatorandrelations}(i):]
  For any $\lambda$, we define $a_\lambda\in E$ by
\[
  a_\lambda := \prod_{i=1}^{m+1} \prod_{j=1}^{\lambda_i} \alpha_{ij}
\]
  where the product is taken subject to lexicographic order (choosing a different order only introduces a sign and will not materially affect our discussion). 
  
  We claim that $a_\lambda$ is a highest weight vector of $E$ of weight $(\lambda;\lambda^T)$. In fact, if $\alpha_{jk}$ is a factor of $a_\lambda$, the action of $\alpha_{ik}\frac{\partial}{\partial \alpha_{jk}}$ on $a_\lambda$ replaces $\alpha_{jk}$ by $\alpha_{ik}$. Thus, if $i<j$, since $\lambda_1\ge \lambda_2 \ge \cdots$ is decreasing, that action will either be zero if $\alpha_{jk}$ is not in $a_\lambda$, or produce a repeated $\alpha_{ik}$ which kills $a_\lambda$ in any case. As a result, when $i<j$, we have that $X_{i,j} = \sum_k \alpha_{ik}\frac{\partial}{\partial \alpha_{jk}}$ annihilates $a_\lambda$. Similarly, when $i<j$, the action of $y_{i,j}$ on $a_{\lambda}$ (which acts  via the operator $Y_{i,j}=\sum_k \alpha_{ki}\frac{\partial}{\partial \alpha_{kj}}$) is just $0$, i.e., we have $y_{ij}\cdot a_\lambda = 0$. It follows that $a_\lambda$ is killed by $\fu$, the nilpotent subalgebra consisting of pairs of strictly upper-triangular matrices.  
  
  Next,  we have $x_{ii}\cdot a_\lambda = \lambda_ia_\lambda$ and $y_{jj}\cdot a_\lambda = \lambda^T_ja_\lambda$.
  It follows that if $(x,y)$ is the pair of diagonal matrices with diagonal entries $x_1,\dots,x_{m+1}$ and $y_1,\dots,y_n$, respectively, then
  \[
        x\cdot a_\lambda  = \left(\sum_{i=1}^{m+1} x_i \lambda_i + \sum_{j=1}^n y_j \lambda^T_j\right)a_\lambda.
  \]
  These calculations prove the claim.
  
  Next, from the definition, 
  \[
  \rH^0(\fn,E) = \{e\in E\ |\ xe=0 \text{ for all } x\in\fn\}
  \]
  is a subspace of $E$, which is a representation of the Lie subalgebra $\fgl(U)\times \fgl(V)$. Note that $\fn \subseteq \fu$, and $\fu \cap (\fgl(U) \times \fgl(V))$ is the subspace of pairs of upper-triangular matrices in this smaller subalgebra. In particular, if a representation appears in $\rH^0(\fn,E)$, any of its highest weight vectors is a scalar multiple of one of the $a_\lambda$.
  
  The span of $\alpha_{ij}$ and $q_N$ is a $\fgl(U) \times \fgl(V)$-subrepresentation of $\rH^0(\fn,E)$, and hence the subalgebra they generate is also a $\fgl(U) \times \fgl(V)$-subrepresentation. So it will suffice to show that each of the $a_\lambda$ can be generated by $\alpha_{ij}$ and $q_N$. We can do this explicitly:
  \[
    a_\lambda = \pm \left(\prod_{i=1}^{\lambda_{m+1}} q_{\{i,\dots,i\}}\right) \cdot \left(\prod_{i=1}^m \prod_{j=\lambda_{m+1}+1}^{\lambda_i} \alpha_{ij}\right),
  \]
  with the understanding that empty products give 1. This completes the proof.
\end{proof}

\subsection{Relations among the generators}\label{ss:rel_gen}

We now want to understand the relations between these generators.  In this section, we will show that the relations in Theorem~\ref{thm: ring-generatorandrelations}(ii) are in fact valid. The proof that they are a complete set of relations can be found in subsection~\ref{appendixA}.

In this section, given a multiset $N$, we use $\sum_{j \in N}$ to denote summation with multiplicities,  i.e., $j$ will be taken $\nu_j$ times in the sum where $\nu_j$ is its multiplicity in $N$, and $\sum^{\sim}$ to denote summation without multiplicities. 

\begin{example}
Let $N = \{1^2,2^3,3\} = \{1,1,2,2,2,3\}$. Then we have
\[
    \sum_{j\in N}q_{N \setminus \{j\}} = 2q_{12^33} + 3q_{1^22^23}+q_{1^22^3}, \quad \sum_{j\in N}^\sim q_{N \setminus \{j\}} = q_{12^33} + q_{1^22^23}+q_{1^22^3}. \qedhere
\]
\end{example}

\begin{lem}
    For any $(m+2)$-multiset $N$ of $\{1,\dots,n\}$ and $i = 1,\dots,m$, we have
    \[
        \sum_{j \in N} \alpha_{ij}q_{N \setminus \{j\}} = 0.
    \]
\end{lem}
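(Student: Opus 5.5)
The plan is to rewrite both the generators $q_N$ and the left-hand side as sums over \emph{ordered} arrangements, i.e. over a full symmetric group acting on a sequence, so that the multiplicity bookkeeping disappears. Then the vanishing should come from a sign-reversing involution built from the anticommutativity of the odd generators $\alpha_{ij}$.

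\textbf{Step 1: symmetrize the generators.} Fix an $(m+1)$-multiset $N=\{1^{\nu_1},\dots,n^{\nu_n}\}$ and list its elements as a sequence $(n_1,\dots,n_{m+1})$. I will check that
\[
\nu_1!\cdots\nu_n!\sum_{\{i_1,\dots,i_m,j\}=N}\alpha_{1i_1}\cdots\alpha_{mi_m}\theta_j=\sum_{\sigma\in S_{m+1}}\alpha_{1n_{\sigma(1)}}\cdots\alpha_{mn_{\sigma(m)}}\theta_{n_{\sigma(m+1)}} .
\]
This holds because each distinct arrangement of the multiset is hit by exactly $\nu_1!\cdots\nu_n!$ permutations $\sigma$. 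This is the normalization of $q_N$ used in Theorem~\ref{thm: ringgeneratorandrelations}.

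The definition of $\tilde q_N, q_N$ in Section~\ref{section: gen_rel} appears to carry an extra factor $\nu_1!\cdots\nu_n!$. I will work with the introduction's normalization and note this explicitly. The reason it matters is that the scalars attached to $q_{N\setminus\{j\}}$ differ for different $j$, so the identity genuinely depends on the normalization chosen.

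\textbf{Step 2: absorb the multiplicities in the sum.} Now let $N$ be an $(m+2)$-multiset, listed as a sequence $(n_0,n_1,\dots,n_{m+1})$. The sum $\sum_{j\in N}$ with multiplicities is exactly $\sum_{k=0}^{m+1}$ with $j=n_k$. For each $k$, the multiset $N\setminus\{n_k\}$ is listed by the remaining positions. Applying Step 1 to each term and combining, I expect
\[
\sum_{j\in N}\alpha_{ij}\,q_{N\setminus\{j\}}=\sum_{\tau\in S_{m+2}}\alpha_{i\,n_{\tau(0)}}\,\alpha_{1n_{\tau(1)}}\cdots\alpha_{mn_{\tau(m)}}\,\theta_{n_{\tau(m+1)}} .
\]
Here $\tau(0)=k$ records which position was removed, and the restriction of $\tau$ to $\{1,\dots,m+1\}$ is the bijection onto the remaining positions.

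\textbf{Step 3: cancel in pairs.} Since $1\le i\le m$, the row index $i$ occurs twice in each monomial above: once in the factor $\alpha_{i\,n_{\tau(0)}}$ and once in the factor $\alpha_{i\,n_{\tau(i)}}$. Consider the involution $\tau\mapsto\tau\circ(0\ i)$ on $S_{m+2}$, which is fixed-point free. It swaps the column indices of these two factors and leaves every other factor unchanged. The resulting monomial is the original one with two odd factors interchanged, so it equals the negative of the original (a repeated index just gives $0=-0$). Hence the terms cancel in pairs and the sum vanishes.

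The only real obstacle is the bookkeeping in Step 2. There I must verify carefully that summing with multiplicities, together with the factorial normalizations, produces exactly one copy of each $\tau\in S_{m+2}$ with no leftover scalars.
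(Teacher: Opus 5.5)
Your proof is correct and is essentially the paper's argument. The paper factors out $C=\nu_1!\cdots\nu_n!$ to rewrite the sum as $C\sum^{\sim}_{j\in N}\alpha_{ij}\tilde q_{N\setminus\{j\}}$, then notes that each basis monomial with two row-$i$ factors $\alpha_{ik},\alpha_{il}$ arises exactly twice (from $j=k$ and $j=l$) with opposite signs; your involution $\tau\mapsto\tau\circ(0\ i)$ on $S_{m+2}$ performs exactly this cancellation. Your normalization remark is also right: both that reduction and the identity itself need the introduction's normalization of $q_N$, which matches the $m=1$ example $q_{\{i^2\}}=2\alpha_i\theta_i$, so the extra factorial in the later definition of $\tilde q_N$ should be read as a typo.
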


\begin{proof}
    Without loss of generality, we can assume $i = 1$. Rewrite the sum as a linear combination of basis vectors:
    \[
        \sum_{j \in N} \alpha_{ij} q_{N \setminus \{j\}} = C\sum_{j \in  N}^\sim \alpha_{ij}\tilde q_{N \setminus \{j\}} = \sum_{1\leq k,l,j_2,\dots,j_{m+1}\leq n,\ k<l}^\sim c_{kl}^{j_2,\dots,j_{m+1}}\cdot \alpha_{1k}\alpha_{1l}\alpha_{2j_2}\dots \alpha_{mj_{m}}\theta_{j_{m+1}}.
    \]
    where  $C =  \nu_1!\dots\nu_n!$ if we write $N = \{1^{\nu_1},\dots,n^{\nu_n}\}$.
    
    For any basis vector $\alpha_{ik}\alpha_{il}\alpha_{ij_2}\dots \alpha_{ij_{m+1}}$ on the right, there are only two terms on the left containing it, namely $\alpha_{ik}q_{\{l,j_2,\dots,j_{m+1}\}}$ and $\alpha_{il}q_{\{k,j_2,\dots,j_{m+1}\}}$. Since they have opposite signs, we conclude that $c_{kl}^{j_2,\dots,j_{m+1}} = 0$.
\end{proof}

\begin{lem}\label{pluckerrel}
     Fix $2$ multisets $N_1, N_2$ of $\{1,\dots,n\}$ of sizes $m$ and $m+2$, respectively. Write $N_2 = \{1^{\nu_1},\dots,n^{\nu_n}\}$. Then
    \[
        \sum_{j \in N_2} q_{N_1 \cup \{j\}} q_{N_2 \setminus \{j\}} = 0.
    \]
\end{lem}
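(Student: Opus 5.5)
The plan is a generating-function argument. We encode all the $q_N$ as derivatives of one polynomial whose coefficients lie in $E$. The Plücker-type sum then becomes a coefficient of a product that vanishes because an odd element squares to zero.

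\emph{Setting up the generating function.} Introduce commuting indeterminates $x=(x_1,\dots,x_n)$ and $y=(y_1,\dots,y_n)$, and work in $E[x,y]$. Define the odd linear forms
\[
a_i(x)=\sum_{j=1}^n x_j\alpha_{ij}\quad (1\le i\le m),\qquad \theta(x)=\sum_{j=1}^n x_j\theta_j ,
\]
and set $P(x)=a_1(x)\cdots a_m(x)\theta(x)$. Expanding, the coefficient of $x^N=\prod_j x_j^{\nu_j}$ in $P(x)$ is the sum of $\alpha_{1i_1}\cdots\alpha_{mi_m}\theta_j$ over all ordered tuples $(i_1,\dots,i_m,j)$ with underlying multiset $N$. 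Hence $\partial_x^N P:=\prod_j\partial_{x_j}^{\nu_j}P$ equals $\nu_1!\cdots\nu_n!$ times this sum.

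The first step is to check that this agrees with the definition of $q_N$, up to a constant $c$ depending only on $m$, not on $N$. This requires reading the paper's convention for ``sum over all permutations of the multiset'' carefully, and I expect this bookkeeping to be the only real obstacle. If $q_N$ carries an extra factor that depends on $N$, I would adjust the generating function, for instance by using divided powers, so that $q_N=c\,\partial_x^N P$ holds exactly. Since the relation is homogeneous quadratic in the $q$'s, a uniform constant $c$ is harmless.

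\emph{Rewriting the relation as a coefficient.} For each $j$ we have $\partial_y^{N_2}\bigl(y_jP(y)\bigr)=\nu_j\,\partial_y^{N_2-e_j}P(y)$, where $\nu_j$ is the multiplicity of $j$ in $N_2$. This is exactly the multiplicity weighting in $\sum_{j\in N_2}$. Therefore
\[
\sum_{j\in N_2} q_{N_1\cup\{j\}}\,q_{N_2\setminus\{j\}} \;=\; c^2\,\partial_x^{N_1}\partial_y^{N_2}\Bigl[\Bigl(\sum_{j} y_j\,\partial_{x_j}P(x)\Bigr)P(y)\Bigr].
\]
Note that $P(x)$ has degree $m+1$ in $x$, so the $N_1$-derivative applied to $\partial_{x_j}P$ is a constant in $x$. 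This means no evaluation issues arise.

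\emph{Vanishing.} It remains to show that $\bigl(\sum_j y_j\partial_{x_j}\bigr)P(x)\cdot P(y)=0$ identically. The polarization operator $y\cdot\nabla_x$ is a derivation that sends $a_i(x)\mapsto a_i(y)$ and $\theta(x)\mapsto\theta(y)$. So $(y\cdot\nabla_x)P(x)$ is a sum of $m+1$ terms. In each term exactly one factor of $P(x)$ is replaced by the corresponding factor $a_i(y)$ or $\theta(y)$; the sign is $+$ since the coefficients $x_j$ are even.

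Multiplying any such term by $P(y)=a_1(y)\cdots a_m(y)\theta(y)$ produces a product containing that same odd element twice. That product is zero because odd elements of the supercommutative algebra $E[x,y]$ square to zero. Hence the bracket vanishes, and so does every coefficient extracted from it. This proves the identity, modulo the normalization check in the first step.
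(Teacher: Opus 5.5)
Your argument is correct, and it takes a genuinely different route from the paper's.

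\textbf{How the paper argues.} The paper first treats only the case $n=2m+2$ with $N_1,N_2$ disjoint ordinary sets. It expands in the monomial basis and checks that each monomial comes from exactly two terms of the sum, with opposite signs. It then reaches arbitrary multisets through the specialization homomorphism $\bigwedge^\bullet((U\oplus U')\otimes W)\to\bigwedge^\bullet((U\oplus U')\otimes V)$, with $\dim W=2m+2$, induced by a map $\lambda\colon\{1,\dots,2m+2\}\to\{1,\dots,n\}$. A count of internal permutations shows that this map sends $q_{\{i_1,\dots,i_{m+1}\}}$ to $q_{\{\lambda(i_1),\dots,\lambda(i_{m+1})\}}$.

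\textbf{How your argument differs.} You write every $q_N$ as $\partial_x^N P$ and read off the whole family of relations as coefficients of the single identity $\bigl((y\cdot\nabla_x)P(x)\bigr)P(y)=0$. Its vanishing comes from a repeated odd linear form, in the same spirit as the classical proof that decomposable tensors satisfy the Pl\"ucker relations.

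This handles multiplicities uniformly. The weights $\nu_j$ come from $\partial_y^{N_2}(y_jP(y))=\nu_j\,\partial_y^{N_2-e_j}P(y)$, and the factorials come from $\partial_x^N$. You should state why the other Leibniz term $y_j\,\partial_y^{N_2}P(y)$ vanishes: $\deg P=m+1<|N_2|$. Your route needs neither the auxiliary space $W$ nor any sign bookkeeping. The paper's argument is more concrete at the level of monomials, and its specialization map is essentially the restitution that your generating function performs implicitly.

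\textbf{The normalization.} The step you flag as the main obstacle is already settled by your own computation. The formula $\partial_x^N P=\nu_1!\cdots\nu_n!\sum\alpha_{1i_1}\cdots\alpha_{mi_m}\theta_j$, summed over ordered tuples with underlying multiset $N$, is exactly the definition of $q_N$ in the introduction. So $c=1$. This also matches the $m=1$ example $q_{\{i,i\}}=2\alpha_{1i}\theta_i$ and the paper's internal-permutation count. The Section 4 display, read literally, applies the factorial twice; it should be read with $\tilde q_N$ as the plain sum.

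Your fallback of changing the generating function to fit some other $N$-dependent normalization would not have worked, because the identity itself depends on the normalization. Take $m=1$, $N_1=\{2\}$, $N_2=\{1,1,2\}$, and write $q_{\{i,i\}}=c_{ii}\,\alpha_{1i}\theta_i$ and $q_{\{1,2\}}=c_{12}(\alpha_{11}\theta_2+\alpha_{12}\theta_1)$. Then the sum equals $(c_{11}c_{22}-4c_{12}^2)\,\alpha_{11}\theta_1\alpha_{12}\theta_2$. This vanishes for $c_{11}=c_{22}=2$, $c_{12}=1$, but not for $c_{11}=c_{22}=c_{12}=1$.
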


We first check it in the case when $N_1,N_2$ are ordinary sets:

\begin{lem}\label{ordpluker}
    Assume $n = 2m+2$. Fix $2$ disjoint (ordinary) sets $N_1, N_2$ of sizes $m$ and $m+2$, respectively. Then
    \[
        \sum_{j \in N_2} q_{N_1 \cup \{j\}} q_{N_2 \setminus \{j\}} = 0.
    \]
\end{lem}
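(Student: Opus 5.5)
The plan is a direct monomial-cancellation argument, in the same spirit as the proof of the linear relations above. Since $n=2m+2$ and $N_1,N_2$ are disjoint ordinary sets of sizes $m$ and $m+2$, we have $N_1\sqcup N_2=\{1,\dots,2m+2\}$. All multiplicities are $1$, so every $q_N$ appearing is
\[
q_N=\sum_{\sigma\colon \{1,\dots,m+1\}\xrightarrow{\sim} N}\alpha_{1\sigma(1)}\cdots\alpha_{m+1,\sigma(m+1)},
\]
recalling $\alpha_{m+1,j}=\theta_j$. The factors are written in row order, and each term has coefficient $1$.

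First I would expand each product $q_{N_1\cup\{j\}}\,q_{N_2\setminus\{j\}}$, with $j\in N_2$, into monomials. Each monomial is a product of $2(m+1)$ distinct odd generators. Every row $i$ contributes exactly two of them, $\alpha_{i a_i}$ from the first factor and $\alpha_{i b_i}$ from the second. The column sets $\{a_i\}$ and $\{b_i\}$ are disjoint with union $\{1,\dots,2m+2\}$.

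Next I would fix an underlying set of generators, i.e.\ an assignment to each row $i$ of a $2$-element column set $P_i$, and determine which summands $(j,\text{term})$ produce it. The first factor uses the $m+1$ columns $N_1\cup\{j\}$, one per row. Hence no $P_i$ can contain two elements of $N_1$, since otherwise the monomial does not occur at all. So the $m$ elements of $N_1$ lie in $m$ distinct rows. In each such row the $N_1$-column is forced into the first factor and the other column into the second. Exactly one row $r$ remains, with $P_r=\{k,l\}\subseteq N_2$. Then $j$ must equal $k$ or $l$, and everything else is determined. So each such monomial arises from exactly two summands: one with $j=k$, where $\alpha_{rk}$ lies in the first factor and $\alpha_{rl}$ in the second, and one with $j=l$, with the roles reversed. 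Both occur with coefficient $1$.

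The key step, and the only real obstacle, is the sign comparison. The two contributions are the same word of odd generators except that $\alpha_{rk}$ and $\alpha_{rl}$ have exchanged positions $p<q$ in the word. Exchanging two odd letters in a product of odd letters multiplies it by $(-1)^{2(q-p-1)+1}=-1$. Indeed, one moves the later letter left past $q-p-1$ letters and then past the earlier one, and then moves the earlier letter right past the same $q-p-1$ letters. Thus the two contributions cancel. Since every monomial in the expansion cancels in pairs, the sum vanishes. I would take care that the factor order $q_{N_1\cup\{j\}}q_{N_2\setminus\{j\}}$ is the same in every summand, so that no additional parity sign from reordering the two $q$'s enters. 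This settles Lemma~\ref{ordpluker}.
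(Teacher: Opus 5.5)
Your proposal is correct and follows essentially the same route as the paper. Both arguments group the expansion by monomial and observe that exactly $m$ of the $m+1$ row-pairs contain an $N_1$-column, so only the two summands with $j=k$ and $j=l$ contribute, and these have opposite signs. You additionally spell out the uniqueness of each contributing term and the transposition sign computation, both of which the paper leaves implicit.
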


\begin{proof}
    Rewrite it as linear combination of basis $\alpha_{1i_1}\alpha_{1j_1}\dots \alpha_{mi_m}\alpha_{mj_m}\alpha_{m+1,i_{m+1}}\alpha_{m+1,j_{m+1}}$ with $i_k < j_k$. Rewrite them in pairs: $(\alpha_{1i_1}\alpha_{1j_1})\cdots (\alpha_{mi_m}\alpha_{mj_m})(\alpha_{m+1,i_{m+1}}\alpha_{m+1,j_{m+1}})$. Then there are $m+1$ pairs in which exactly $m$ of them contain an $\alpha_{kl}$ such that $l\in N_1$. Denote the remaining pair by $(\alpha_{ki}\alpha_{kj})$. We know that only $q_{N_1 \cup \{j\}} q_{N_2 \setminus \{j\}}$ and $q_{N_1 \cup \{i\}} q_{N_2 \setminus \{i\}}$ contribute to the monomial and they have the opposite signs.
\end{proof}

We reduce the general case to the special case by considering the evaluation morphism.

\begin{proof}[Proof of Lemma \ref{pluckerrel}]
    Given an $m$-multisets $N_1 = \{1^{\nu_1},\dots,n^{\nu_n}\}$ and a $(m+2)$-multiset $N_2 = \{1^{\mu_1},\dots,n^{\mu_n}\}$, we consider an auxiliary vector space $W = \C w_1\oplus\dots\oplus \C w_{2m+2}$ and an assignment $\lambda\colon \{1,\dots,2m+2\}\to [n]$ which maps $1,2,3,\dots,m$ to $1^{\nu_1}|\dots|n^{\nu_n}$ and $m+1,m+2,\dots,2m+2$ to $1^{\mu_1}|\dots|n^{\mu_n}$.

    Then the assignment induces an evaluation ring map:
    \[
        \phi\colon \bigwedge^\bullet ((U\oplus U')\otimes W) \to \bigwedge^\bullet ((U\oplus U')\otimes V).
    \]
    We claim that for any ordinary set $\{i_1,\dots,i_{m+1}\}$, we have $\phi(q_{i_1,\dots,i_{m+1}}) = q_{\{\lambda(i_1),\dots,\lambda(i_{m+1})\}}$. In fact, let $\alpha_{1j_1}\dots \alpha_{m+1,j_{m+1}}$ be a monomial in $q_{\{\lambda(i_1),\dots,\lambda(i_{m+1})\}}$. Then a monomial $\alpha_{1\sigma(i_1)}\dots \alpha_{m+1, \sigma(i_{m+1})}$ in $q_{i_1,\dots,i_{m+1}}$ contributes to $\alpha_{1j_1}\dots \alpha_{m+1,j_{m+1}}$ if and only if $\sigma\in S_{\{i_1,\dots,i_{m+1}\}}$ is an internal permutation over the multiset $\{\lambda(i_1),\dots,\lambda(i_{m+1})\}$ (in which case the coefficient is $1$). If we write $\{\lambda(i_1),\dots,\lambda(i_{m+1})\} = \{1^{\tau_1},\dots,n^{\tau_n}\}$,  then there are  exactly $\tau_1!\dots\tau_n!$ such internal permutations, which proves the claim.

    By applying Lemma \ref{ordpluker} to $M_1 = \{1,\dots,m\}$ and $M_2 = \{m+1,\dots,2m+2\}$, we have 
    \[
        \sum_{j \in M_2} q_{M_1 \cup \{j\}} q_{M_2 \setminus \{j\}} = 0.
    \]
    The statement follows by applying $\phi$ to both sides.
\end{proof}

\subsection{Completeness of the relations}
\label{appendixA}

The generators $\alpha_{ij} \in \rH^0(\fn, E)$ span the representation $U \otimes V$ while the generators $q_N$ span the representation $\bigwedge^m U \otimes U' \otimes S^{m+1}(V)$. We will ignore $U'$ since it is 1-dimensional (and the powers of $U'$ we need to use are easily recoverable).

Hence, when $m$ is even, we have a $\fgl(U) \times \fgl(V)$-equivariant surjective ring homomorphism
\[
  \bigwedge^\bullet(U \otimes V) \otimes \bigwedge^\bullet(\bigwedge^m U \otimes S^{m+1} V) \xrightarrow{\Phi} \rH^0(\fn, E),
\]
and when $m$ is odd, we have instead
\[
  \bigwedge^\bullet(U \otimes V) \otimes S^\bullet(\bigwedge^m U \otimes S^{m+1} V) \xrightarrow{\Phi} \rH^0(\fn, E).
\]

Our goal is to compute generators for the ideal $\ker \Phi$.

Let's fix $U$ and consider $V$ as a variable vector space (see \cite[\S 5.3]{expos} for this perspective). Then both the domain and the codomain of $\Phi$ are polynomial functors in $V$, and so we can apply the transpose operation $\Omega$ (see \cite[\S 7.4]{expos}, where $\Omega (-)$ is written as $(-)^\dagger$). This is an autoequivalence of the category of polynomial functors such that $\Omega\bS_\lambda=\bS_{\lambda^T}$ (in particular, swaps exterior and symmetric powers). Given homogeneous functors $F$ and $G$, we have (see \cite[(7.4.8)]{expos})
\[
  \Omega(F \circ G) = 
  \begin{cases} 
  F \circ \Omega G & \text{if $\deg(G)$ is even} \\ 
  \Omega F \circ \Omega G & \text{if $\deg(G)$ is odd} 
  \end{cases}.
\]
Let $\Psi = \Omega(\Phi)$. Then we have (independent of the parity of $m$) a surjective $\fgl(U) \times \fgl(V)$-equivariant map:
\[
    S^\bullet(U \otimes V) \otimes S^\bullet(\bigwedge^m U \otimes \bigwedge^{m+1} V) \xrightarrow{\Psi} \Omega (\rH^0(\fn, E)).
  \]
  We have
  \[
    \Omega(\rH^0(\fn, E)) = \bigoplus_{\ell(\lambda) \le m+1} \bS_{(\lambda_1,\dots,\lambda_m)}(U) \otimes \bS_{\lambda}(V).
  \]
  This is a commutative ring which we can construct geometrically. 

  Let $X$ denote the Grassmannian of rank $m+1$ quotients of $V$. It has a tautological exact sequence of vector bundles
  \[
    0 \to \cR \to V \otimes \cO_X \to \cQ \to 0
  \]
  where $\rank \cR = \dim V - m - 1$ and $\rank \cQ = m+1$. We will use the setup of \cite[Chapter 6]{weyman}. Define
  \[
    \eta_1 = U \otimes \cQ, \qquad \eta_2 = \bigwedge^m U \otimes \bigwedge^{m+1} \cQ, \qquad \eta = \eta_1 \oplus \eta_2.
  \]
  Note that $\eta_2$ is a line bundle; in fact, it is the ample generator which gives $X$ its Pl\"ucker embedding. We can realize $\eta_1,\eta_2$ as quotients of trivial vector bundles. We record these as short exact sequences
  \begin{align*}
    0\to \xi_1 \to (U \otimes V) \otimes \cO_X \to \eta_1 \to 0,\qquad
    0\to \xi_2 \to (\bigwedge^m U \otimes \bigwedge^{m+1} V) \otimes \cO_X \to \eta_2 \to 0.
  \end{align*}
  Note that $\xi_1 = U \otimes \cR$ (but $\xi_2$ does not have a simple description). Define
  \[
    \xi = \xi_1 \oplus \xi_2.
  \]

  Before proceeding, we will need the Borel--Weil--Bott theorem for $X$ (see \cite[\S 4.1]{weyman}) in the following form.

  \begin{theorem}[Borel--Weil--Bott]
Given the bundle $\cE = \bS_\lambda \cQ \otimes \bS_\mu \cR$ we consider the sequence
  \[
    \alpha = (\lambda_1,\dots,\lambda_{m+1},\mu_1,\mu_2,\dots).
  \]
  Let $\rho = (0,-1,-2,\dots)$.
  \begin{enumerate}
  \item If $\alpha + \rho$ has repetitions, then $\rH^d(X, \cE)=0$ for all $d$.
  \item Otherwise, there is a unique permutation $\sigma$ such that $\beta := \sigma(\alpha+\rho)-\rho$ is weakly decreasing. Then
    \[
      \rH^{\ell(\sigma)}(X, \cE) \cong \bS_\beta V
    \]
    and all other cohomology groups vanish, where $\ell(\sigma) = |\{i<j \mid \sigma(i)>\sigma(j)\}|$.
  \end{enumerate}
\end{theorem}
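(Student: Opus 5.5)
This is the classical Borel--Weil--Bott theorem, and the natural route is to reduce it to the full flag variety of $V$, where Bott's theorem for line bundles applies. Let $F$ be the variety of complete flags of quotients $V=\cQ_n\twoheadrightarrow \cQ_{n-1}\twoheadrightarrow\cdots\twoheadrightarrow \cQ_1$ of $V$, with $\dim V = n$ and $\rank \cQ_k = k$. Let $\cL_\alpha$ be the line bundle attached to a weight $\alpha\in\mathbb{Z}^n$, normalised so that $\rH^0(F,\cL_\alpha)=\bS_\alpha V$ for dominant $\alpha$; the convention is fixed so that the first $m+1$ entries correspond to the quotient $\cQ=\cQ_{m+1}$ and the last $n-m-1$ entries to the subbundle $\cR$.

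The plan has three steps.

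\begin{enumerate}
\item \emph{Reduce from $F$ to $X$.} The forgetful map $\pi\colon F\to X$, $(\cQ_\bullet)\mapsto \cQ_{m+1}$, is a fibre bundle whose fibre is a product of two full flag varieties, one of $\cQ$ and one of $\cR$. By the relative Borel--Weil theorem on these fibres (the dominant case, i.e.\ Kempf vanishing fibrewise),
\[
\pi_*\cL_\alpha = \bS_\lambda\cQ\otimes\bS_\mu\cR \quad\text{and}\quad R^{>0}\pi_*\cL_\alpha=0,
\]
whenever $\lambda$ and $\mu$ are dominant. The Leray spectral sequence then gives $\rH^d(X,\cE)\cong \rH^d(F,\cL_\alpha)$, so it suffices to prove the statement for line bundles on $F$.
\item \emph{Base case on $F$.} Here I would follow Demazure's simple-reflection argument. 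For dominant $\alpha$, Borel--Weil identifies $\rH^0(F,\cL_\alpha)=\bS_\alpha V$, and Kempf vanishing gives the vanishing of higher cohomology.
\item \emph{Induction on the length of $\sigma$.} For a simple reflection $s_i$, consider the $\mathbb{P}^1$-fibration $p_i\colon F\to F_i$ obtained by forgetting $\cQ_i$. On each fibre, $\cL_\alpha$ restricts to $\cO(\langle\alpha+\rho,\epsilon_i-\epsilon_{i+1}\rangle-1)$. There are three cases.
\begin{itemize}
\item If $(\alpha+\rho)_i=(\alpha+\rho)_{i+1}$, this degree is $-1$, so all $R^j p_{i*}\cL_\alpha$ vanish. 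This gives part (1), since any repetition in $\alpha+\rho$ can be moved to adjacent positions by the same induction.
\item If the degree is $d\ge 0$, relative Serre duality on $\mathbb{P}^1$ together with the identification $R^1p_{i*}\cL_{s_i\cdot\alpha}\cong p_{i*}\cL_\alpha$ (Demazure's lemma) gives
\[
\rH^{j}(F,\cL_\alpha)\cong\rH^{j+1}(F,\cL_{s_i\cdot\alpha}), \qquad s_i\cdot\alpha=s_i(\alpha+\rho)-\rho.
\]
\item If $\alpha+\rho$ is strictly decreasing, we are in the base case of Step 2.
\end{itemize}
Inducting on the number of inversions of $\alpha+\rho$ then moves all cohomology into degree $\ell(\sigma)$, with value $\bS_\beta V$.
\end{enumerate}

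The main obstacle is the key isomorphism in Step 3, namely $R^1p_{i*}\cL_{s_i\cdot\alpha}\cong p_{i*}\cL_\alpha$. It requires identifying the relative dualizing sheaf of $p_i$ with $\cQ_{i-1}^{\vee}$-type twists, computed via the tautological sequences, and checking that the resulting line bundle is exactly $\cL_{s_i\cdot\alpha}$. This is precisely where the shift by $\rho=(0,-1,-2,\dots)$ enters. I would verify it first in the case $n=2$, where $F=\mathbb{P}^1$, and then globalise using the $\mathbb{P}(\cQ_{i+1}/\cQ_{i-1})$-bundle description of $p_i$.

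In the paper itself one simply cites \cite[\S 4.1]{weyman}, and I would do the same, adding the sign and indexing check that for our Grassmannian of quotients the weight is written as $(\lambda,\mu)$ with $\rho=(0,-1,-2,\dots)$.
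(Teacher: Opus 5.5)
Your proposal is correct and ends up taking the same route as the paper, which gives no argument of its own and simply cites \cite[\S 4.1]{weyman}. Your sketch is the standard proof behind that reference: push forward from the full flag variety of quotients using fibrewise Borel--Weil and Kempf vanishing, then run Demazure's $\mathbb{P}^1$-fibration induction on the number of inversions of $\alpha+\rho$. One point of precision: the fibre of $p_i$ is $\mathbb{P}(K)$ with $K=\ker(\cQ_{i+1}\to\cQ_{i-1})$ of rank $2$, not a quotient of $\cQ_{i+1}$, and $\omega_{p_i}\cong\cO(-2)\otimes\det K^{-1}$ is simply the line bundle of weight $\epsilon_{i+1}-\epsilon_i$. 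With that, the isomorphism $R^1p_{i*}\cL_{s_i\cdot\alpha}\cong p_{i*}\cL_\alpha$ that you flag as the main obstacle is a short weight check via relative Serre duality.
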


  \begin{proposition}
    We have an isomorphism of algebras
  \[
    \rH^0(X, S^\bullet(\eta)) = \Omega(\rH^0(\fn, E))
  \]
  and, for all $i>0$, we have
  \[
    \rH^i(X, S^\bullet(\eta))=0.
  \]
  In particular, $\Omega(\rH^0(\fn, E))$ is a normal ring with rational singularities.
  \end{proposition}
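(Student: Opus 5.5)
We compute $\rH^i(X, S^\bullet(\eta))$ degree by degree. Since $\eta = \eta_1 \oplus \eta_2$, we have
\[
S^\bullet(\eta) = \bigoplus_{a,b\ge 0} S^a(U\otimes \cQ)\otimes S^b\bigl(\textstyle\bigwedge^m U \otimes \bigwedge^{m+1}\cQ\bigr).
\]
Applying the Cauchy identity to the first factor gives
\[
S^a(U\otimes\cQ)=\bigoplus_{|\mu|=a,\ \ell(\mu)\le m}\bS_\mu U\otimes \bS_\mu \cQ .
\]
Here $\ell(\mu)\le \min(m,m+1)=m$ because $\dim U = m$. The second factor is the line bundle $(\bigwedge^m U)^{\otimes b}\otimes (\det\cQ)^{\otimes b}$. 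Tensoring, the summand is
\[
\bS_{\mu+(b^m)}U \otimes \bS_{\mu+(b^{m+1})}\cQ .
\]
Since $\cR$ does not appear, the Borel--Weil--Bott theorem applies with $\alpha=(\lambda,0,0,\dots)$ and $\lambda=\mu+(b^{m+1})$ a partition. Then $\alpha+\rho$ is already strictly decreasing, so $\sigma$ is the identity. Hence $\rH^0(X,\bS_\lambda\cQ)=\bS_\lambda V$ and all higher cohomology vanishes. This settles the vanishing of $\rH^i(X,S^\bullet(\eta))$ for $i>0$.

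Next, the global sections. The pair $(\mu,b)$ determines $\lambda=\mu+(b^{m+1})$ with $\ell(\lambda)\le m+1$: one recovers $b=\lambda_{m+1}$ and $\mu_i=\lambda_i-\lambda_{m+1}$. Conversely, every partition $\lambda$ of length at most $m+1$ arises exactly once. In terms of $\lambda$, the $U$-factor is $\bS_{(\lambda_1,\dots,\lambda_m)}U$. So
\[
\rH^0(X,S^\bullet\eta)\cong\bigoplus_{\ell(\lambda)\le m+1}\bS_{(\lambda_1,\dots,\lambda_m)}U\otimes\bS_\lambda V,
\]
which matches the stated decomposition of $\Omega(\rH^0(\fn,E))$ as $\fgl(U)\times\fgl(V)$-representations. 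The same computation can be read off from the $p=0$ case of Theorem~\ref{Irreducible-Decomposition} after applying $\Omega$.

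To upgrade this to an algebra isomorphism, I would compare both rings as quotients of the same polynomial ring $S^\bullet(U\otimes V)\otimes S^\bullet(\bigwedge^mU\otimes\bigwedge^{m+1}V)$. On one side, $\Psi$ surjects onto $\Omega(\rH^0(\fn,E))$. On the other, taking global sections of the surjection $S^\bullet((U\otimes V\oplus\bigwedge^mU\otimes\bigwedge^{m+1}V)\otimes\cO_X)\to S^\bullet(\eta)$ gives a ring map into $\rH^0(X,S^\bullet\eta)$. This map is surjective because each isotypic piece $\bS_\lambda V$ is generated by products of degree-one sections. I would argue that by Cartan-product or highest-weight considerations: the product of highest weight vectors is nonzero, since the ring is a domain as sections of a symmetric algebra on an irreducible variety.

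The main obstacle is identifying the two kernels. I would like to show that both rings are multiplicity-free with the same graded characters, and that the highest weight vector of weight $\lambda$ in each is the same monomial. On the $E$ side this is $\Omega$ applied to $a_\lambda=\pm\prod q_{\{i,\dots,i\}}\prod\alpha_{ij}$; on the $X$ side it is the corresponding product of Plücker section and $U\otimes\cQ$ sections. Multiplicity-freeness then forces both kernels to be the same $\fgl(U)\times\fgl(V)$-stable ideal. The one genuinely delicate point is that $\Omega$ is compatible with the multiplication, meaning it is a symmetric monoidal autoequivalence turning the supercommutative algebra into a commutative one. I will take this from \cite{expos}.

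Finally, the rational-singularities conclusion is Kempf's theorem in the form of \cite[Chapter 5]{weyman}. The variety $\Spec\,\rH^0(X,S^\bullet\eta)$ is the image of the collapsing of the total space of $\xi$ into the affine space $U\otimes V\oplus\bigwedge^mU\otimes\bigwedge^{m+1}V$. Vanishing of higher cohomology of $S^\bullet\eta$ gives normality and rational singularities.
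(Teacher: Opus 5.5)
Your route is the paper's. You use Cauchy plus Borel--Weil--Bott for the character and the vanishing, a Cartan-product argument in the domain $\rH^0(X,S^\bullet\eta)$ for surjectivity of $A:=S^\bullet(U\otimes V)\otimes S^\bullet(\bigwedge^m U\otimes\bigwedge^{m+1}V)\to \rH^0(X,S^\bullet\eta)$, and the Kempf collapsing for the singularities. The step that fails as you justify it is the one you flag yourself, the comparison of kernels.

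\textbf{Why the kernel step does not follow.} Two equivariant surjections from $A$ onto multiplicity-free rings with the same character, both nonzero on a common highest weight vector, need not have the same kernel. If a type occurred in $A$ with multiplicity $r>1$, each kernel would meet the $r$-dimensional space of highest weight vectors of that type in a hyperplane not containing your vector, and the two hyperplanes could differ.

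\textbf{What fixes it.} Every $\bS_{(\lambda_1,\dots,\lambda_m)}U\otimes\bS_\lambda V$ with $\ell(\lambda)\le m+1$ occurs in $A$ exactly once. First, the two types fix the bidegree: the $(a,b)$-piece of $A$ has $U$-types of size $a+mb$ and $V$-types of size $a+(m+1)b$, so $b=|\lambda|-(\lambda_1+\dots+\lambda_m)=\lambda_{m+1}$. Second, in $A_{(a,b)}=\bigoplus_\mu\bS_{\mu+(b^m)}U\otimes\bS_\mu V\otimes S^b(\bigwedge^{m+1}V)$ the $U$-type forces $\mu=(\lambda_1-b,\dots,\lambda_m-b)$. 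Third, $\bS_\lambda V=\bS_{\mu+(b^{m+1})}V$ is the Cartan component of $\bS_\mu V\otimes\bS_{(b^{m+1})}V$, and $\bS_{(b^{m+1})}V$ occurs once in $S^b(\bigwedge^{m+1}V)$. Every other constituent $\bS_\nu V$ there has $\nu$ strictly dominated by $(b^{m+1})$, so $\bS_\mu V\otimes\bS_\nu V$ only contains types dominated by $\mu+\nu$, which is strictly dominated by $\lambda$. Consequently the kernel of any equivariant surjection from $A$ onto a ring with this character is the sum of the isotypic components of $A$ whose type is absent from the target. This identifies $\ker\Psi$ with the kernel of the geometric map, with no need to match highest weight vectors. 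The paper's proof passes over this point just as quickly (``isomorphic as representations and multiplicity-free''), so this multiplicity-one fact is what is really being used there too.

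\textbf{Two smaller corrections.}
\begin{enumerate}
\item Products of the degree-one highest weight vectors only reach $V$-weights $(a+b,b,\dots,b)$. For general $\lambda$, the highest weight vector of the $\bS_\mu U\otimes\bS_\mu V$ piece is a product of minors. So before the domain argument applies, you must check that each $k\times k$ minor with $k\le m$ gives a nonzero section of $\bigwedge^kU\otimes\bigwedge^k\cQ$ (true since $\rank\cQ=m+1$). Alternatively, cite normality of determinantal rings for $S^\bullet\eta_1$, as the paper does.
\item The collapsing is of the total space of $\eta^*$ (the variety over $X$ with coordinate ring $S^\bullet\eta$) into $\bigl((U\otimes V)\oplus(\bigwedge^mU\otimes\bigwedge^{m+1}V)\bigr)^*$. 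It is not the total space of $\xi$, which is the bundle of linear equations. Weyman's Theorem 5.1.3 also requires this map to be birational onto its image. That holds because a nonzero component in $\eta_2^*$ recovers the point of $X$ through the Pl\"ucker embedding.
\end{enumerate}
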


  \begin{proof}
    Both algebras are isomorphic as $\fgl(U) \times \fgl(V)$-representations and are multiplicity-free.    We know what the right side looks like. For the left side, we have
    \begin{align*}
      \rH^0(X, S^\bullet(\eta))
      &= \bigoplus_{\mu, d} \rH^0(X, \bS_\mu U \otimes \bS_\mu \cQ \otimes (\bigwedge^m U)^{\otimes d} \otimes (\bigwedge^{m+1} \cQ)^{\otimes d})\\
      &= \bigoplus_{\mu, d} \rH^0(X, \bS_{d+\mu_1,\dots,d+\mu_m} U \otimes \bS_{d+\mu_1,\dots,d+\mu_m, d} \cQ)\\
      &= \bigoplus_{\ell(\lambda)\le m+1} \rH^0(X, \bS_{\lambda_1,\dots,\lambda_m} U \otimes \bS_\lambda \cQ)\\
      &= \bigoplus_{\ell(\lambda)\le m+1} \bS_{\lambda_1,\dots,\lambda_m} U \otimes \bS_\lambda V.
    \end{align*}
    where in the penultimate equality, we use the reparametrization $\lambda = (d+\mu_1,\dots,d+\mu_m,d)$.

    So we just need to show that the left hand side is generated by $\rH^0(X, \eta_1) = U \otimes V$ and $\rH^0(X, \eta_2) = \bigwedge^m U \otimes \bigwedge^{m+1} V$. By construction, $\rH^0(X, S^\bullet(\eta))$ is an integral domain.

    To this end, $\rH^0(X, S^\bullet(\eta_i))$ is generated by $\rH^0(X, \eta_i)$ for $i=1,2$. For $i=1$, this is normality of determinantal rings (see, for instance, \cite[Theorem 6.1.4]{weyman}); for $i=2$, this is projective normality of the Grassmannian under the Pl\"ucker embedding, (see, for instance, \cite[Theorem 7.1.2(a)]{weyman} with $\lambda = (1,\dots,1)$), but the next argument with highest weight vectors also works.

    In general, consider the map
    \[
      \rH^0(X, \bS_{\lambda} U \otimes \bS_\lambda \cQ) \otimes \rH^0(X, S^j(\bigwedge^m U \otimes \bigwedge^{m+1} \cQ)) \to \rH^0(X, \bS_{\lambda} U \otimes \bS_\lambda \cQ \otimes S^j(\bigwedge^m U \otimes \bigwedge^{m+1} \cQ)).
    \]
    The target is an irreducible representation and its highest weight is the sum of the highest weights for the two factors in the tensor product in the domain (this is an instance of a Cartan product). In particular, if we multiply the sections given by the highest weight vectors of the two representations in the domain, then we get a nonzero highest weight vector for the target. This proves the generation statement we wanted.
    
  The vanishing statement immediately follows from Borel--Weil--Bott. For the last statement, we consider the projection map from the total space of $\eta$ to the affine space $((U \otimes V) \oplus (\bigwedge^m U \otimes \bigwedge^{m+1} V))^*$. This map is birational onto its image, and it follows from \cite[Theorem 5.1.3]{weyman} that $\rH^0(X, S^\bullet \eta)$ is the normalization of its support and that it has rational singularities. (In fact, we just showed that $\rH^0(X, S^\bullet \eta)$ is generated by $\rH^0(X, \eta)$ so it coincides with its support.)
  \end{proof}
In particular, the Tor groups of $\rH^0(X, S^\bullet \eta)$ over $S^\bullet(U \otimes V) \otimes S^\bullet(\bigwedge^m U\otimes \bigwedge^{m+1} V)$ can be computed as in \cite[Theorem 5.1.2]{weyman}. We are interested in $\Tor_1$, which is the space of minimal generators for $\ker \Psi$, and this is given by
  \[
    \bigoplus_j \rH^{j}(X, \bigwedge^{j+1} \xi).
  \]

  First, we narrow which terms give nonzero contribution.

  \begin{proposition} \label{prop:nonzero-contribution}
    We have
    \[
      \bigoplus_j \rH^{j}(X, \bigwedge^{j+1} \xi) = \rH^1(X, \xi_1 \otimes \xi_2) \oplus \rH^1(X, \bigwedge^2 \xi_2).
    \]
  \end{proposition}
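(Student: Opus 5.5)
We will decompose $\bigwedge^{j+1}\xi=\bigoplus_{a+b=j+1}\bigwedge^a\xi_1\otimes\bigwedge^b\xi_2$ and show that every summand with $(a,b)\notin\{(1,1),(0,2)\}$ has vanishing $\rH^{a+b-1}$. The term $(a,b)=(1,0)$ is trivial, since $\rH^0(X,\xi_1)=0$: we have $\rH^0(X,\cR)=0$. The term $(0,1)$ also has $\rH^0(X,\xi_2)=0$. This follows from the long exact sequence of $0\to\xi_2\to \bigwedge^mU\otimes\bigwedge^{m+1}V\otimes\cO_X\to\eta_2\to0$, because the map on global sections is the identity $\bigwedge^{m+1}V\to\rH^0(X,\bigwedge^{m+1}\cQ)$ tensored with $\bigwedge^m U$.

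For the $\xi_1$ factor we use the Cauchy formula: $\bigwedge^a(U\otimes\cR)=\bigoplus_{|\mu|=a}\bS_\mu U\otimes\bS_{\mu^T}\cR$. By Borel--Weil--Bott, $\bS_{\mu^T}\cR$ (with $\cQ$-part zero) has cohomology only in degree $\ell(\sigma)$, and $\ell(\sigma)$ is at most $(m+1)\cdot\mu^T_1=(m+1)\ell(\mu)$. More precisely, $\alpha+\rho=(0,-1,\dots,-m,\mu^T_1-m-1,\dots)$ is nonrepeating only if $\mu^T_k-m-k\le -m-1$ or each $\mu^T_k$ jumps past all $m+1$ entries. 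So $\rH^\bullet\neq0$ forces $\rH^{(m+1)r}$ with $r=\ell(\mu^T)=\mu_1$, and $\mu_1\le\dim U=m$. Then $(m+1)r$ versus $a-1$ with $a\ge r\cdot(m+1)$... we compare degrees.

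To handle $\xi_2$, we resolve it by an Eagon--Northcott/Koszul-type filtration. The easier route is to twist: $\bigwedge^b\xi_2$ sits in the exact Koszul complex $0\to\bigwedge^b\xi_2\to\bigwedge^b(\bigwedge^mU\otimes\bigwedge^{m+1}V)\otimes\cO\to\bigwedge^{b-1}(\cdots)\otimes\eta_2\to\cdots\to S^b\eta_2\to0$, where the middle terms are trivial bundles tensored with $\eta_2^{\otimes k}=\cO(k)$. Tensoring this complex with $\bigwedge^a\xi_1$, we compute $\rH^{j}(X,\bigwedge^a\xi_1\otimes\bigwedge^b\xi_2)$ by hypercohomology, using the terms $\rH^{j-k}(X,\bS_{\mu^T}\cR\otimes(\bigwedge^{m+1}\cQ)^{k})$ for $0\le k\le b$. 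Each of these is again given by BWB, applied to $\alpha=(k^{m+1},\mu^T)$. In $\alpha+\rho$ the $\cR$-entries $\mu^T_i-m-i$ must either fall below $k-m$ (no swap) or exceed $k$, so the nonzero cohomological degree is a multiple of $m+1$ determined by $k$ and $\mu$. We then check that the degree $j-k=a+b-1-k$ can equal such a value only when $(a,b)\in\{(1,1),(0,2)\}$. Roughly, swapping requires $\mu^T_i\ge k+m+1$, so $a\ge(m+1)\cdot\#\text{swaps}$ while the degree is $(m+1)\cdot\#\text{swaps}\le a$. Combining this with $j-k=a+b-1-k$, the resulting inequality $a+b-1-k=(m+1)s$ with $a\ge (m+1+k)s$ forces $s=0$ once $b$ is small relative to $a$, and a separate count handles $s=0$.

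The main obstacle is exactly this bookkeeping: the $s=0$ case (global sections, degree $j-k=0$, i.e.\ $k=a+b-1\le b$, so $a\le1$) and the $s\ge1$ cases must be shown to leave only $a\le1$ with $b\le 2$. When $a=0$ we need $k=b-1$, and we must show that the degree-$0$ contributions cancel in the hypercohomology spectral sequence for $b\ge3$. That cancellation comes from exactness of $\bigwedge^\bullet(\bigwedge^{m+1}V)\otimes S^\bullet(\bigwedge^{m+1}V)\to\cdots$ (Koszul exactness after taking global sections, using projective normality of the Pl\"ucker embedding), leaving only the quadratic Pl\"ucker syzygy term $\rH^1(\bigwedge^2\xi_2)$. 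Similarly, for $a=1$ the contribution survives only at $b=1$.
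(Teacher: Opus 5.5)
Your skeleton matches the paper's: split $\bigwedge^{j+1}\xi$ by bidegree $(a,b)$, apply Cauchy to $\xi_1$, resolve $\bigwedge^b\xi_2$ by the exterior power of the sequence defining $\xi_2$, and use Borel--Weil--Bott on each term. The key input, however, is misidentified. In $\bigwedge^a(U\otimes\cR)=\bigoplus_\mu\bS_\mu U\otimes\bS_{\mu^T}\cR$, the condition forced by $\dim U=m$ is $\ell(\mu)\le m$, i.e.\ $\mu^T_1\le m$, not $\mu_1\le m$. With the correct condition, look at $\alpha+\rho=(k,k-1,\dots,k-m,\mu^T_1-m-1,\dots)$. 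The entry $\mu^T_1-m-1\le -1<k$ either lands in $[k-m,k-1]$, giving a repetition, or lies below $k-m$. So no $\cR$-entry ever moves past the $\cQ$-block, and every $\bS_{\mu^T}\cR\otimes S^k(\bigwedge^{m+1}\cQ)$ has no higher cohomology (Lemma~\ref{lem:F-terms-vanish}); that is, $s=0$ always. With the condition you wrote, the $s\ge 1$ bookkeeping cannot close. Take $(a,b)=(m+2,0)$, $k=0$, $\mu=(1^{m+2})$. Then $\mu_1=1\le m$, and your relations $a+b-1-k=(m+1)s$ and $a\ge (m+1+k)s$ hold with $s=1$. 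Indeed $\rH^{m+1}(X,S^{m+2}\cR)\cong\bigwedge^{m+2}V\neq 0$ when $\dim V\ge m+2$, in exactly the degree $a+b-1$ you must kill. This summand of $\rH^{m+1}(X,\bigwedge^{m+2}\xi_1)$ vanishes only because $\bS_{(1^{m+2})}U=0$.

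Even with $s=0$ in hand, the two surviving cases are asserted rather than proved.

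For $a=1$, $b\ge 2$, the group $\rH^b(X,\cR\otimes\bigwedge^b\xi_2)$ is the cokernel of $\bigwedge^{m+1}V\otimes\rH^0(X,\cR\otimes S^{b-1}\bigwedge^{m+1}\cQ)\to\rH^0(X,\cR\otimes S^{b}\bigwedge^{m+1}\cQ)$. Saying ``similarly'' does not show this map is surjective. The paper proves it (Lemma~\ref{lem:F-surject}) by noting that the target is the irreducible $\bS_{(b,\dots,b,1)}V$ and that the product of highest weight vectors is nonzero.

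For $a=0$, $b\ge 3$, the complex of global sections is $\bigwedge^{b-k}(\bigwedge^{m+1}V)\otimes R_k$, with $R$ the Pl\"ucker coordinate ring, not $\bigwedge^{b-k}(\bigwedge^{m+1}V)\otimes S^k(\bigwedge^{m+1}V)$. Exactness of the Koszul complex of the polynomial ring together with projective normality only gives exactness at the last spot. The homology at position $b-1$ is precisely the space of degree-$b$ minimal generators of the Pl\"ucker ideal. So you need the theorem that this ideal is generated by quadrics, which is what the paper invokes.
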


  To prove this, we first use the Cauchy identity \cite[Theorem 2.3.2]{weyman}
  \[
    \bigwedge^\bullet \xi_1 = \bigoplus_\lambda \bS_{\lambda^T} U \otimes \bS_\lambda \cR.
  \]
  The sum is over $\lambda$ such that $\lambda_1 \le m$ and $\ell(\lambda) \le \dim V - m-1$.
  Take the $i$th exterior power of the short exact sequence defining $\xi_2$ to get
  \begin{align} \label{eqn:ext-xi2}
    0 \to \bigwedge^i \xi_2 \to \cF(i)_0 \to \cdots \to \cF(i)_i \to 0
  \end{align}
  where
  \[
    \cF(i)_j = \bigwedge^{i-j}(\bigwedge^{m+1} V) \otimes  S^j(\bigwedge^{m+1} \cQ).
  \]
  We will use this to consider the cohomology groups $\rH^{|\lambda|+i-1}(X, \bS_\lambda \cR \otimes \bigwedge^i \xi_2)$ together with the following standard argument.

  \begin{lemma} \label{lem:LES-vanish}
    Let $0 \to \cE \to \cF_0 \to \cdots \to \cF_n \to 0$ be a long exact sequence of coherent sheaves. For fixed $d$, if $\rH^{d-j}(\cF_j)=0$ for all $j$, then $\rH^d(\cE)=0$.
  \end{lemma}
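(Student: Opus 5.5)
We argue by splitting the long exact sequence into short exact sequences and climbing back from the right end by induction on the length $n$. Define the kernels $\cK_j = \ker(\cF_j \to \cF_{j+1})$ for $0 \le j \le n$. Exactness then gives $\cK_0 = \cE$ and $\cK_n = \cF_n$, together with short exact sequences
\[
0 \to \cK_j \to \cF_j \to \cK_{j+1} \to 0, \qquad 0 \le j \le n-1.
\]
All of these are coherent, since kernels and images of maps of coherent sheaves are coherent. Strictly speaking, only the fact that they are sheaves of abelian groups is needed, since only long exact sequences in cohomology are used.

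The key claim is that $\rH^{d-j}(\cK_j) = 0$ for every $j = n, n-1, \dots, 0$. We prove it by descending induction on $j$.

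The base case is $j = n$. Here $\cK_n = \cF_n$, so the vanishing $\rH^{d-n}(\cK_n) = 0$ is exactly the hypothesis.

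For the inductive step, suppose $\rH^{d-j-1}(\cK_{j+1}) = 0$. The long exact cohomology sequence of $0 \to \cK_j \to \cF_j \to \cK_{j+1} \to 0$ contains the segment
\[
\rH^{d-j-1}(\cK_{j+1}) \to \rH^{d-j}(\cK_j) \to \rH^{d-j}(\cF_j).
\]
The left term vanishes by the induction hypothesis, and the right term vanishes by assumption. Therefore $\rH^{d-j}(\cK_j) = 0$.

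Taking $j = 0$ gives $\rH^d(\cE) = \rH^d(\cK_0) = 0$, which is what we wanted.

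Two small points need care. First, negative cohomological degrees occur when $j > d$; those groups are zero by convention, so both the hypothesis and the induction hold trivially there. Second, the indexing must be right: the degree of $\cF_j$ drops by exactly one for each step to the right, which is what makes the connecting homomorphisms line up. There is no real obstacle here; this is the standard dimension-shifting argument. Equivalently, one can phrase it through the hypercohomology spectral sequence of the resolution $\cE \to \cF_\bullet$, where every $E_1$-term contributing to total degree $d$ vanishes.
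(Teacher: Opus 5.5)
Your proof is correct and is essentially the paper's argument. The paper inducts on $n$ by replacing the last two terms $\cF_{n-1}\to\cF_n$ with their kernel, and your descending induction on the kernels $\cK_j$, via the connecting maps $\rH^{d-j-1}(\cK_{j+1})\to\rH^{d-j}(\cK_j)$, is that same dimension-shifting argument written out step by step.
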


  \begin{proof}
    Do induction on $n$; to use the induction step, we replace the last two terms $\cF_{n-1} \to \cF_n$ by the kernel of the map.
  \end{proof}

  \begin{lemma} \label{lem:F-terms-vanish}
    If $\lambda_1 \le m$, then $\rH^d(X, \bS_\lambda \cR \otimes S^j(\bigwedge^{m+1} \cQ))=0$ for all $d>0$.
  \end{lemma}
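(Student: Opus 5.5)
The plan is to apply the Borel--Weil--Bott theorem stated above directly. Since $S^j(\bigwedge^{m+1}\cQ) = \bS_{(j,\dots,j)}\cQ$, with $m+1$ entries equal to $j$, the bundle in question is $\bS_{(j^{m+1})}\cQ \otimes \bS_\lambda \cR$. The associated sequence is
\[
  \alpha = (j,\dots,j,\lambda_1,\lambda_2,\dots),
\]
with $j$ repeated $m+1$ times and $\lambda$ padded by zeros to length $\dim V - m - 1$. The goal is to show that $\alpha+\rho$ either has a repetition, giving total vanishing, or is already strictly decreasing, giving $\sigma = \mathrm{id}$ and cohomology concentrated in degree $0$. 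Either outcome yields $\rH^d = 0$ for $d>0$.

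Compute $\alpha+\rho$ explicitly. Its first $m+1$ entries are $j, j-1, \dots, j-m$. The remaining entries are $\lambda_k - (m+k)$ for $k \ge 1$. The tail $\lambda_k-(m+k)$ is already strictly decreasing because $\lambda$ is a partition, and likewise the head is strictly decreasing. So any failure of strict decrease must occur at the junction. Moreover, if all of $\lambda_1-(m+1), \lambda_2-(m+2), \dots$ are smaller than $j-m$, the sequence is strictly decreasing and only $\rH^0$ survives.

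The key step is the junction. Suppose some tail entry satisfies $\lambda_k-(m+k) \ge j-m$. Since $\lambda_k \le \lambda_1 \le m$, we get $\lambda_k - (m+k) \in [-(m+k), m-(m+k)] = [-m-k, -k]$. I do not expect a single bound like this to settle matters. Instead, I will note that $\lambda_k-(m+k)$ is an integer lying in the range $[j-m, j]$ of head values whenever it is at least $j-m$, because $\lambda_k - (m+k) \le -k < 0 \le j$. The head $\{j-m,\dots,j\}$ consists of consecutive integers, so any tail entry that is at least $j-m$ and at most $j$ coincides with some head entry. That gives a repetition and total vanishing.

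The upshot is that exactly two cases arise. Either every tail entry is below $j-m$, in which case $\sigma=\mathrm{id}$ and only $\rH^0$ can be nonzero. Or some tail entry falls in the consecutive block $[j-m,j]$, in which case there is a repetition and all cohomology vanishes. The main point to check carefully is the upper bound $\lambda_k-(m+k)\le j$. This is exactly where the hypothesis $\lambda_1\le m$ enters: it forces $\lambda_k - m - k \le -k \le -1 < j$ for $j \ge 0$, so no tail entry can exceed the top of the head and require a nontrivial permutation.
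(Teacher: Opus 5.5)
Your proposal is correct and is essentially the paper's proof: apply Borel--Weil--Bott to $\alpha=(j,\dots,j,\lambda_1,\lambda_2,\dots)$, with $j$ repeated $m+1$ times, and observe that $\alpha+\rho$ either has a repetition or is strictly decreasing. The paper states this dichotomy in one line. You supply the detail it leaves implicit: a tail entry with $\lambda_k-m-k\ge j-m$ satisfies $\lambda_k-m-k\le -1<j$, so it lands in the consecutive block $\{j-m,\dots,j\}$ and repeats a head entry.
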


  \begin{proof}
    The weight associated with the homogeneous bundle in question is
    \[
      ( \underbrace{j, \dots, j}_{m+1}, \lambda_1, \lambda_2, \dots).
    \]
    Adding $\rho$ gives
    \[
      (j, j-1, \dots, j-m, \lambda_1 -m-1, \lambda_2 - m-2,\dots).
    \]
    Since $j \ge 0$ and $\lambda_1 \le m$, this sequence either has a repetition or is decreasing, so the sheaf has no higher cohomology. 
  \end{proof}
  
  \begin{lemma} \label{lem:F-surject}
    For all $i \ge 2$, the map $\rH^0(X, \cR \otimes \cF(i)_{i-1}) \to \rH^0(X, \cR \otimes \cF(i)_i)$ is surjective.
  \end{lemma}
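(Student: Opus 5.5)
The plan is to compute both spaces of global sections with Borel--Weil--Bott, observe that the target is irreducible, and then show the map is nonzero using a product of highest weight sections. Write $\cL = \bigwedge^{m+1}\cQ$ for the Pl\"ucker line bundle. By definition,
\[
\cR\otimes\cF(i)_{i-1} = \bigwedge^{m+1}V\otimes \cR\otimes \cL^{\otimes(i-1)}, \qquad \cR\otimes\cF(i)_i = \cR\otimes\cL^{\otimes i}.
\]
The map in question is $\cR$ tensored with the multiplication map $\bigwedge^{m+1}V\otimes\cL^{\otimes(i-1)}\to\cL^{\otimes i}$. This multiplication map is induced by the surjection $\bigwedge^{m+1}V\otimes\cO_X\to\cL$, which in turn comes from the tautological sequence. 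In particular, the map is $\GL(V)$-equivariant. On global sections it is therefore the composite
\[
\bigwedge^{m+1}V\otimes \rH^0(X,\cR\otimes\cL^{\otimes(i-1)})\to \rH^0(X,\cR\otimes\cL^{\otimes i}),
\]
that is, multiplication of sections.

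First I compute both $\rH^0$'s with the Borel--Weil--Bott theorem. For the bundle $\bS_{(k^{m+1})}\cQ\otimes\bS_{(1)}\cR = \cL^{\otimes k}\otimes \cR$, the weight is $\alpha=(k,\dots,k,1,0,\dots)$. Adding $\rho$ gives
\[
(k,k-1,\dots,k-m,-m,-m-2,\dots).
\]
This sequence is strictly decreasing exactly when $k-m>-m$, i.e. when $k\ge1$. For $i\ge 2$ both $k=i-1$ and $k=i$ satisfy $k\ge1$. Hence $\rH^0(X,\cR\otimes\cL^{\otimes(i-1)})\cong \bS_{((i-1)^{m+1},1)}V$ and $\rH^0(X,\cR\otimes\cL^{\otimes i})\cong\bS_{(i^{m+1},1)}V$. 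In particular the target is an irreducible $\fgl(V)$-representation. This is the only place the hypothesis $i\ge2$ enters: for $i=1$ the source term $\cR$ has no sections.

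Since the target is irreducible and the map is equivariant, it suffices to show the map is nonzero. I argue exactly as in the Cartan product argument used above for generation of $\rH^0(X,S^\bullet\eta)$. Take $s$ to be a highest weight vector of $\bigwedge^{m+1}V=\rH^0(X,\cL)$, which is a nonzero Pl\"ucker section. Take $t$ to be a highest weight section of $\cR\otimes\cL^{\otimes(i-1)}$. Then $s\cdot t$ is a section of $\cR\otimes\cL^{\otimes i}$, and it is nonzero: $X$ is integral, $s$ is a nonzero section of a line bundle, and $t$ is a nonzero section of a vector bundle, so their product cannot vanish on the dense open set where $s\ne0$ and $t\ne0$. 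Moreover $s\cdot t$ is killed by the strictly upper-triangular part of $\fgl(V)$, since the Leibniz rule applies to each factor. Its weight is $(1^{m+1})+((i-1)^{m+1},1)=(i^{m+1},1)$. So $s\cdot t$ is a nonzero highest weight vector of the target, the image is a nonzero subrepresentation of an irreducible representation, and the map is therefore surjective.

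The main point requiring care is the nonvanishing of $s\cdot t$. Working locally on an open set trivializing $\cL$ (a Pl\"ucker chart), this reduces to the statement that a nonzero function times a nonzero section of a locally free sheaf on an integral scheme is nonzero, which is immediate.
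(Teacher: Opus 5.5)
Your proof is correct and follows the paper's argument. Both compute the source and target sections as $\bigwedge^{m+1}V\otimes\bS_{(i-1,\dots,i-1,1)}V$ and the irreducible $\bS_{(i,\dots,i,1)}V$, then observe that the product of highest weight sections is a nonzero highest weight vector of the target. You also spell out the Borel--Weil--Bott check and the integrality argument for why that product is nonzero, both of which the paper leaves implicit.
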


  \begin{proof}
    The map $\cF(i)_{i-1} \to \cF(i)_i$ is the composition 
    \[
      \bigwedge^{m+1} V \otimes  S^{i-1}(\bigwedge^{m+1} \cQ) \to \bigwedge^{m+1} \cQ \otimes  S^{i-1}(\bigwedge^{m+1} \cQ) \to S^i(\bigwedge^{m+1} \cQ),
      \]
    where the first map is the natural surjection and the second map is multiplication.     We have
    \begin{align*}
      \rH^0(X, \cR \otimes \cF(i)_{i-1}) &= \bigwedge^{m+1} V \otimes \bS_{(i-1,\dots,i-1,1)} V, \\
      \rH^0(X, \cR \otimes \cF(i)_i) &= \bS_{(i,\dots,i, 1)} V.
    \end{align*}
    The product of the highest weight vectors from $\bigwedge^{m+1} V$ and $\bS_{(i-1,\dots,i-1,1)} V$ gives a (nonzero) highest weight vector for $\bS_{(i,\dots,i,1)} V$; since the latter is irreducible, this proves the surjectivity statement.
  \end{proof}

Now we can finish the proof of Proposition~\ref{prop:nonzero-contribution}.
  
  \begin{proof}[Proof of Proposition~\ref{prop:nonzero-contribution}]
    Consider the cohomology
    \[
      \rH^{d+e-1}(X, \bigwedge^d \xi_1 \otimes \bigwedge^e \xi_2) = \bigoplus_{|\lambda|=d} \bS_{\lambda^T} U  \otimes \rH^{d+e-1}(X, \bS_\lambda \cR \otimes \bigwedge^e \xi_2).
    \]
    First suppose $d \ge 2$. In this case, we take \eqref{eqn:ext-xi2} with $i=e$ and tensor with $\bS_\lambda \cR$. By Lemma~\ref{lem:F-terms-vanish}, the terms $\bS_\lambda \otimes \cF(e)_j$ all have vanishing higher cohomology. Hence Lemma~\ref{lem:LES-vanish} implies that $\rH^{d+e-1}(X, \bS_\lambda \cR \otimes \bigwedge^e \xi_2)=0$ since $d+e-1-j > 0$ for $j \le e$.

    Now consider $d=1$ and $e \ge 2$. Let $\cK(e)$ be the kernel of $\cF(e)_{e-1} \to \cF(e)_e$.  Then we consider the long exact sequence
    \[
      0 \to \cR \otimes \bigwedge^e \xi_2 \to \cR \otimes \cF(e)_0 \to \cdots \to \cR \otimes \cF(e)_{e-2} \to \cR \otimes \cK(e) \to 0.
    \]
    For $j \le e-2$, we can again apply Lemma~\ref{lem:F-terms-vanish} to see that $\rH^{e-j}(X, \cR \otimes \cF(e)_j)=0$. By Lemma~\ref{lem:F-surject}, $\rH^1(X, \cR \otimes \cK(e))=0$. Hence Lemma~\ref{lem:LES-vanish} implies that $\rH^{e}(X, \cR \otimes \bigwedge^e \xi_2)=0$.

    Finally, we consider $d=0$. The algebra $\rH^0(X, S^\bullet(\bigwedge^m U \otimes \bigwedge^{m+1} \cQ))$ is the homogeneous coordinate ring of the Pl\"ucker embedding of $X$. Hence, $\bigoplus_j \rH^{j-1}(X, \bigwedge^j \xi_2)$ computes the minimal generators of this ideal. But it is known to be generated by quadratic equations (the Pl\"ucker relations) so this is only nonzero for $j=2$.
  \end{proof}

  Let's make Proposition~\ref{prop:nonzero-contribution} more concrete. First, we can consider $\rH^0(X, S^\bullet \eta)$ as a bi-graded ring with $\deg \rH^0(X, S^i \eta_1 \otimes S^j \eta_2) = (i,j)$. Hence we can explicitly identify the space of degree $(1,1)$ relations as
  \begin{align*}
    \rH^1(X, \xi_1 \otimes \xi_2) &= \ker  \left( \rH^0(X, \eta_1) \otimes \rH^0(X, \eta_2) \to \rH^0(X, \eta_1 \otimes \eta_2) \right)\\
                                  &=\ker \left( (U \otimes V ) \otimes (\bigwedge^m U\otimes  \bigwedge^{m+1} V) \to \bS_{2,1,\dots,1} U \otimes \bS_{2,1,\dots,1} V \right).
  \end{align*}
  Since $\dim U = m$, we have $U \otimes \bigwedge^m U = \bS_{2,1,\dots,1} U$ and by the Pieri rule (see \cite[(3.10)]{expos}), we have
  \[
    V \otimes \bigwedge^{m+1} V \cong \bigwedge^{m+2} V \oplus \bS_{2,1,\dots,1} V.
  \]
  Since the map is surjective, we conclude that
  \[
    \rH^1(X, \xi_1 \otimes \xi_2) = \bigwedge^m U \otimes U \otimes \bigwedge^{m+2} V.
  \]
  If we apply $\Omega$, this gives the subspace
  \[
    \bigwedge^m U \otimes U \otimes S^{m+2} V \subseteq \ker \Phi.
  \]

  This is spanned by the ``linear'' relations (fix $1 \le i \le m$ and an $(m+2)$-multiset $N$ of $\{1,\dots,n\}$):
  \[
    \sum_{j \in N} \alpha_{ij} \otimes q_{N \setminus \{j\}}.
  \]
The highest weight vector is given by $i=1$ and $N = \{1,\dots,1\}$.

Next, as stated in the proof, $\rH^1(X, \bigwedge^2 \xi_2)$ are the Pl\"ucker relations on $\bigwedge^m U \otimes \bigwedge^{m+1} V$. If we apply $\Omega$, then these equations are the kernel of (if $m$ is even)
\[
  \bigwedge^2(S^{m+1} V) \to \bS_{(m+1,m+1)} V
\]
or (if $m$ is odd)
\[
  S^2(S^{m+1} V) \to \bS_{(m+1,m+1)} V.
\]
In both cases, call the map $F$.

These equations can be written as follows: fix two multisets $N_1$ and $N_2$ of sizes $m$ and $m+2$, respectively. Then the odd analogues of the ``Pl\"ucker relations'' are
\[
  \sum_{j \in N_2} q_{N_1 \cup \{j\}} q_{N_2 \setminus \{j\}}.
\]

\begin{proposition} \label{prop:odd-plucker}
  The odd Pl\"ucker relations span $\ker F$.
\end{proposition}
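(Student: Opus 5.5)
We have to show that the span of the elements $\sum_{j\in N_2} q_{N_1\cup\{j\}}\, q_{N_2\setminus\{j\}}$, viewed in $\bigwedge^2(S^{m+1}V)$ or $S^2(S^{m+1}V)$ according to the parity of $m$, is all of $\ker F$. We already know two things. By Lemma~\ref{pluckerrel}, these elements lie in $\ker F$. By transporting the Pl\"ucker relations through $\Omega$, $\ker F$ is $\Omega$ applied to the kernel of $\bigwedge^m U\otimes\bigwedge^{m+1}V$-quadrics $S^2(\bigwedge^{m+1}V)\to \bS_{(2^{m+1})}V$. Concretely, by the plethysm decomposition of $S^2(\bigwedge^{m+1}V)$, this kernel is $\bigoplus_{k\ge 1} \bS_{(2^{m+1-2k},1^{4k})}V$ (the sum over $k\geq 1$ such that the column lengths make sense). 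Applying $\Omega$, we get
\[
\ker F \cong \bigoplus_{k\ge1}\bS_{(m+1+2k,\,m+1-2k)}V .
\]
So $\ker F$ is a multiplicity-free $\fgl(V)$-representation. The span $R$ of the odd Pl\"ucker relations is $\fgl(V)$-stable, because the family of relations is permuted, up to scalars, under the action of $\fgl(V)$; alternatively, $R$ is the image of the equivariant map $S^mV\otimes S^{m+2}V\to \ker F$ given by $(N_1,N_2)\mapsto \sum_{j} q_{N_1\cup j}q_{N_2\setminus j}$. Hence it suffices to show that every irreducible constituent $\bS_{(m+1+2k,m+1-2k)}V$ of $\ker F$ meets $R$ nontrivially.

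To do this, I use that only two basis vectors $v_1,v_2$ are involved, so I may reduce to $\dim V=2$ by restricting to the weight spaces involving only indices $1,2$. The highest weight vector of $\bS_{(a,b)}V$ lies there, and the $\fgl_2$-equivariant map above restricts accordingly. For $\dim V=2$, identify $S^{m+1}V$ with binary forms of degree $m+1$ and write $q_{\{1^{r},2^{m+1-r}\}}$ in terms of the monomial basis $x^ry^{m+1-r}$, with the factorial normalizations. The relation map $S^m V\otimes S^{m+2}V\to \bigwedge^2$ or $S^2(S^{m+1}V)$ is then, up to these normalizations, the transvectant-type operator
\[
f\otimes g\mapsto \sum_{j} (f\cdot \partial_j\text{-shift})\wedge(\cdots),
\]
that is, the polarization $f\otimes g \mapsto \sum_{i} (x_i f)\cdot(\partial_{x_i} g)$ placed in the symmetric or exterior square.

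The key step is the following. By Pieri, $S^mV\otimes S^{m+2}V=\bigoplus_{t=0}^{m}\bS_{(2m+2-t,t)}V$, and I claim this map is nonzero on each component with $t=m+1-2k$ for $k \geq 1$, that is, $t\equiv m+1 \pmod 2$ and $t\le m-1$. I would check this by evaluating on the highest weight vector of $\bS_{(2m+2-t,t)}$ inside $S^m\otimes S^{m+2}$, namely $\sum_s (-1)^s\binom{t}{s} x^{m-t+s}y^{t-s}\otimes x^{m+2-s}y^{s}$ up to normalization. I would then compute the coefficient of a single convenient monomial $q_{a}q_{b}$ in its image and show it is nonzero. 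An easy candidate is the extreme term $q_{\{1^{m+1}\}}\,q_{\{1^{m+1-t},2^{t}\}}$, which receives contributions from only one or two values of $s$. This coefficient computation is the main obstacle, because factorials and signs from the $\nu!$ normalizations might cancel. If direct evaluation is messy, my fallback is a dimension count. The components with $t\not\equiv m+1\pmod 2$ map into the complement $\bS_{(m+1,m+1)}$-free part only via vanishing, or into $\bS_{(2m+2-t,t)}$, which is absent from the square. Combining this with surjectivity of the multiplication $S^m\otimes S^{m+2}\to \bS_{(m+1,m+1)}$-complement, by comparing with the classical Pl\"ucker relations after applying $\Omega$ and the evaluation trick of Lemma~\ref{pluckerrel} with $\dim V=2m+2$, would give nonvanishing on the needed components.

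Granting nonvanishing on each constituent, Schur's lemma and multiplicity-freeness give $R=\ker F$.
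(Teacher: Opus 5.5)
\textbf{Gap: the nonvanishing step is never proved.} Your reduction is sound. The quotient $\ker F$ is multiplicity-free, with constituents $\bS_{(2m+2-t,t)}V$ for $t\equiv m+1 \pmod 2$ and $t\le m-1$. The span $R$ of the odd Pl\"ucker relations is the image of the equivariant map $\Sigma=\pi\circ\sigma$, where $\sigma(f\otimes g)=\sum_i x_if\otimes\partial_{x_i}g$ and $\pi$ projects $S^{m+1}V\otimes S^{m+1}V$ onto $S^2$ or $\bigwedge^2$. So it suffices to show $\Sigma(h_t)\neq 0$ on highest weight vectors $h_t$ with $\dim V=2$.

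That nonvanishing is the entire content of the proposition, and you leave it as ``I would check this'' while worrying about cancellation. Your fallback does not fill the gap:
\begin{itemize}
\item Surjectivity of $S^mV\otimes S^{m+2}V$ onto the complement of $\bS_{(m+1,m+1)}V$ is exactly the statement to be proved, so invoking it is circular.
\item The evaluation trick of Lemma~\ref{pluckerrel} shows only that the relations hold, not that they span.
\item Transporting the classical spanning result for quadratic Pl\"ucker relations through $\Omega$ could be made to work. However, you would first have to identify $\Omega$ of the classical relation map with $\Sigma$, and you do not do this.
\end{itemize}

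\textbf{How to close it.} Your own candidate monomial works, and no factorials appear. Use the normalization in which $q_N\leftrightarrow x^N$ is $\fgl(V)$-equivariant; the paper uses it, and then the relations are literally $\Sigma(x^{N_1}\otimes x^{N_2})$. Take
\[
h_t=\sum_{s=0}^{t}(-1)^s\binom{t}{s}\,x^{m-t+s}y^{t-s}\otimes x^{m+2-s}y^{s}.
\]
In $\sigma(h_t)$, the tensor $x^{m+1}\otimes x^{m+1-t}y^t$ arises only from $s=t$, with coefficient $(-1)^t(m+2-t)$. The tensor $x^{m+1-t}y^t\otimes x^{m+1}$ arises only from $s=0$ and $s=1$, with total coefficient $(m+2)-t$.

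Hence the coefficient of $q_{\{1^{m+1}\}}q_{\{1^{m+1-t},2^t\}}$ in $\Sigma(h_t)$ is $(m+2-t)\bigl((-1)^t\pm 1\bigr)$. The sign is $+$ in $S^2$ ($m$ odd) and $-$ in $\bigwedge^2$ ($m$ even). For $t\equiv m+1\pmod 2$ with $t\ge 1$ this equals $\pm 2(m+2-t)\neq 0$. For $t=0$ one gets $\Sigma(h_0)=(m+2)\,q_{\{1^{m+1}\}}^2\neq 0$. With this inserted your argument is complete, and it also shows directly why the wrong-parity constituents vanish.

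\textbf{Comparison with the paper.} The paper handles this step differently. It proves that $\sigma_{m+1,m+1}\colon S^{m+2}V\otimes S^mV\to S^{m+1}V\otimes S^{m+1}V$ is injective, by a triangularity argument on weight spaces with $\mu_1\ge m+2$ for $\dim V=2$. By Pieri, its image is then the complement of $\bS_{(m+1,m+1)}V$. Each constituent of $S^{m+1}V\otimes S^{m+1}V$ occurs once and lies wholly in $S^2$ or $\bigwedge^2$, so projecting gives exactly $\ker F$. That route avoids both the plethysm decomposition of $\ker F$ and the parity-dependent coefficient check. The cost is proving injectivity on whole weight spaces instead of evaluating on one vector per constituent.
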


To prove this, we consider a variation first. Pick $a, b$ and consider the map
\[
\sigma_{a,b} \colon  S^{a+1} V \otimes S^{b-1} V \to S^a V \otimes S^b V
\]
which is defined as the composition
\begin{align*}
  S^{a+1} V \otimes S^{b-1} V &\xrightarrow{\Delta \otimes 1} S^a V \otimes V \otimes S^{b-1} V \\
  &\xrightarrow{1 \otimes m} S^a V \otimes S^b V
\end{align*}

Here $m \colon V \otimes S^{b-1} V \to S^b V$ is the multiplication map on polynomials and $\Delta \colon S^{a+1} V \to S^a V \otimes V$ is the comultiplication map defined by
\[
  \Delta(x_{i_1} \cdots x_{i_{a+1}}) = \sum_{j=1}^{a+1} x_{i_1} \cdots \widehat{x_{i_j}} \cdots x_{i_{a+1}} \otimes x_{i_j}.
\]
These are $\GL(V)$-linear maps. The following is likely well-known but we give a proof for completeness.

\begin{proposition}
If $a \ge b$, then  $\sigma_{a,b}$ is injective.
\end{proposition}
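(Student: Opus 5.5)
Since $\sigma_{a,b}$ is $\GL(V)$-equivariant, the plan is to use Schur's lemma together with the Pieri rule. First I decompose the source and the target. By the Pieri rule,
\[
S^{a+1}V\otimes S^{b-1}V\cong\bigoplus_{k=0}^{b-1}\bS_{(a+b-k,\,k)}V,\qquad S^{a}V\otimes S^{b}V\cong\bigoplus_{k=0}^{\min(a,b)}\bS_{(a+b-k,\,k)}V.
\]
Because $a\ge b$, the second sum runs over $0\le k\le b$. Both decompositions are multiplicity free, and every summand of the source also occurs in the target. So $\sigma_{a,b}$ maps each isotypic component $\bS_{(a+b-k,k)}V$ of the source into the unique copy of it in the target. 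By Schur's lemma, injectivity reduces to showing that $\sigma_{a,b}$ does not kill a highest weight vector of weight $(a+b-k,k)$ for each $0\le k\le b-1$. (If $\dim V=1$ only $k=0$ survives, and the same computation covers it.)

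Next I write down explicit highest weight vectors, taking $x_1,x_2,\dots$ as a basis of $V$ with the upper triangular Borel. In $S^pV\otimes S^qV$ the vector $x_1^{p-k}\otimes x_1^{q-k}$ multiplied by $(x_1\otimes x_2-x_2\otimes x_1)^k$ is a highest weight vector of weight $(p+q-k,k)$. For the source ($p=a+1$, $q=b-1$) this gives
\[
v_k=\sum_{i=0}^{k}(-1)^i\binom{k}{i}\,x_1^{a+1-i}x_2^{i}\otimes x_1^{b-1-k+i}x_2^{k-i}.
\]
This vector is nonzero, is killed by the raising operators since the antisymmetric factor is, and has the stated weight.

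The main step is to verify that $\sigma_{a,b}(v_k)\neq 0$, and I do this by computing a single coefficient. I track the monomial $x_1^{a}\otimes x_1^{b-k}x_2^{k}$ in $S^aV\otimes S^bV$. Applying $\Delta\otimes 1$ and then $1\otimes m$, this monomial arises in only two ways:
\begin{itemize}
\item from the $i=0$ term, by removing one of the $a+1$ copies of $x_1$, which contributes $a+1$;
\item from the $i=1$ term, by removing the single $x_2$, which contributes $-k$.
\end{itemize}
The total coefficient is $a+1-k$. This is positive because $k\le b-1\le a-1$, so $\sigma_{a,b}(v_k)\ne0$. The only real obstacle is bookkeeping: I must make sure no other $i$ contributes to this monomial, which holds because removing one variable leaves at least $i-1\ge1$ copies of $x_2$ in the first factor when $i\ge2$. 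Having checked every isotypic component, Schur's lemma then gives the injectivity of $\sigma_{a,b}$.
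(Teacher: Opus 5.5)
Your proof is correct. It differs from the paper's in how you verify that the kernel of $\sigma_{a,b}$ contains no highest weight vector.

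The paper reduces to $V=\C^2$ and observes that every highest weight vector of the source lies in a weight space $(\mu_1,\mu_2)$ with $\mu_1\ge a+1$. It then shows $\sigma_{a,b}$ is injective on each such entire weight space. In the monomial basis $x^iy^{a+1-i}\otimes x^jy^{b-1-j}$, the image of the basis vector indexed by $i$ has lowest term $x^{i-1}y^{a+1-i}\otimes x^{j+1}y^{b-1-j}$ with coefficient $i\ge a-b+2>0$. So the matrix is triangular with nonzero diagonal.

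You instead use the full multiplicity-free Pieri decomposition and Schur's lemma to reduce to the explicit vectors $v_k=(x_1^{a+1-k}\otimes x_1^{b-1-k})(x_1\otimes x_2-x_2\otimes x_1)^k$. You then check a single coefficient of $\sigma_{a,b}(v_k)$, namely $a+1-k\ge a-b+2$, and your bookkeeping of which terms can contribute is right.

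The paper's argument needs less input: only the bound $\lambda_1\ge a+1$, with no multiplicity-freeness and no explicit highest weight vectors. It also proves the slightly stronger statement that $\sigma_{a,b}$ is injective on whole weight spaces. Your argument is more explicit. It shows directly that each $\bS_{(a+b-k,k)}V$, $0\le k\le b-1$, maps isomorphically onto its unique copy in $S^aV\otimes S^bV$. From this the cokernel $\bS_{(a,b)}V$, used afterwards for the odd Pl\"ucker relations, can be read off immediately.
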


\begin{proof}
  Since all Schur functors $\bS_\lambda$ in the domain satisfy $\lambda_3=0$ by Pieri's rule, it suffices to check this when $V = \C^2$. Let $\{x,y\}$ be a basis for $\C^2$ and work with the Borel subgroup of $\GL_2$ which is upper-triangular with respect to this basis. Then it also suffices to check that $\ker \sigma_{a,b}$ contains no highest weight vectors. Every $\bS_\lambda$ appearing satisfies $\lambda_1 \ge a+1$, so we only need to consider the weight spaces $(\mu_1,\mu_2)$ with $\mu_1 \ge a+1$. This has a basis consisting of the vectors $x^i y^{a+1-i} \otimes x^j y^{b-1-j}$ where $i+j = \mu_1$. In particular, $i \ge \mu_1 - j \ge a-b+2 > 0$. We have
  \[
    \sigma_{a,b}(x^i y^{a+1-i} \otimes x^j y^{b-1-j}) = i x^{i-1} y^{a+1-i} \otimes x^{j+1} y^{b-1-j} + (a+1-i) x^i y^{a-i} \otimes x^j y^{b-j}.
  \]
  Since the first term has nonzero coefficient, we see that $\sigma$ is triangular with respect to this basis with nonzero entries on the diagonal, and hence injective.
\end{proof}

\begin{proof}[Proof of Proposition~\ref{prop:odd-plucker}]
  For a multiset $N = \{n_1,\dots,n_{m+1}\}$, we identify $q_N$ with the monomial $x_{n_1} \cdots x_{n_{m+1}}$. The span of the odd Pl\"ucker relations are the image of $\sigma_{m+1,m+1}$ composed with the projection of $S^{m+1} V \otimes S^{m+1} V$ onto $\bigwedge^2 (S^{m+1} V)$ (when $m$ is even) or $S^2(S^{m+1} V)$ (when $m$ is odd).   Using the Pieri rule and the fact that $\sigma_{m+1,m+1}$ is injective, we see that its cokernel must be the Schur functor $\bS_{(m+1,m+1)}(V)$. Hence the cokernel of $F$ is also isomorphic to $\bS_{(m+1,m+1)}(V)$.
\end{proof}

We can deduce from this the linear dependencies amongst the odd Pl\"ucker relations: the relations come from the intersection of their span with $S^2(S^{m+1} V)$ (when $m$ is even) or $\bigwedge^2(S^{m+1} V)$ (when $m$ is odd).

\section{Picard group of supertori} \label{Section:Picard}
As noted in \cite[Section 2.15]{BergveltRabin1999} and \cite[Section 3]{Rabin1995}, the even Picard group $\operatorname{Pic}_{\text{\rm ev}}(M)=\rH^1(M, \cO_{\text{\rm ev}}^{*})$ is the group of isomorphism classes of line bundles on $M$ under tensor product.
In this section, we compute the degree-zero even Picard group of the supertorus $\operatorname{Pic}^0_{\text{\rm ev}}(M)$ explicitly in terms of cocycles and coboundaries, as an application of the results in Section~\ref{ss:group-coh}. 

We first fix some notations. Let $\Lambda$ be the Grassmann algebra defined in Section~\ref{ss:supertori} and $E$ be the exterior algebra defined in Section~\ref{ss:group-coh}. 
Let $E_{{\rm ev}}$ (resp. $\Lambda_{\rm ev}$) denote the even part of $E$ (resp. $\Lambda$). Then $N_{\rm ev}=\Gamma(M, \cO_{\text{\rm ev}})=\cO(\C^m)\otimes_{\C} E_{{\rm ev}}$. Note that, as $\Lambda_{\rm ev}$-modules, we have
\[
E_{\rm ev} \cong \bigoplus_{\substack{J\subseteq \{1,\ldots,n\}\\ |J|\; {\rm even}}}\Lambda_{\rm ev} \oplus \bigoplus_{\substack{J\subseteq \{1,\ldots,n\}\\ |J|\; {\rm odd}}}\Lambda_{\rm odd} 
\]
via the map
\[
F = \sum_{\substack{0 \le k \le n\\k\; \text{even}\\ i_1 < \cdots < i_k}} f_{i_1,\ldots,i_k}\theta_{i_1}\theta_{i_2}\ldots\theta_{i_k} +
\sum_{\substack{1 \le k \le n\\ k\; \text{odd} \\ i_1 < \cdots < i_k}}\varphi_{i_1,\ldots,i_k}\theta_{i_1}\theta_{i_2}\ldots\theta_{i_k} \longmapsto ((f_{i_1,\ldots, i_k})_{k\; {\rm even}}, (\varphi_{i_1,\ldots,i_k})_{k \; {\rm odd}}) ,
\]
We then let
\[
\Psi:= \bigoplus_{i=1}^m \left(\bigoplus_{\substack{J\subseteq \{1,\ldots,n\}\\ |J|\; {\rm even}}}\Lambda_{\rm ev} \oplus \bigoplus_{\substack{J\subseteq \{1,\ldots,n\}\\ |J|\; {\rm odd}}}\Lambda_{\rm odd}\right) 
\]
as an abelian group. We will use it in this section to represent $m$ elements of $E_{\rm ev}$.

\begin{proposition}\label{prop:picard}
There  are isomorphisms of groups
\[\operatorname{Pic}^0_{\rm ev}(M)
\cong
\frac{\rH^1(G, \cO_{\rm ev})}{\rH^1(G,\mathbb{Z})} \\[6pt]
\cong
\left( \frac{ \langle
(f^{(i)}_J,\,
\phi^{(i)}_J)\in \Psi
:\;
J\subseteq \{1,\ldots,n\},\;
1\le i\le m,\;
C
\rangle
}{
\langle R\rangle
}
\right)
\Big/
\mathcal L ,
\]
where $C$ and $R$ are explicit constraints (\ref{eq:constraints}) and relations (\ref{eq:rel1}, \ref{eq:rel2}, \ref{eq:rel3}, \ref{eq:rel4}), respectively, and $\cL$ is an explicit lattice (\ref{eq:lattice}). 
\end{proposition}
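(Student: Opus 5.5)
The plan is to run the exponential sequence on $M$ and then make everything explicit through the group-cohomology description of Section~\ref{ss:group-coh}. On $M$ there is an exact sequence of sheaves of abelian groups
\[
0 \to \mathbb{Z} \to \cO_{\rm ev} \xrightarrow{\exp(2\pi\sqrt{-1}\,\cdot)} \cO_{\rm ev}^* \to 0.
\]
It is exact because an even function is invertible iff its body is nonvanishing, and the logarithm of $1+(\text{nilpotent})$ is a finite sum. Its long exact sequence gives
\[
\rH^1(M,\mathbb{Z}) \to \rH^1(M,\cO_{\rm ev}) \to \operatorname{Pic}_{\rm ev}(M) \to \rH^2(M,\mathbb{Z}).
\]
I will define $\operatorname{Pic}^0_{\rm ev}(M)$ as the kernel of the first Chern class map to $\rH^2(M,\mathbb{Z})$. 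This yields $\operatorname{Pic}^0_{\rm ev}(M)\cong \rH^1(M,\cO_{\rm ev})/\operatorname{im}\rH^1(M,\mathbb{Z})$, where injectivity of $\rH^1(M,\mathbb{Z})\to \rH^1(M,\cO_{\rm ev})$ follows on passing to bodies. Next I apply Mumford's isomorphism exactly as in the lemma $\rH^i(M,\cO_M)\cong\rH^i(G,N)$, using that $\C^m$ is contractible and that $\cO_{\rm ev}$ on $\C^{m|n}$ is acyclic. This identifies $\rH^1(M,\cO_{\rm ev})\cong\rH^1(G,N_{\rm ev})$ and $\rH^1(M,\mathbb{Z})\cong\rH^1(G,\mathbb{Z})=\hom(G,\mathbb{Z})$, compatibly. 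That is the first isomorphism.

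For the explicit form, I repeat the argument of Proposition~\ref{prop:HM=HTE} with $N$ replaced by $N_{\rm ev}=\cO(\C^m)\otimes E_{\rm ev}$. Lemmas~\ref{lem:SNpos-vanish} and \ref{indepofz} go through verbatim, since $E_{\rm ev}$ is a $\cT$-stable summand of $E$ and all operators preserve parity. This gives $\rH^1(G,N_{\rm ev})\cong\rH^1(\cT,E_{\rm ev})$.

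Since $\cT\cong\mathbb{Z}^m$, a $1$-cocycle is an $m$-tuple $(F^{(1)},\dots,F^{(m)})\in E_{\rm ev}^m$ with $(T_i-1)F^{(j)}=(T_j-1)F^{(i)}$. Expanding each $F^{(i)}$ in $\theta$-monomials with coefficients $f^{(i)}_J\in\Lambda_{\rm ev}$ or $\phi^{(i)}_J\in\Lambda_{\rm odd}$ puts it in $\Psi$. I then compute $T_i(\theta_J)=\prod_{k\in J}(\theta_k+\alpha_{ik})$, which gives an explicit formula for the $\theta_K$-coefficient of $(T_i-1)F^{(j)}$ as a signed sum over $J\supsetneq K$ of $\alpha_{i,J\setminus K}$ times the coefficients. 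Equating coefficients gives the constraints $C$. Coboundaries $(T_i-1)H$ for $H\in E_{\rm ev}$, expanded the same way, give the relations $R$; I would organize these by $|J|$ and parity to get the four families (\ref{eq:rel1})--(\ref{eq:rel4}).

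Finally, I track the image of $\hom(G,\mathbb{Z})$ through inflation and the projection to $\mathcal P_0\otimes E$. A homomorphism sending $S_i\mapsto a_i$ and $T_i\mapsto b_i$ becomes the cocycle $(z\mapsto$ constant$)$ on $G$. Its class in $\rH^1(\cT,E_{\rm ev})$ is represented by constants of the form $b_i-\sum_k a_k\tau_{ik}$ placed in $f^{(i)}_\emptyset$. To see this, I kill the $\cS$-part by subtracting the coboundary of the linear function $\sum a_k z_k$, whose $T_i$-shift is $\sum_k a_k\tau_{ik}$. This gives the lattice $\cL$.

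The main obstacle will be bookkeeping rather than conceptual. The identification $\rH^1(G,N_{\rm ev})\cong\rH^1(\cT,E_{\rm ev})$ is a composite of inflation with the splitting off of the $\mathcal P_{i\ge1}$ summands, and the integral classes are not in $\cO(\C^m)^{\cS}$. I therefore have to check carefully, at the cocycle level, that the correction by $\sum a_kz_k$ is legitimate and lands in $\mathcal P_0\otimes E$. Getting the signs in the coefficient formulas for $C$ and $R$ right is the other tedious point.
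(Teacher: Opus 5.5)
Your proposal is correct and follows essentially the same route as the paper. Both use the exponential sequence for $\cO_{\rm ev}$, the reduction $\rH^1(M,\cO_{\rm ev})\cong\rH^1(G,N_{\rm ev})\cong\rH^1(\cT,E_{\rm ev})$ obtained by rerunning Section~\ref{ss:group-coh} with $E_{\rm ev}$, and coefficient comparison in the $\theta_J$ to get $C$ and $R$. Both also obtain the lattice $\cL$ by subtracting the coboundary of a linear function $\sum_k a_k z_k$ to kill the $\cS$-component of an integral cocycle; your convention $S_i\mapsto a_i$ matches the paper's $S_i\mapsto -n_i$ under $a_i=-n_i$.
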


\begin{proof}
By the same arguments as in Section \ref{ss:group-coh} (replacing $E$ by $E_{\rm ev}$ and $N$ by $N_{\rm ev}$), we get
\[
  \rH^1(M, \cO_{\text{\rm ev}})\cong \rH^1(G, N_{\rm ev}) \cong \rH^1(\mathcal{T}, \rH^0(\mathcal{S}, N_{\rm ev}))\cong \rH^1(\mathcal{T}, E_{\rm ev}). 
\]
By definition of group cohomology, we have
\[
  \rH^1(\mathcal{T},E_{\rm ev}) = \frac{\{f \colon \cT\to E_{\rm ev}\; :\; f(gt) = f(g) + gf(t) \ \text{for any} \ g,t \in \mathcal{T}\}}{\{f \colon \cT \to E_{\rm ev}:\text{there exists $e\in E_{\rm ev}$ such that $f(g) = e-ge$ for any $g \in \mathcal{T}$}\}}.  
\]
Since $\mathcal{T}=\langle T_1,\ldots,T_m\rangle$ is abelian, the numerator can be identified with
\[
    Z=\{(F_1,\ldots, F_m): F_i=f(T_i)\in E_{\rm ev}\; \text{such that}\; F_i + T_iF_j = F_j + T_jF_i\; \;\text{for all} \; i\neq j\}.
\]
We now write each $F_i\in E_{\rm ev}$ as
\[
F_i = \sum_{\substack{0 \le k \le n\\k\; \text{even}\\ i_1 < \cdots < i_k}} f^{(i)}_{i_1,\ldots,i_k}\theta_{i_1}\theta_{i_2}\ldots\theta_{i_k} +
\sum_{\substack{1 \le k \le n\\ k\; \text{odd} \\ i_1 < \cdots < i_k}}\phi^{(i)}_{i_1,\ldots,i_k}\theta_{i_1}\theta_{i_2}\ldots\theta_{i_k},
\]
where the coefficients $f^{(i)}_{I}$ are even elements of $\Lambda$ and the coefficients $\phi^{(i)}_{I}$ are odd elements of $\Lambda$.
The cocycle constraint
$(T_i-1)F_j=(T_j-1)F_i$ for $i\neq j$ yields

\begin{align*}
&\sum_{\substack{i_1<\cdots<i_k \\ k\ \text{even}}}
f^{(j)}_{i_1,\ldots,i_k}\,
\Delta^{(i)}_{i_1,\ldots,i_k}
+
\sum_{\substack{i_1<\cdots<i_k \\ k\ \text{odd}}}
\phi^{(j)}_{i_1,\ldots,i_k}\,
\Delta^{(i)}_{i_1,\ldots,i_k}
\\
&=
\sum_{\substack{i_1<\cdots<i_k \\ k\ \text{even}}}
f^{(i)}_{i_1,\ldots,i_k}\,
\Delta^{(j)}_{i_1,\ldots,i_k}
+
\sum_{\substack{i_1<\cdots<i_k \\ k\ \text{odd}}}
\phi^{(i)}_{i_1,\ldots,i_k}\,
\Delta^{(j)}_{i_1,\ldots,i_k},
\end{align*}
where 
\[
\Delta^{(i)}_{i_1,\ldots,i_k}
:=
(\theta_{i_1}+\alpha_{ii_1})
\cdots
(\theta_{i_k}+\alpha_{ii_k})
-
\theta_{i_1}\cdots\theta_{i_k}.
\]

We now compare coefficients of $\theta$ on both sides. Let $J\subsetneq \{1,\ldots,n\}$ be a subset and $\theta_J$ denote the corresponding theta product in order of the indices. For any $I=\{i_1,\ldots,i_k\}\subseteq\{1,\ldots,n\}$ with $I\supsetneq J$, we define \[ (-1)^{\varepsilon(J,I)}:=  (-1)^{j_1-1}\cdots(-1)^{j_\ell-1}, \] where $I\setminus J  = \{j_1,\dots,j_{\ell}\}$.
Then, comparing coefficients of $\theta_J$, we get 
\begin{equation}\label{eq:constraints}
\begin{aligned}
\sum_{\substack{I=\{i_1,\ldots,i_k\}\supsetneq J\\k \; \text{even}}}f^{(j)}_{i_1,\ldots, i_k}(-1)^{\varepsilon(J,I)}\prod_{i_l\notin J}\alpha_{ii_l}+\sum_{\substack{I=\{i_1,\ldots,i_k\}\supsetneq J\\k \; \text{odd}}}\phi^{(j)}_{i_1,\ldots, i_k}(-1)^{\varepsilon(J,I)}\prod_{i_l\notin J}\alpha_{ii_l}&\\=
\sum_{\substack{I=\{i_1,\ldots,i_k\}\supsetneq J\\k \; \text{even}}}f^{(i)}_{i_1,\ldots, i_k}(-1)^{\varepsilon(J,I)}\prod_{i_l\notin J}\alpha_{ji_l}+\sum_{\substack{I=\{i_1,\ldots,i_k\}\supsetneq J\\k \; \text{odd}}}\phi^{(i)}_{i_1,\ldots, i_k}(-1)^{\varepsilon(J,I)}\prod_{i_l\notin J}\alpha_{ji_l}.
\end{aligned}
\end{equation}
We denote the above collection of constraints by $C$.

A cocycle $(F_1,\ldots, F_m)$ is a coboundary if there exists $ \tilde{F}\in E_{\rm ev}$ such that $F_i = \tilde{F}-T_i\tilde{F}$. Expanding this out, we get

\begin{align*}
&\sum_{\substack{i_1<\cdots<i_k \\ k\ \mathrm{even}}}
f^{(i)}_{i_1,\ldots,i_k}\,\theta_{i_1}\cdots\theta_{i_k}
+
\sum_{\substack{i_1<\cdots<i_k \\ k\ \mathrm{odd}}}
\phi^{(i)}_{i_1,\ldots,i_k}\,\theta_{i_1}\cdots\theta_{i_k}
\\
&=
-\sum_{\substack{i_1<\cdots<i_k \\ k\ \mathrm{even}}}
\tilde{f}_{i_1,\ldots,i_k}\,\Delta^{(i)}_{i_1,\ldots,i_k}
-\sum_{\substack{i_1<\cdots<i_k \\ k\ \mathrm{odd}}}
\tilde{\phi}_{i_1,\ldots,i_k}\,\Delta^{(i)}_{i_1,\ldots,i_k} \; \;\text{for all}\; 1\leq i\leq m.
\end{align*}

As before, for $J\subsetneq \{1,\ldots,n\}$, by comparing coefficients of $\theta_J$, we obtain the following relations. 

If $J$ has an even number of elements, we have
\begin{subequations}
\begin{equation}\label{eq:rel1}
\begin{aligned}
     f^{(i)}_J = \sum_{\substack{\{i_1,\ldots,i_k\}\supsetneq J\\ k\; \text{even}}}\tilde{f}_{i_1,\ldots,i_k}(-1)^{\varepsilon(J,I)+1}\prod_{i_l\notin J}\alpha_{ii_l} + \sum_{\substack{\{i_1,\ldots,i_k\}\supsetneq J\\ k\; \text{odd}}}\tilde{\phi}_{i_1,\ldots,i_k}(-1)^{\varepsilon(J,I)+1}\prod_{i_l\notin J}\alpha_{ii_l}
\end{aligned}
\end{equation}
for all $1\leq i\leq m$.

If $J$ has an odd number of elements, we have
\begin{equation}\label{eq:rel2}
   \phi^{(i)}_J = \sum_{\substack{\{i_1,\ldots,i_k\}\supsetneq J\\ k\; \text{even}}}\tilde{f}_{i_1,\ldots,i_k}(-1)^{\varepsilon(J,I)+1}\prod_{i_l\notin J}\alpha_{ii_l} + \sum_{\substack{\{i_1,\ldots,i_k\}\supsetneq J\\ k\; \text{odd}}}\tilde{\phi}_{i_1,\ldots,i_k}(-1)^{\varepsilon(J,I)+1}\prod_{i_l\notin J}\alpha_{ii_l}    
\end{equation}
for all $1\le i\le m$.
Comparing the coefficient of $\theta_1\theta_2\cdots\theta_n$ yields
\begin{equation}\label{eq:rel3}
f^{(i)}_{\{1,\ldots,n\}}=0 \; \;\text{for all}\; 1\leq i\leq m   \quad \text{if}\; n\; \text{is even};
\end{equation}
\begin{equation}\label{eq:rel4}
\phi^{(i)}_{\{1,\ldots,n\}}=0 \; \;\text{for all}\; 1\leq i\leq m \quad \text{if}\;n\; \text{is odd}.  
\end{equation}
\end{subequations}
We denote the above collection of coboundary relations by $R$.

Thus, the group of cocycles is a subgroup of $\Psi$ subject to constraints $C$. The group of coboundaries is generated by relations $R$. In other words, we have
\[
  \rH^1(G, \cO_{\rm ev})\cong \frac{\langle (f^{(i)}_J, \phi^{(i)}_J)\in \Psi:  C\rangle}{\langle R\rangle }.  
\]

Next,  from the exponential exact sequence $0\to \mathbb{Z}\to \cO_{\rm ev}\to \cO_{\rm ev}^{*}\to 1$ and the resulting long exact cohomology sequence
\[
  \rH^1(M,\mathbb{Z})\to \rH^1(M,\cO_{\rm ev})\to \rH^1(M,\cO_{\rm ev}^{*})\to \rH^2(M,\mathbb{Z}),
\]
we have \[ \operatorname{Pic}^{0}_{\mathrm{\rm ev}}(M) = \rH^1(M,\cO_{\mathrm{\rm ev}})/\rH^1(M,\mathbb Z), \] so that $\rH^1(M,\mathbb{Z})\cong \rH^1(G,\mathbb{Z}) \cong \mathbb{Z}^{2m}$ can be regarded as a lattice in $\rH^1(M,\cO_{\rm ev})$. 

To understand how this lattice sits inside $\rH^1(G,\cO_{\rm ev})$, we consider an arbitrary element of $\rH^1(G,\mathbb{Z})$ as the cocycle which sends $T_{i}\mapsto m_{i}$ and $S_{i}\mapsto -n_{i}$ where $m_i,n_i\in \mathbb{Z}$ for all $1\leq i\leq m$. Next, we will choose a distinguished representative in each cohomology class so that $S_i\mapsto 0$. In fact, every function $\tilde{F}(z_1,\ldots,z_m,\theta_1,\ldots,\theta_n)= n_{1}z_{1}+n_{2}z_{2}+\dots+n_{m}z_{m} \in \cO_{\rm ev}$ induces a coboundary given by
\begin{align*}
  S_{i} &\mapsto \tilde{F}-S_{i}\tilde{F} = -n_{i},\\
  T_{i} &\mapsto \tilde{F}-T_{i}\tilde{F} = -n_{1}\tau_{i1}-\dots-n_{m}\tau_{im}.  
\end{align*}
As in \cite{Rabin1995}, by subtracting this coboundary from our original cocycle, we get a new representative:
\[
  S_{i}\mapsto 0, \qquad   T_{i}\mapsto m_{i}+n_{1}\tau_{i1}+\dots+n_{m}\tau_{im}.  
\]
Note that all these indeed give us actual 1-cocycles as these are valued in constant functions, hence the constraints are automatically satisfied.

As a result, $\operatorname{Pic}^0_{\rm ev}(M)$ is then given by
\[
\begin{aligned}
\operatorname{Pic}^0_{\rm ev}(M)
&\cong
\frac{\rH^1(G, \cO_{\rm ev})}{\rH^1(G,\mathbb{Z})} \\[6pt]
&\cong
\left(
\frac{
\langle
(f^{(i)}_J,\,
\phi^{(i)}_J) \in \Psi
:\;
J\subseteq \{1,\ldots,n\},\;
1\le i\le m,\;
C
\rangle
}{
\langle R\rangle
}
\right)
\Big/
\mathcal L ,
\end{aligned}
\]
where 
\begin{equation} \label{eq:lattice}
\begin{aligned}
\mathcal{L}
=
\left\{
\begin{aligned}
f^{(i)}_{\emptyset} &= m_i+n_1\tau_{i1}+\ldots+n_m\tau_{im}, \quad m_i,n_i\in \mathbb{Z}, \\ 
f^{(i)}_J &=0, \quad \text{for all}\; J\neq \emptyset, \\
\phi^{(i)}_I &=0, \quad \text{for all}\; I,
\end{aligned}
\right.
\end{aligned}
\end{equation}
which completes the proof.
\end{proof}

\begin{remark}
    Note that $\rH^1(G,\cO_{\rm ev})$ is not a free abelian group due to the relations $R$; this was noted in \cite{BergveltRabin1999} and is why $\rm Pic^0$ is not itself a supermanifold. 
\end{remark}

\begin{remark}
We note that the reduction map $\cO_{\rm ev}^*\rightarrow \cO_{M_{\bos}}^*$ has a section $\cO_{M_{\bos}}^*\hookrightarrow \cO_{\rm ev}^*$ (as $M$ is a projected supermanifold). Thus, the map 
\[
\operatorname{Pic}_{\rm ev}(M)=\rH^1(M, \cO_{\rm ev}^*)\rightarrow \rH^1(M, \cO_{M_{\bos}}^*)=\operatorname{Pic}(M_{\bos})
\] is a split surjection. Since the section $\cO_{M_{\bos}}^*\hookrightarrow \cO_{\rm ev}^*$ is compatible with the exponential exact sequence, we also get a split surjection
\[
\operatorname{Pic}^0_{\rm ev}(M)\rightarrow \operatorname{Pic}^0(M_{\bos}).
\]
Using the description as in Proposition~\ref{prop:picard}, we then get
\[
\begin{aligned}
\operatorname{Pic}^0(M_{\mathrm{\bos}})
&\cong
\frac{
\langle
f^{(i)}_{\emptyset} \in \C
:\;
1\le i\le m
\rangle
}{\langle f^{(i)}_{\emptyset}
=
m_i+n_1\tau_{i1}+\ldots+n_m\tau_{im} \quad
m_i,n_i\in\mathbb Z
\rangle } 
\cong
\frac{\C^m}{G},
\end{aligned}
\]
which recovers the classical case.
\end{remark}

\appendix

\section{Poincar\'e duality tables for $m=2, n=2$} \label{appendixB}
In this appendix, we compute an explicit basis for $\rH^*(M, \cO_M)\cong \rH^*(\cT, E)$ for the supertorus $M$ of dimension $2|2$ and make tables to illustrate Poincar\'e duality in this case.

We fix an ordered basis $\{e_1,e_2\}$ of $\C^2$.
The complex computing $\rH^*(\mathcal{T},E)$ is
\[
\Lambda^0(\C^2)\otimes E\xrightarrow[]{d_0}\Lambda^1(\C^2)\otimes E\xrightarrow[]{d_1}\Lambda^2(\C^2)\otimes E
\] 
where the differentials are given by:
\begin{align*}
d_0(1\otimes x) &=e_1\otimes (T_1-1)x+e_2\otimes (T_2-1)x,\\
d_1(e_1\otimes x + e_2\otimes y) &= (e_1\wedge e_2)\otimes (T_2-1)x - (e_1\wedge e_2)\otimes (T_1-1)y.
\end{align*}
It is isomorphic to the complex
\[
E\xrightarrow[]{\binom{T_1-1}{T_2-1}}E\oplus E\xrightarrow[]{(T_2-1, -(T_1-1))}E.
\]
For any $0\leq i\leq 2$, the Poincar\'e pairing
\[
  \rH^i(\cT, E)\times \rH^{2-i}(\cT, E)\xrightarrow[]{\cup}\rH^2(\cT, E)\xrightarrow[]{\partial_{\theta}}\C
\]
is then induced by \[(\Lambda^i(\C^2)\otimes E) \times (\Lambda^{2-i}(\C^2)\otimes E)\xrightarrow[]{\text{wedge}, \; \text{mult}_{E}}\Lambda^2(\C^2)\otimes E\xrightarrow[]{\partial_{\theta}}\C\] 
where we recall that the map $\partial_{\theta}\colon  \Lambda[\theta_1,\ldots, \theta_n]\rightarrow \C$ extracts the coefficient of the maximal product $\prod_{i,j}{\alpha_{ij}}\prod_{i}\theta_i$ (product taken in lexicographical order) and we identify $\Lambda^2(\C^2)\cong \C$ by sending $e_1\wedge e_2$ to $1$. 

In the next couple of pages, we provide explicit tables exhibiting a basis of $\rH^i$ together with the corresponding dual basis of $\rH^{2-i}$. We will view elements of $\rH^0$ and $\rH^2$ as elements of $E$ and elements of $\rH^1$ as elements of $\Lambda^1(\C^2)\otimes E$. In particular, we note that $\dim \rH^0(\cT,E)=\dim \rH^2(\cT,E)=29$ whereas $\dim \rH^1(\cT,E)=58$.

\begingroup
\small
\setlength{\tabcolsep}{4pt}
\renewcommand{\arraystretch}{1.15}
\refstepcounter{table}
\edef\thistablenumber{\thetable}

\begin{longtable}{|c|p{0.40\linewidth}|p{0.56\linewidth}|}
\hline
\# & $\rH^0$ basis & $\rH^2$ basis \\
\hline
\endfirsthead

\hline
\# & $\rH^0$ basis & $\rH^2$ basis \\
\hline
\endhead

\hline
\endfoot

\multicolumn{3}{c}{}\\[0.6em]
\multicolumn{3}{c}{\parbox{0.9\linewidth}{\centering
\normalsize\textsc{Table \thistablenumber.}\quad
$\rH^0 \longleftrightarrow \rH^2$ Poincar\'e duality pairing for $m=2,n=2$.
}}\\
\endlastfoot

1 & $1$ &
$ \alpha_{11} \alpha_{12} \alpha_{21} \alpha_{22} \theta_{1} \theta_{2}$ \\
\hline

2 & $\alpha_{11}$ &
$ \alpha_{12} \alpha_{21} \alpha_{22} \theta_{1} \theta_{2}$ \\
\hline

3 & $\alpha_{12}$ &
$- \alpha_{11} \alpha_{21} \alpha_{22} \theta_{1} \theta_{2}$ \\
\hline

4 & $\alpha_{21}$ &
$ \alpha_{11} \alpha_{12} \alpha_{22} \theta_{1} \theta_{2}$ \\
\hline

5 & $\alpha_{22}$ &
$- \alpha_{11} \alpha_{12} \alpha_{21} \theta_{1} \theta_{2}$ \\
\hline

6 & $\alpha_{11} \alpha_{12}$ &
$ \alpha_{21} \alpha_{22} \theta_{1} \theta_{2}$ \\
\hline

7 & $\alpha_{11} \alpha_{21}$ &
$- \alpha_{12} \alpha_{22} \theta_{1} \theta_{2}$ \\
\hline

8 & $\alpha_{11} \alpha_{22}$ &
$ \alpha_{12} \alpha_{21} \theta_{1} \theta_{2}$ \\
\hline

9 & $\alpha_{12} \alpha_{21}$ &
$ \alpha_{11} \alpha_{22} \theta_{1} \theta_{2}$ \\
\hline

10 & $\alpha_{12} \alpha_{22}$ &
$- \alpha_{11} \alpha_{21} \theta_{1} \theta_{2}$ \\
\hline

11 & $\alpha_{21} \alpha_{22}$ &
$ \alpha_{11} \alpha_{12} \theta_{1} \theta_{2}$ \\
\hline

12 & $\alpha_{11} \alpha_{12} \alpha_{21}$ &
$ \alpha_{22} \theta_{1} \theta_{2}$ \\
\hline

13 & $\alpha_{11} \alpha_{12} \alpha_{22}$ &
$- \alpha_{21} \theta_{1} \theta_{2}$ \\
\hline

14 & $\alpha_{11} \alpha_{21} \alpha_{22}$ &
$ \alpha_{12} \theta_{1} \theta_{2}$ \\
\hline

15 & $\alpha_{11} \alpha_{21} \theta_{1}$ &
$- \alpha_{12} \alpha_{22} \theta_{2}$ \\
\hline

16 &
$\alpha_{11} \alpha_{21} \theta_{2}
+\alpha_{11} \alpha_{22} \theta_{1}
+\alpha_{12} \alpha_{21} \theta_{1}$ &
$ \alpha_{11} \alpha_{22} \theta_{2}$ \\
\hline

17 &
$\alpha_{11} \alpha_{22} \theta_{2}
+\alpha_{12} \alpha_{21} \theta_{2}
+\alpha_{12} \alpha_{22} \theta_{1}$ &
$- \alpha_{11} \alpha_{21} \theta_{2}$ \\
\hline

18 & $\alpha_{12} \alpha_{21} \alpha_{22}$ &
$- \alpha_{11} \theta_{1} \theta_{2}$ \\
\hline

19 & $\alpha_{12} \alpha_{22} \theta_{2}$ &
$ \alpha_{11} \alpha_{21} \theta_{1}$ \\
\hline

20 & $\alpha_{11} \alpha_{12} \alpha_{21} \alpha_{22}$ &
$ \theta_{1} \theta_{2}$ \\
\hline

21 & $\alpha_{11} \alpha_{12} \alpha_{21} \theta_{1}$ &
$- \alpha_{22} \theta_{2}$ \\
\hline

22 &
$\alpha_{11} \alpha_{12} \alpha_{21} \theta_{2}
+\alpha_{11} \alpha_{12} \alpha_{22} \theta_{1}$ &
$ \alpha_{21} \theta_{2}$ \\
\hline

23 & $\alpha_{11} \alpha_{12} \alpha_{22} \theta_{2}$ &
$- \alpha_{21} \theta_{1}$ \\
\hline

24 & $\alpha_{11} \alpha_{21} \alpha_{22} \theta_{1}$ &
$- \alpha_{12} \theta_{2}$ \\
\hline

25 &
$\alpha_{11} \alpha_{21} \alpha_{22} \theta_{2}
+\alpha_{12} \alpha_{21} \alpha_{22} \theta_{1}$ &
$ \alpha_{11} \theta_{2}$ \\
\hline

26 & $\alpha_{12} \alpha_{21} \alpha_{22} \theta_{2}$ &
$- \alpha_{11} \theta_{1}$ \\
\hline

27 & $\alpha_{11} \alpha_{12} \alpha_{21} \alpha_{22} \theta_{1}$ &
$ \theta_{2}$ \\
\hline

28 & $\alpha_{11} \alpha_{12} \alpha_{21} \alpha_{22} \theta_{2}$ &
$- \theta_{1}$ \\
\hline

29 & $\alpha_{11} \alpha_{12} \alpha_{21} \alpha_{22} \theta_{1} \theta_{2}$ &
$ 1$ \\
\hline
\end{longtable}
\endgroup

\newpage
\begingroup
\small
\setlength{\tabcolsep}{4pt}
\renewcommand{\arraystretch}{1.15}
\newcommand{\mathcell}[1]{\(\begin{array}{@{}l@{}}#1\end{array}\)}
\setcounter{table}{1}
\refstepcounter{table}
\edef\thistablenumber{\thetable}

\begin{longtable}{|c|p{0.40\linewidth}|p{0.56\linewidth}|}
\hline
\# & $\rH^1$ basis & $\rH^1$ basis \\
\hline
\endfirsthead

\hline
\# & $\rH^1$ basis & $\rH^1$ basis \\
\hline
\endhead

\hline
\endfoot

\multicolumn{3}{c}{}\\[0.6em]
\multicolumn{3}{c}{\parbox{0.9\linewidth}{\centering
\normalsize\textsc{Table \thistablenumber.}\quad
$\rH^1 \longleftrightarrow \rH^1$ Poincar\'e duality pairing for $m=2,n=2$.
}}\\
\endlastfoot

1 &
\mathcell{e_{1}\otimes 1} &
\mathcell{e_{2}\otimes \alpha_{11} \alpha_{12} \alpha_{21} \alpha_{22} \theta_{1} \theta_{2}} \\
\hline

2 &
\mathcell{e_{1}\otimes \alpha_{11}} &
\mathcell{e_{1}\otimes \alpha_{11} \alpha_{12} \alpha_{22} \theta_{1} \theta_{2}
+ e_{2}\otimes \alpha_{12} \alpha_{21} \alpha_{22} \theta_{1} \theta_{2}} \\
\hline

3 &
\mathcell{e_{1}\otimes \alpha_{12}} &
\mathcell{-e_{1}\otimes \alpha_{11} \alpha_{12} \alpha_{21} \theta_{1} \theta_{2}
-e_{2}\otimes \alpha_{11} \alpha_{21} \alpha_{22} \theta_{1} \theta_{2}} \\
\hline

4 &
\mathcell{e_{1}\otimes \alpha_{21}} &
\mathcell{e_{2}\otimes \alpha_{11} \alpha_{12} \alpha_{22} \theta_{1} \theta_{2}} \\
\hline

5 &
\mathcell{e_{1}\otimes \alpha_{22}} &
\mathcell{-e_{2}\otimes \alpha_{11} \alpha_{12} \alpha_{21} \theta_{1} \theta_{2}} \\
\hline
6 &
\mathcell{e_{1}\otimes \alpha_{11} \alpha_{12}} &
\mathcell{
-e_{1}\otimes \alpha_{11} \alpha_{12} \alpha_{22} \theta_{1}
-e_{1}\otimes \alpha_{11} \alpha_{22} \theta_{1} \theta_{2} \\
+ e_{1}\otimes \alpha_{12} \alpha_{21} \theta_{1} \theta_{2}
+ e_{2}\otimes \alpha_{21} \alpha_{22} \theta_{1} \theta_{2}
} \\
\hline

7 &
\mathcell{e_{1}\otimes \alpha_{11} \alpha_{22}} &
\mathcell{
e_{1}\otimes \alpha_{11} \alpha_{12} \alpha_{22} \theta_{1}
+ e_{1}\otimes \alpha_{11} \alpha_{12} \theta_{1} \theta_{2} \\
-e_{2}\otimes \alpha_{11} \alpha_{22} \theta_{1} \theta_{2}
+ e_{2}\otimes \alpha_{12} \alpha_{21} \theta_{1} \theta_{2}
} \\
\hline
8 &
\mathcell{e_{1}\otimes \alpha_{11} \theta_{1} -e_{2}\otimes \alpha_{21} \theta_{1}} &
\mathcell{-e_{1}\otimes \alpha_{11} \alpha_{12} \alpha_{22} \theta_{2}} \\
\hline

9 &
\mathcell{e_{1}\otimes \alpha_{11} \theta_{2} -e_{2}\otimes \alpha_{22} \theta_{1}} &
\mathcell{e_{1}\otimes \alpha_{11} \alpha_{12} \alpha_{22} \theta_{1} 
+ e_{2}\otimes \alpha_{12} \alpha_{21} \alpha_{22} \theta_{1}} \\
\hline

10 &
\mathcell{e_{1}\otimes \alpha_{12} \theta_{2} -e_{2}\otimes \alpha_{22} \theta_{2}} &
\mathcell{-e_{1}\otimes \alpha_{11} \alpha_{12} \alpha_{21} \theta_{1}} \\
\hline

11 &
\mathcell{e_{1}\otimes \alpha_{21} \alpha_{22}} &
\mathcell{e_{2}\otimes \alpha_{11} \alpha_{12} \theta_{1} \theta_{2}} \\
\hline

12 &
\mathcell{e_{1}\otimes \alpha_{21} \theta_{1}} &
\mathcell{-e_{2}\otimes \alpha_{11} \alpha_{12} \alpha_{22} \theta_{2}} \\
\hline

13 &
\mathcell{e_{1}\otimes \alpha_{21} \theta_{2} + e_{1}\otimes \alpha_{22} \theta_{1}} &
\mathcell{e_{2}\otimes \alpha_{11} \alpha_{12} \alpha_{21} \theta_{2}} \\
\hline

14 &
\mathcell{e_{1}\otimes \alpha_{22} \theta_{2}} &
\mathcell{-e_{2}\otimes \alpha_{11} \alpha_{12} \alpha_{21} \theta_{1}} \\
\hline

15 &
\mathcell{e_{1}\otimes \alpha_{11} \alpha_{12} \theta_{1}
+ e_{2}\otimes \alpha_{12} \alpha_{21} \theta_{1}} &
\mathcell{e_{1}\otimes \alpha_{12} \alpha_{21} \theta_{2} 
+ e_{2}\otimes \alpha_{21} \alpha_{22} \theta_{2}} \\
\hline

16 &
\mathcell{e_{1}\otimes \alpha_{11} \alpha_{12} \theta_{2}
+ e_{2}\otimes \alpha_{12} \alpha_{22} \theta_{1}} &
\mathcell{e_{1}\otimes \alpha_{11} \alpha_{22} \theta_{1} 
-e_{2}\otimes \alpha_{21} \alpha_{22} \theta_{1}} \\
\hline

17 &
\mathcell{e_{1}\otimes \alpha_{11} \alpha_{21} \theta_{1}} &
\mathcell{-e_{2}\otimes \alpha_{12} \alpha_{22} \theta_{2}} \\
\hline

18 &
\mathcell{e_{1}\otimes \alpha_{11} \alpha_{21} \theta_{2}
+ e_{2}\otimes \alpha_{21} \alpha_{22} \theta_{1}} &
\mathcell{e_{2}\otimes \alpha_{11} \alpha_{22} \theta_{2} 
+ e_{2}\otimes \alpha_{12} \alpha_{22} \theta_{1}} \\
\hline

19 &
\mathcell{e_{1}\otimes \alpha_{11} \alpha_{22} \theta_{1}
-e_{2}\otimes \alpha_{21} \alpha_{22} \theta_{1}} &
\mathcell{e_{1}\otimes \alpha_{11} \alpha_{12} \theta_{2} 
+ e_{2}\otimes \alpha_{12} \alpha_{22} \theta_{1}} \\
\hline

20 &
\mathcell{e_{1}\otimes \alpha_{11} \alpha_{22} \theta_{2}} &
\mathcell{-e_{2}\otimes \alpha_{11} \alpha_{21} \theta_{2}
-e_{2}\otimes \alpha_{12} \alpha_{21} \theta_{1}} \\
\hline

21 &
\mathcell{e_{1}\otimes \alpha_{12} \alpha_{21} \theta_{2}
+ e_{2}\otimes \alpha_{21} \alpha_{22} \theta_{2}} &
\mathcell{e_{1}\otimes \alpha_{11} \alpha_{12} \theta_{1}
-e_{2}\otimes \alpha_{11} \alpha_{21} \theta_{2}} \\
\hline

22 &
\mathcell{e_{1}\otimes \alpha_{12} \alpha_{22} \theta_{2}} &
\mathcell{e_{2}\otimes \alpha_{11} \alpha_{21} \theta_{1}} \\
\hline

23 &
\mathcell{e_{1}\otimes \alpha_{21} \alpha_{22} \theta_{1}} &
\mathcell{e_{2}\otimes \alpha_{11} \alpha_{12} \theta_{2}} \\
\hline

24 &
\mathcell{e_{1}\otimes \alpha_{21} \alpha_{22} \theta_{2}} &
\mathcell{-e_{2}\otimes \alpha_{11} \alpha_{12} \theta_{1}} \\
\hline

25 &
\mathcell{e_{1}\otimes \alpha_{11} \alpha_{12} \alpha_{21} \theta_{1}} &
\mathcell{e_{1}\otimes \alpha_{12} \theta_{2}
-e_{2}\otimes \alpha_{22} \theta_{2}} \\
\hline

26 &
\mathcell{e_{1}\otimes \alpha_{11} \alpha_{12} \alpha_{22} \theta_{1}
+ e_{2}\otimes \alpha_{12} \alpha_{21} \alpha_{22} \theta_{1}} &
\mathcell{e_{1}\otimes \alpha_{11} \alpha_{12} 
-e_{1}\otimes \alpha_{11} \alpha_{22} 
-e_{1}\otimes \alpha_{11} \theta_{2} 
+ e_{2}\otimes \alpha_{22} \theta_{1}} \\
\hline

27 &
\mathcell{e_{1}\otimes \alpha_{11} \alpha_{12} \alpha_{22} \theta_{2}} &
\mathcell{e_{1}\otimes \alpha_{11} \theta_{1} -e_{2}\otimes \alpha_{21} \theta_{1}} \\
\hline

28 &
\mathcell{e_{1}\otimes \alpha_{11} \alpha_{12} \theta_{1} \theta_{2}
-e_{2}\otimes \alpha_{11} \alpha_{22} \theta_{1} \theta_{2} \\
-e_{2}\otimes \alpha_{12} \alpha_{21} \alpha_{22} \theta_{1}
+ e_{2}\otimes \alpha_{12} \alpha_{21} \theta_{1} \theta_{2}} &
\mathcell{-e_{1}\otimes \alpha_{11} \alpha_{22}} \\
\hline

29 &
\mathcell{e_{1}\otimes \alpha_{11} \alpha_{21} \alpha_{22} \theta_{1}} &
\mathcell{-e_{2}\otimes \alpha_{12} \theta_{2}} \\
\hline

30 &
\mathcell{e_{1}\otimes \alpha_{11} \alpha_{21} \alpha_{22} \theta_{2}} &
\mathcell{e_{2}\otimes \alpha_{11} \theta_{2} + e_{2}\otimes \alpha_{12} \theta_{1}} \\
\hline

31 &
\mathcell{e_{1}\otimes \alpha_{11} \alpha_{22} \theta_{1} \theta_{2}
-e_{1}\otimes \alpha_{12} \alpha_{21} \theta_{1} \theta_{2} \\
-e_{2}\otimes \alpha_{12} \alpha_{21} \alpha_{22} \theta_{1}
-e_{2}\otimes \alpha_{21} \alpha_{22} \theta_{1} \theta_{2}} &
\mathcell{e_{1}\otimes \alpha_{11} \alpha_{12}} \\
\hline

32 &
\mathcell{e_{1}\otimes \alpha_{12} \alpha_{21} \alpha_{22} \theta_{2}} &
\mathcell{-e_{2}\otimes \alpha_{11} \theta_{1}} \\
\hline

33 &
\mathcell{e_{1}\otimes \alpha_{21} \alpha_{22} \theta_{1} \theta_{2}} &
\mathcell{e_{2}\otimes \alpha_{11} \alpha_{12}} \\
\hline

34 &
\mathcell{e_{1}\otimes \alpha_{11} \alpha_{12} \alpha_{21} \theta_{1} \theta_{2}
+ e_{2}\otimes \alpha_{11} \alpha_{21} \alpha_{22} \theta_{1} \theta_{2}} &
\mathcell{-e_{1}\otimes \alpha_{12}} \\
\hline

35 &
\mathcell{e_{1}\otimes \alpha_{11} \alpha_{12} \alpha_{22} \theta_{1} \theta_{2}
+ e_{2}\otimes \alpha_{12} \alpha_{21} \alpha_{22} \theta_{1} \theta_{2}} &
\mathcell{e_{1}\otimes \alpha_{11}} \\
\hline

36 &
\mathcell{e_{1}\otimes \alpha_{11} \alpha_{21} \alpha_{22} \theta_{1} \theta_{2}} &
\mathcell{e_{2}\otimes \alpha_{12}} \\
\hline

37 &
\mathcell{e_{1}\otimes \alpha_{12} \alpha_{21} \alpha_{22} \theta_{1} \theta_{2}} &
\mathcell{-e_{2}\otimes \alpha_{11}} \\
\hline

38 &
\mathcell{e_{1}\otimes \alpha_{11} \alpha_{12} \alpha_{21} \alpha_{22} \theta_{1} \theta_{2}} &
\mathcell{e_{2}\otimes 1} \\
\hline

39 &
\mathcell{e_{2}\otimes 1} &
\mathcell{-e_{1}\otimes \alpha_{11} \alpha_{12} \alpha_{21} \alpha_{22} \theta_{1} \theta_{2}} \\
\hline

40 &
\mathcell{e_{2}\otimes \alpha_{11}} &
\mathcell{-e_{1}\otimes \alpha_{12} \alpha_{21} \alpha_{22} \theta_{1} \theta_{2}} \\
\hline

41 &
\mathcell{e_{2}\otimes \alpha_{12}} &
\mathcell{e_{1}\otimes \alpha_{11} \alpha_{21} \alpha_{22} \theta_{1} \theta_{2}} \\
\hline

42 &
\mathcell{e_{2}\otimes \alpha_{11} \alpha_{12}} &
\mathcell{-e_{1}\otimes \alpha_{21} \alpha_{22} \theta_{1} \theta_{2}} \\
\hline

43 &
\mathcell{e_{2}\otimes \alpha_{11} \theta_{1}} &
\mathcell{e_{1}\otimes \alpha_{12} \alpha_{21} \alpha_{22} \theta_{2}} \\
\hline

44 &
\mathcell{e_{2}\otimes \alpha_{11} \theta_{2} + e_{2}\otimes \alpha_{12} \theta_{1}} &
\mathcell{-e_{1}\otimes \alpha_{11} \alpha_{21} \alpha_{22} \theta_{2}} \\
\hline

45 &
\mathcell{e_{2}\otimes \alpha_{12} \theta_{2}} &
\mathcell{e_{1}\otimes \alpha_{11} \alpha_{21} \alpha_{22} \theta_{1}} \\
\hline

46 &
\mathcell{e_{2}\otimes \alpha_{11} \alpha_{12} \theta_{1}} &
\mathcell{-e_{1}\otimes \alpha_{21} \alpha_{22} \theta_{2}} \\
\hline

47 &
\mathcell{e_{2}\otimes \alpha_{11} \alpha_{12} \theta_{2}} &
\mathcell{e_{1}\otimes \alpha_{21} \alpha_{22} \theta_{1}} \\
\hline

48 &
\mathcell{e_{2}\otimes \alpha_{11} \alpha_{21} \theta_{1}} &
\mathcell{e_{1}\otimes \alpha_{12} \alpha_{22} \theta_{2}} \\
\hline

49 &
\mathcell{e_{2}\otimes \alpha_{11} \alpha_{21} \theta_{2}
+ e_{2}\otimes \alpha_{12} \alpha_{21} \theta_{1}} &
\mathcell{-e_{1}\otimes \alpha_{11} \alpha_{22} \theta_{2}
-e_{1}\otimes \alpha_{12} \alpha_{21} \theta_{2} 
-e_{2}\otimes \alpha_{21} \alpha_{22} \theta_{2}} \\
\hline

50 &
\mathcell{e_{2}\otimes \alpha_{11} \alpha_{22} \theta_{2}
+ e_{2}\otimes \alpha_{12} \alpha_{22} \theta_{1}} &
\mathcell{e_{1}\otimes \alpha_{11} \alpha_{21} \theta_{2}
+ e_{2}\otimes \alpha_{21} \alpha_{22} \theta_{1}} \\
\hline

51 &
\mathcell{e_{2}\otimes \alpha_{12} \alpha_{22} \theta_{2}} &
\mathcell{-e_{1}\otimes \alpha_{11} \alpha_{21} \theta_{1}} \\
\hline

52 &
\mathcell{e_{2}\otimes \alpha_{11} \alpha_{12} \alpha_{21} \theta_{1}} &
\mathcell{e_{1}\otimes \alpha_{22} \theta_{2}} \\
\hline

53 &
\mathcell{e_{2}\otimes \alpha_{11} \alpha_{12} \alpha_{21} \theta_{2}} &
\mathcell{-e_{1}\otimes \alpha_{21} \theta_{2}
-e_{1}\otimes \alpha_{22} \theta_{1}} \\
\hline

54 &
\mathcell{e_{2}\otimes \alpha_{11} \alpha_{12} \alpha_{22} \theta_{2}} &
\mathcell{e_{1}\otimes \alpha_{21} \theta_{1}} \\
\hline

55 &
\mathcell{e_{2}\otimes \alpha_{11} \alpha_{12} \theta_{1} \theta_{2}} &
\mathcell{-e_{1}\otimes \alpha_{21} \alpha_{22}} \\
\hline

56 &
\mathcell{e_{2}\otimes \alpha_{11} \alpha_{12} \alpha_{21} \theta_{1} \theta_{2}} &
\mathcell{-e_{1}\otimes \alpha_{22}} \\
\hline

57 &
\mathcell{e_{2}\otimes \alpha_{11} \alpha_{12} \alpha_{22} \theta_{1} \theta_{2}} &
\mathcell{e_{1}\otimes \alpha_{21}} \\
\hline

58 &
\mathcell{e_{2}\otimes \alpha_{11} \alpha_{12} \alpha_{21} \alpha_{22} \theta_{1} \theta_{2}} &
\mathcell{-e_{1}\otimes 1} \\
\hline

\end{longtable}

\endgroup

\bibliography{bibliosuper}

@misc{Noja2026poincaredualitysupergravity,
      title={Poincar\'e Duality and Supergravity}, 
      author={Konstantin Eder and John Huerta and Simone Noja},
      year={2026},
      eprint={2312.05224},
      archivePrefix={arXiv},
      primaryClass={math-ph},
      note ={\arxiv{2312.05224v4}}
}

@book {Shabatcomplexsev92,
    AUTHOR = {Shabat, B. V.},
     TITLE = {Introduction to complex analysis. {P}art {II}},
    SERIES = {Translations of Mathematical Monographs},
    VOLUME = {110},
   EDITION = {Russian},
      NOTE = {Functions of several variables},
 PUBLISHER = {American Mathematical Society, Providence, RI},
      YEAR = {1992},
     PAGES = {x+371},
      ISBN = {0-8218-4611-6},
   MRCLASS = {32-01},
  MRNUMBER = {1192135},
       DOI = {10.1090/mmono/110},
       URL = {https://doi.org/10.1090/mmono/110},
}

@book {GrauertSteinspacebook79,
    AUTHOR = {Grauert, Hans and Remmert, Reinhold},
     TITLE = {Theory of {S}tein spaces},
    SERIES = {Grundlehren der Mathematischen Wissenschaften},
    VOLUME = {236},
      NOTE = {Translated from the German by Alan Huckleberry},
 PUBLISHER = {Springer-Verlag, Berlin-New York},
      YEAR = {1979},
     PAGES = {xxi+249},
      ISBN = {3-540-90388-7},
   MRCLASS = {32-01},
  MRNUMBER = {580152},
}

@book {EnumComb2,
    AUTHOR = {Stanley, Richard P.},
     TITLE = {Enumerative combinatorics. {V}ol. 2},
    SERIES = {Cambridge Studies in Advanced Mathematics},
    VOLUME = {208},
   EDITION = {Second},
      NOTE = {With an appendix by Sergey Fomin},
 PUBLISHER = {Cambridge University Press, Cambridge},
      YEAR = {2024},
     PAGES = {xvi+783},
      ISBN = {978-1-009-26249-1; 978-1-009-26248-4},
   MRCLASS = {05-02 (05A15 05E05 05E10 68R05)},
  MRNUMBER = {4621625},
MRREVIEWER = {Timothy\ Y.\ Chow},
}

@book {Humphreys72Liealgbook,
    AUTHOR = {Humphreys, James E.},
     TITLE = {Introduction to {L}ie algebras and representation theory},
    SERIES = {Graduate Texts in Mathematics},
    VOLUME = {Vol. 9},
 PUBLISHER = {Springer-Verlag, New York-Berlin},
      YEAR = {1972},
     PAGES = {xii+169},
   MRCLASS = {17BXX},
  MRNUMBER = {323842},
MRREVIEWER = {F.\ W.\ Lemire},
}

@article {bwfact,
    AUTHOR = {Sam, Steven V.},
     TITLE = {Borel-{W}eil factorization for super {G}rassmannians},
   JOURNAL = {Selecta Math. (N.S.)},
  FJOURNAL = {Selecta Mathematica. New Series},
    VOLUME = {32},
      YEAR = {2026},
    NUMBER = {2},
     PAGES = {Paper No. 24, 45},
      ISSN = {1022-1824,1420-9020},
   MRCLASS = {14M30 (13D02)},
  MRNUMBER = {5038816},
       DOI = {10.1007/s00029-026-01131-3},
       URL = {https://doi.org/10.1007/s00029-026-01131-3},
}

@article{superres2,
    AUTHOR = {Sam, Steven V and Snowden, Andrew},
     TITLE = {Cohomology of flag supervarieties and resolutions of
              determinantal ideals. {I}{I}},
    note={\arxiv{2412.20797v1}},
    year={2024},
}

@article{kostantrho,
AUTHOR={Sam, Steven V and VandeBogert, Keller and Weyman, Jerzy},
TITLE ={Kostant $\rho$-decomposition of homology {I}. Finite-dimensional representations},
note={\arxiv{2510.01343v1}},
year={2025},
}

@article{expos,
AUTHOR={Sam, Steven V and Snowden, Andrew},
TITLE={Introduction to twisted commutative algebras},
note={\arxiv{1209.5122v1}},
year={2012},
}

@article {superres,
    AUTHOR = {Sam, Steven V and Snowden, Andrew},
     TITLE = {Cohomology of flag supervarieties and resolutions of
              determinantal ideals},
   JOURNAL = {Algebr. Geom.},
  FJOURNAL = {Algebraic Geometry},
    VOLUME = {11},
      YEAR = {2024},
    NUMBER = {1},
     PAGES = {37--70},
      ISSN = {2313-1691,2214-2584},
   MRCLASS = {14M30 (13D02)},
  MRNUMBER = {4680013},
MRREVIEWER = {Anargyros\ Fellouris},
       DOI = {10.14231/ag-2024-002},
       URL = {https://doi.org/10.14231/ag-2024-002},
}

@book {weyman,
    AUTHOR = {Weyman, Jerzy},
     TITLE = {Cohomology of vector bundles and syzygies},
    SERIES = {Cambridge Tracts in Mathematics},
    VOLUME = {149},
 PUBLISHER = {Cambridge University Press, Cambridge},
      YEAR = {2003},
     PAGES = {xiv+371},
      ISBN = {0-521-62197-6},
   MRCLASS = {13D02 (13C40 14L30 14M12 14M15 14M17 20G15)},
  MRNUMBER = {1988690},
MRREVIEWER = {Laurent\ Manivel},
       DOI = {10.1017/CBO9780511546556},
       URL = {https://doi.org/10.1017/CBO9780511546556},
}

@book {FultonHarrisreptheory1991,
    AUTHOR = {Fulton, William and Harris, Joe},
     TITLE = {Representation theory},
    SERIES = {Graduate Texts in Mathematics},
    VOLUME = {129},
      NOTE = {A first course,
              Readings in Mathematics},
 PUBLISHER = {Springer-Verlag, New York},
      YEAR = {1991},
     PAGES = {xvi+551},
      ISBN = {0-387-97527-6; 0-387-97495-4},
   MRCLASS = {20G05 (17B10 20G20 22E46)},
  MRNUMBER = {1153249},
MRREVIEWER = {James\ E.\ Humphreys},
       DOI = {10.1007/978-1-4612-0979-9},
       URL = {https://doi.org/10.1007/978-1-4612-0979-9},
}

@book {GrifiththsHarrisAlgebraicgeo1978,
    AUTHOR = {Griffiths, Phillip and Harris, Joseph},
     TITLE = {Principles of algebraic geometry},
    SERIES = {Pure and Applied Mathematics},
 PUBLISHER = {Wiley-Interscience [John Wiley \& Sons], New York},
      YEAR = {1978},
     PAGES = {xii+813},
      ISBN = {0-471-32792-1},
   MRCLASS = {14-01},
  MRNUMBER = {507725},
MRREVIEWER = {Gerhard\ Pfister},
}

@article {BergveltRabin1999,
    AUTHOR = {Bergvelt, M. J. and Rabin, J. M.},
     TITLE = {Supercurves, their {J}acobians, and super {KP} equations},
   JOURNAL = {Duke Math. J.},
  FJOURNAL = {Duke Mathematical Journal},
    VOLUME = {98},
      YEAR = {1999},
    NUMBER = {1},
     PAGES = {1--57},
      ISSN = {0012-7094,1547-7398},
   MRCLASS = {14H70 (14M30 32C11 37K20 58A50)},
  MRNUMBER = {1683200},
MRREVIEWER = {Daniel\ Hern\'andez Ruip\'erez},
       DOI = {10.1215/S0012-7094-99-09801-0},
       URL = {https://doi.org/10.1215/S0012-7094-99-09801-0},
}

@misc{helein2020introductionsupermanifoldssupersymmetry,
      title={An introduction to supermanifolds and supersymmetry}, 
      author={Frederic Helein},
      year={2020},
      eprint={2006.01870},
      archivePrefix={arXiv},
      primaryClass={math-ph},
      note={\arxiv{2006.01870v1}}, 
}

@book {ManinGraugefieldtheorycomplexgeo,
    AUTHOR = {Manin, Yuri I.},
     TITLE = {Gauge field theory and complex geometry},
    SERIES = {Grundlehren der mathematischen Wissenschaften [Fundamental
              Principles of Mathematical Sciences]},
    VOLUME = {289},
   EDITION = {Second},
      NOTE = {Translated from the 1984 Russian original by N. Koblitz and J.
              R. King,
              With an appendix by Sergei Merkulov},
 PUBLISHER = {Springer-Verlag, Berlin},
      YEAR = {1997},
     PAGES = {xii+346},
      ISBN = {3-540-61378-1},
   MRCLASS = {32-02 (14M30 32C11 32L25 58A50 81-02)},
  MRNUMBER = {1632008},
MRREVIEWER = {Daniel\ Hern\'andez Ruip\'erez},
       DOI = {10.1007/978-3-662-07386-5},
       URL = {https://doi.org/10.1007/978-3-662-07386-5},
}

@article {Witten2019supmanintegration,
    AUTHOR = {Witten, Edward},
     TITLE = {Notes on supermanifolds and integration},
   JOURNAL = {Pure Appl. Math. Q.},
  FJOURNAL = {Pure and Applied Mathematics Quarterly},
    VOLUME = {15},
      YEAR = {2019},
    NUMBER = {1},
     PAGES = {3--56},
      ISSN = {1558-8599,1558-8602},
   MRCLASS = {58A50 (58C50 81T30)},
  MRNUMBER = {3946082},
MRREVIEWER = {Valentin\ Gabriel\ Cristea},
       DOI = {10.4310/PAMQ.2019.v15.n1.a1},
       URL = {https://doi.org/10.4310/PAMQ.2019.v15.n1.a1},
}

@book {Mumfordabelianvarieties1970,
    AUTHOR = {Mumford, David},
     TITLE = {Abelian varieties},
    SERIES = {Tata Institute of Fundamental Research Studies in Mathematics},
    VOLUME = {5},
 PUBLISHER = {Tata Institute of Fundamental Research, Bombay; by Oxford
              University Press, London},
      YEAR = {1970},
     PAGES = {viii+242},
   MRCLASS = {14.51},
  MRNUMBER = {282985},
MRREVIEWER = {James\ Milne},
}

@book {Weibel1994,
    AUTHOR = {Weibel, Charles A.},
     TITLE = {An introduction to homological algebra},
    SERIES = {Cambridge Studies in Advanced Mathematics},
    VOLUME = {38},
 PUBLISHER = {Cambridge University Press, Cambridge},
      YEAR = {1994},
     PAGES = {xiv+450},
      ISBN = {0-521-43500-5; 0-521-55987-1},
   MRCLASS = {18-01 (16-01 17-01 20-01 55Uxx)},
  MRNUMBER = {1269324},
MRREVIEWER = {Kenneth\ A.\ Brown},
       DOI = {10.1017/CBO9781139644136},
       URL = {https://doi.org/10.1017/CBO9781139644136},
}

@article{RabinRhoadesKim2023,
title = {The combinatorics of supertorus sheaf cohomology},
journal = {Journal of Geometry and Physics},
volume = {193},
pages = {104963},
year = {2023},
issn = {0393-0440},
doi = {https://doi.org/10.1016/j.geomphys.2023.104963},
url = {https://www.sciencedirect.com/science/article/pii/S0393044023002152},
author = {Jesse Kim and Jeffrey M. Rabin and Brendon Rhoades}
}

@article {HW87,
    AUTHOR = {Haske, Carl and Wells, Jr., R. O.},
     TITLE = {Serre duality on complex supermanifolds},
   JOURNAL = {Duke Math. J.},
  FJOURNAL = {Duke Mathematical Journal},
    VOLUME = {54},
      YEAR = {1987},
    NUMBER = {2},
     PAGES = {493--500},
      ISSN = {0012-7094,1547-7398},
   MRCLASS = {32L10 (32C37)},
  MRNUMBER = {899403},
MRREVIEWER = {Michael\ G.\ Eastwood},
       DOI = {10.1215/S0012-7094-87-05421-4},
       URL = {https://doi.org/10.1215/S0012-7094-87-05421-4},
}

@article {NojaForms,
    AUTHOR = {Noja, Simone},
     TITLE = {On the geometry of forms on supermanifolds},
   JOURNAL = {Differential Geom. Appl.},
  FJOURNAL = {Differential Geometry and its Applications},
    VOLUME = {88},
      YEAR = {2023},
     PAGES = {Paper No. 101999, 71},
      ISSN = {0926-2245,1872-6984},
   MRCLASS = {58A50 (14F40 32C11)},
  MRNUMBER = {4562762},
       DOI = {10.1016/j.difgeo.2023.101999},
       URL = {https://doi.org/10.1016/j.difgeo.2023.101999},
}

@book {HormanderSCV,
    AUTHOR = {H\"ormander, Lars},
     TITLE = {An introduction to complex analysis in several variables},
    SERIES = {North-Holland Mathematical Library},
    VOLUME = {7},
   EDITION = {Third},
 PUBLISHER = {North-Holland Publishing Co., Amsterdam},
      YEAR = {1990},
     PAGES = {xii+254},
      ISBN = {0-444-88446-7},
   MRCLASS = {32-01 (35N15)},
  MRNUMBER = {1045639},
}

@book {CohofNumberFields,
    AUTHOR = {Neukirch, J\"urgen and Schmidt, Alexander and Wingberg, Kay},
     TITLE = {Cohomology of number fields},
    SERIES = {Grundlehren der mathematischen Wissenschaften [Fundamental
              Principles of Mathematical Sciences]},
    VOLUME = {323},
 PUBLISHER = {Springer-Verlag, Berlin},
      YEAR = {2000},
     PAGES = {xvi+699},
      ISBN = {3-540-66671-0},
   MRCLASS = {11R34 (11-02 11G45 11R23 11S20 11S25 11S31 12G05)},
  MRNUMBER = {1737196},
MRREVIEWER = {Gabriel\ D.\ Villa-Salvador},
}

@book {MacLaneHomology,
    AUTHOR = {Mac Lane, Saunders},
     TITLE = {Homology},
    SERIES = {Classics in Mathematics},
      NOTE = {Reprint of the 1975 edition},
 PUBLISHER = {Springer-Verlag, Berlin},
      YEAR = {1995},
     PAGES = {x+422},
      ISBN = {3-540-58662-8},
   MRCLASS = {18-02 (18Axx 18Cxx 18Gxx)},
  MRNUMBER = {1344215},
}

@article {Rabin1995,
    AUTHOR = {Rabin, Jeffrey M.},
     TITLE = {Super elliptic curves},
   JOURNAL = {J. Geom. Phys.},
  FJOURNAL = {Journal of Geometry and Physics},
    VOLUME = {15},
      YEAR = {1995},
    NUMBER = {3},
     PAGES = {252--280},
      ISSN = {0393-0440,1879-1662},
   MRCLASS = {14M30 (14H52 32C11 58A50 58F07)},
  MRNUMBER = {1316333},
MRREVIEWER = {Alice\ Rogers},
       DOI = {10.1016/0393-0440(94)00012-S},
       URL = {https://doi.org/10.1016/0393-0440(94)00012-S},
}

@article {quotient-supermanifold,
    AUTHOR = {Bartocci, Claudio and Bruzzo, Ugo and Hern\'andez Ruip\'erez,
              Daniel and Pestov, Vladimir},
     TITLE = {Quotient supermanifolds},
   JOURNAL = {Bull. Austral. Math. Soc.},
  FJOURNAL = {Bulletin of the Australian Mathematical Society},
    VOLUME = {58},
      YEAR = {1998},
    NUMBER = {1},
     PAGES = {107--120},
      ISSN = {0004-9727},
   MRCLASS = {58A50},
  MRNUMBER = {1633768},
MRREVIEWER = {Fausto\ Ongay-Larios},
       DOI = {10.1017/S0004972700032044},
       URL = {https://doi.org/10.1017/S0004972700032044},
}

@inproceedings{Noja-Thesis,
  title={TOPICS IN ALGEBRAIC SUPERGEOMETRY OVER PROJECTIVE SPACES},
  author={S. Noja},
  year={2018},
  note={\url{https://api.semanticscholar.org/CorpusID:181539732}}
}

@article {MR2667819,
    AUTHOR = {Alldridge, Alexander and Hilgert, Joachim},
     TITLE = {Invariant {B}erezin integration on homogeneous supermanifolds},
   JOURNAL = {J. Lie Theory},
  FJOURNAL = {Journal of Lie Theory},
    VOLUME = {20},
      YEAR = {2010},
    NUMBER = {1},
     PAGES = {65--91},
      ISSN = {0949-5932},
   MRCLASS = {58C50 (58A50)},
  MRNUMBER = {2667819},
MRREVIEWER = {Frank\ Klinker},
}

@article {MR4544562,
    AUTHOR = {Bruzzo, Ugo and Hern\'andez Ruip\'erez, Daniel and Polishchuk,
              Alexander},
     TITLE = {Notes on fundamental algebraic supergeometry. {H}ilbert and
              {P}icard superschemes},
   JOURNAL = {Adv. Math.},
  FJOURNAL = {Advances in Mathematics},
    VOLUME = {415},
      YEAR = {2023},
     PAGES = {Paper No. 108890, 115},
      ISSN = {0001-8708,1090-2082},
   MRCLASS = {14M30 (14D22 14H10 14K10 83E30)},
  MRNUMBER = {4544562},
MRREVIEWER = {Frans\ Cantrijn},
       DOI = {10.1016/j.aim.2023.108890},
       URL = {https://doi.org/10.1016/j.aim.2023.108890},
}

@article {abhik,
AUTHOR={Pal, Abhik},
TITLE={Cohomology of flag supervarieties and syzygies of compositional varieties},
STATUS={to appear},
YEAR={2026},
}
\end{document}